\newtheorem{theo}{{Th\'eor\`eme}}[section]
\newtheorem{coro}[theo]{{Corollaire}}
\newtheorem{lemma}[theo]{{Lemme}}
\newtheorem{prop}[theo]{Proposition}
\theoremstyle{remark}
\newtheorem{remark}[theo]{\textbf{Remarque}}
\theoremstyle{definition}
\newtheorem{defn}[theo]{D\'efinition}
\newcommand{\ra}{\rightarrow}
\newcommand{\ol}{\overline}
\newcommand{\immouv}[1][r]
   {\ar@{}[#1] |*[o][F]{\hbox{%
         \vrule width 1.5mm height 0pt depth 0pt%
         \vrule width 0pt height .75mm depth .75mm%
         }}
         \ar@{^{(}->}[#1]}
\newcommand{\cA}{\mathcal{A}}
\newcommand{\cC}{\mathcal{C}}
\newcommand{\cE}{\mathcal{E}}
\newcommand{\cF}{\mathcal{F}}
\newcommand{\cG}{\mathcal{G}}
\newcommand{\cH}{\mathcal{H}}
\newcommand{\cM}{\mathcal{M}}
\newcommand{\cO}{\mathcal{O}}
\newcommand{\cP}{\mathcal{P}}
\newcommand{\cR}{\mathcal{R}}
\newcommand{\A}{\mathbb A}
\newcommand{\C}{\mathbb C}
\newcommand{\B}{\mathbb B}
\newcommand{\F}{\mathbb F}
\newcommand{\I}{\mathbb I}
\newcommand{\N}{\mathbb N}
\newcommand{\Q}{\mathbb Q}
\newcommand{\T}{\mathbb T}
\newcommand{\Z}{\mathbb Z}
\newcommand{\D}{\mathbf{D}}
\newcommand{\bM}{\mathbf M}
\newcommand{\fC}{\mathfrak C}
\newcommand{\fD}{\mathfrak D}
\newcommand{\rH}{\mathrm H}
\newcommand{\m}{\mathrm m}
\newcommand{\sW}{\mathscr W}
\newcommand{\sV}{\mathscr V}
\newcommand{\sU}{\mathscr U}
\newcommand{\sX}{\mathscr X}
\DeclareMathOperator{\im}{\mathrm Im}
\newcommand{\Ev}{\mathbf Ev}
\newcommand{\Dir}{\mathbf{Dir}}
\DeclareMathOperator{\Hom}{\mathrm Hom}
\DeclareMathOperator{\cor}{\mathrm{cor}}
\DeclareMathOperator{\GL}{\mathrm GL}
\DeclareMathOperator{\SL}{\mathrm SL}
\DeclareMathOperator{\LC}{\mathrm LC}
\DeclareMathOperator{\LA}{\mathrm LA}
\DeclareMathOperator{\End}{\mathrm End}
\DeclareMathOperator{\Fil}{\mathrm{Fil}}
\DeclareMathOperator{\Frob}{\mathrm Frob}
\DeclareMathOperator{\Gal}{\mathrm Gal}
\DeclareMathOperator{\rank}{\mathrm rank}
\DeclareMathOperator{\Spm}{\mathrm{Spm}}
\DeclareMathOperator{\Sym}{\mathrm{Sym}}
\DeclareMathOperator{\tr}{\mathrm Tr}
\DeclareMathOperator{\plim}{\varprojlim}
\newcommand{\alg}{\mathrm{alg}}
\newcommand{\con}{\mathrm{cong}}
\newcommand{\cont}{\mathrm{cont}}
\newcommand{\cycl}{\mathrm{cycl}}
\newcommand{\dR}{\mathrm{dR}}
\newcommand{\cris}{\mathrm{cris}}
\newcommand{\rig}{\mathrm{rig}}
\newcommand{\Iw}{\mathrm{Iw}}
\newcommand{\Exp}{\mathrm{Exp}}
\newcommand{\z}{\zeta}
\newcommand{\G}{\Gamma}
\newcommand{\eps}{\epsilon}
\DeclareMathOperator{\Kato}{\mathrm{Kato}}
\DeclareMathOperator{\univ}{\mathrm{uinv}}
\begin{document}
\title{Le système d'Euler de Kato en famille (II)\footnote{2010 Mathematics Subject Classification. 11F85, 11F67, 11G40, 11R33, 11S80, 14G10, 14G35}}
\author{ Shanwen \textsc{WANG}
\footnotemark}
\date{}
\maketitle {\renewcommand{\thefootnote}{\fnsymbol{footnote}}
\footnotetext[1]{Email
: \textsf{wetiron1984@gmail.com}}}%

\def\abstractname{Résumé}

\begin{abstract}
Ce texte est le deuxième article sur une généralisation de système d'Euler de Kato. Il est consacré à la construction d'une famille de systèmes d'Euler de Kato sur la courbe de Hecke, qui interpole les systèmes d'Euler de Kato associés aux formes modulaires paramétrées par la courbe de Hecke cuspidale.
Par ailleurs, on explique la construction d'une famille de distributions sur $\Z_p$ sur la courbe de Hecke cuspidale à partir de cette famille de systèmes d'Euler de Kato; cette distribution fournit une fonction L $p$-adique en $2$-variable qui interpole les fonctions L $p$-adiques des formes modulaires précédentes.
\end{abstract}
\renewcommand{\abstractname}{Abstract}
\begin{abstract}
This article is the second article on the generalization of Kato's Euler system. The main subject of this article is to construct a family of Kato's Euler systems over the cuspidal eigencurve, which interpolate the Kato's Euler systems associated to the modular forms parametrized by the cuspidal eigencurve. We also explain how to use this family of Kato's Euler system to construct a family of distributions on $\Z_p$ over the cuspidal eigencurve; this distribution gives us a two variable $p$-adic L function which interpolate the $p$-adic L function of modular forms. 
\end{abstract}

\tableofcontents
\section{Introduction }
\subsection{Introduction}
Dans une séries d'articles datant des années $80$, Hida montre que les formes modulaires ordinaires vivent dans des familles $p$-adiques et le poids varie $p$-adiquement. En $1995$, Coleman montre que la même chose est vraie pour les formes modulaires surconvergentes non-ordinaires de pente finie. Ensuite, Coleman et Mazur \cite{CM} construisent un objet géométrique $\fC$, appelé la courbe de Hecke ("Eigencurve"), paramétrant les formes modulaires surconvergentes de pente finie. 
De plus, ils ont aussi construit une famille de représentations galoisiennes de rang $2$ sur la courbe de Hecke. On note $\fC^0$ la sous-courbe fermée de $\fC$, appelé la courbe de Hecke cuspidale, paramétrant les formes modulaires surconvergentes cuspidales de pente finie, ainsi que $\tilde{\fC}^0$ la normalisation de $\fC^0$.
Notre résultat principal (cf. théorème \ref{principal} ci-dessous) est que la fonction L $p$-adique d'une forme modulaire $f$ varie analytiquement avec $f$ sur $\tilde{\fC}^0$:

On choisit un caractère de Dirichlet $\chi$ modulo $N$ avec $N$ suffisantment grand\footnote{C'est une condition technique (cf. \S5.2.2 pour plus de détails) pour que l'on peut fixer les périodes en utilisant seulement un caractère $\chi$. } et $(N,p)=1$, ce qui permet de fixer les périodes par lesquelles on doit diviser les valeurs spéciales des fonctions L  que l'on veut interpoler (cf. \S \ref{cal}  pour détails). Si $f\in \fC^0$ est une forme propre classique non-critique de niveau modéré $\Gamma_1(N)$, on dispose d'une distribution $\mu_{f,\chi}$ sur $\Z_p^*$ à valeurs dans $\ol{\Q}_p$, telle que, quels que soient $0\leq j\leq k-2 $ et $\eta$ un caractère de Dirichlet modulo $p^m$ vérifiant que $\eta\chi(-1)=(-1)^{k-j-1}$, on a $\int_{\Z_p^*}\eta(x) x^{j}\mu_{f,\chi}= L(f\otimes\eta, j+1)$ à multiplication près par des facteurs explicites (facteurs d'Euler, périodes,$\cdots$), et qui fournit la fonction L $p$-adique attachée à $f$, en posant\footnote{$\langle\cdot\rangle$ est l'application de projection $\Z_p^*\ra 1+p\Z_p$.} 
\[L_{p,\chi}(f,\kappa, s)=\int_{\Z_p^*}\kappa(x)\cdot \langle x\rangle^{s}\cdot \mu_{f,\chi},\] si $\kappa$ est un caractère localement analytique de $\Z_p^*$ et $s\in \Z_p$.
\begin{theo}\label{principal}Si $x$ est un point classique non-critique de $\tilde{\fC}^0$, alors il existe un ouvert affinoïde $X\subset \tilde{\fC}^0$ contenant $x$ et une distribution $\mu_{X,\chi} $ sur $\Z_p^*$ à valeurs dans $\cO(X)$, tels que, pour tout point $f$ dans l'intersection de $X$ et le sous-ensemble $Z$ des formes propres classiques non-critiques de $\tilde{\fC}^0$, on a \[\mathrm{Ev}_f(\mu_{X,\chi})= C(f)\mu_{f,\chi},\] où $C(f)$ est une constante dans $\bar{\Q}_p^*$ dépendant de la forme $f$.
\end{theo}   


\begin{remark}(1) Il y a au moins trois manières de construire $\mu_{f,\chi}$ correspondant aux différentes réalisations des motifs associés aux formes modulaires: 

$\bullet$ La méthode classique utilise la réalisation de Betti, c'est à dire, la théorie de symboles modulaires (Mazur-Swinnerton-Dyer \cite{MS}, Manin \cite{Ma}, Vishik \cite{Vi}, Amice-Vélu \cite{Av}, Mazur-Tate-Teiltelbaum \cite{MTT}, Stevens \cite{St}, Pollack-Stevens \cite{SP} et \cite{SP1}...); 

$\bullet$ Une méthode plus récente, correspondante à la réalisation de de Rham, passe par la méthode de Rankin-Selberg (Hida \cite{HD} et \cite{HD1} dans le cas ordinaire, Panchishkin \cite{AP} dans le cas général); 

$\bullet$ La méthode de Kato \cite{KK}, via la réalisation étale $p$-adique, passe par la construction d'un système d'Euler, utilise la théorie des $(\varphi,\Gamma)$-modules de Fontaine \cite{Fo} pour en déduire, via une variante de l'application exponentielle de Perrin-Riou \cite{PR} et \cite{PR1}, une distribution. Montrer que cette distribution est celle que l'on cherche (i.e. interpole les valeurs spéciales de la fonction L complexe de la forme modulaire) nécessite de comparer deux lois de réciprocités explicites, et d'utiliser la méthode de Rankin comme dans l'approche de Panchishkin. 


(2) Des fonctions L $p$-adiques en deux variables, dont une variable varie sur un morceau de $\tilde{\fC}^0$, ont déjà été construites par des méthodes différentes correspondant aux constructions de $\mu_{f,\chi}$ ci-dessus:

$\bullet$ La stratégie de Stevens \cite{St1} (travail non publié), Pollack-Stevens \cite{SP1} et Bellaïche \cite{Be} est d'utiliser la théorie des symboles modulaires surconvergents, et ils réussissent à construire une fonction L $p$-adique $L_p(x,s)$ en deux variables, où $x$ varie dans un voisinage d'une forme modulaire raffinée (non-critique pour Stevens, et critique pour Bellaïche) sur la courbe de Hecke;

$\bullet$ La stratégie de Hida \cite{HD1} (pour la famille ordinaire) et de Panchishkin \cite{AP} (pour la famille de pente finie fixée) est d'utiliser la méthode de Rankin-Selberg en famille; 

$\bullet$ La stratégie d'Emerton \cite{EM} est d'utiliser la cohomologie complété et le foncteur de Jacquet dans la théorie de représentations localement analytiques de $\GL_2(\Q_p)$;

$\bullet$ La stratégie de Fukaya \cite{Fu} et de Delbourgo \cite{DD}, dans le cas ordinaire, passe par la déformation de systèmes d'Euler de Kato via la $K$-théorie et via la théorie des symboles modulaires respectivement, utilise la série de Coleman pour $K_2$ et une grande exponentielle duale respectivement pour en déduire une fonction L $p$-adique sur la famille ordinaire. 

(3) Notre stratégie a pour point de départ les travaux de Kato \cite{KK} et de Colmez \cite{PC1} (revisités par l'auteur dans $\cite{Wang}$) sur le système d'Euler de Kato.

$\bullet$ Dans \cite{WangI}, pour $c,d\in \Z_p^*$, on a construit une déformation $z_{\Kato,c,d}(\nu_j)$ (cf. \S 3.3) du système d'Euler de Kato sur l'espace des poids en reprenant la construction de Kato et défini une famille d'applications exponentielles duales, qui interpole l'application exponentielle duale de Kato et qui envoie la famille de systèmes d'Euler de Kato sur le produit d'une famille de séries d'Eisenstein avec une série d'Eisenstein.

$\bullet$ Dans cet article, on construit une famille de systèmes d'Euler de Kato sur $\fC^0$ (cf. \S 4) à partir de $z_{\Kato,c,d}(\nu_j)$; ensuite on utilise la théorie des $(\varphi,\Gamma)$-modules en famille (\cite{BC}, \cite{KJX}, \cite{Liu}) pour en déduire une distribution $\mu_{X,c,d,\chi}$ sur $\Z_p$ à valeurs dans $\cO(X)$ (cf. proposition \ref{principal_vrai}), où $X\subset\tilde{\fC}^0$ un ouvert affinoïde comme dans le théorème. En divisant la restriction de $\mu_{X,c,d,\chi}$ à $\Z_p^*$ par un facteur explicit (cf. la formule (\ref{CD})), on obtient la distribution voulue, qui est indépendante du choix de $c,d$.

(4) Le théorème ci-dessus montre qu'il existe une fonction L $p$-adique en deux variables $L_p(x,s)$, où $x$ varie sur $\tilde{\fC}^0$, interpolant les fonctions L $p$-adiques de formes modulaires. Par prolongement analytique, on en déduit que le théorème \ref{principal} est encore valable aux points classiques cuspidals critiques.


\end{remark}

Le plan de cet article est le suivant: la démonstration comporte deux étapes principales mentionnées dans la remarque ci-dessus, qui correspondent aux chapitres $\S 4$ et $\S 5$. Ces deux étapes reposent sur deux chapitres de préparations (\S 2 et 3):  au chapitre \S 2, on rappelle la théorie des $(\varphi,\Gamma)$-modules et la triangulation en famille; au chapitre \S 3, on rappelle la construction de la famille de système d'Euler de Kato sur l'espace des poids et ses variantes, à qui on appliquera la projection du système d'Euler de Kato sur $\fC^0$ (cf. \S 4).\\

 \subsection*{Remerciements:} Ce travail repose sur les travaux d'Ash-Stevens, Berger-Colmez, Bellaïche, Chenevier, Colmez, Kato, Kedlaya-Pottharst-Xiao, et Liu. Je tiens à leur exprimer ma gratitude. Pendant la préparation de cet article, j'ai bénéficié de communications et discussions avec F. Andreatta, J. Bellaïche, D. Benois, P. Colmez, A. Iovita, R. Liu, G. Stevens, J. Tong et L. Xiao.  Je voudrais aussi remercier les Prof. Y. Tian et Prof. S. Zhang et le Morningside Center de Pékin, ainsi que les Prof. H. Chen et Prof. L. Fu et le CIM de Tianjin,  pour leur hospitalité; les exposés que j'ai donnés lors des conférences à ces deux endroits, en août 2012 et juin 2013, m'ont grandement aidé à mettre mes idées au claire.  Je remercie aussi le Cariparo Eccellenza Grant, le projet SFB 45 et le projet SFB 1085,  d'avoir financé mes séjours de 2011 à 2013 à Padoue, Italie, et de 2014 à Essen et à Regensburg, Allemagne. Les rédactions de cet article a été faite pendant mes séjours à l'IMJ et à l'IHES en 2013, et au CRM de montréal en 2015. Je souhaite remercier ces institutions pour m'avoir fourni d'excellentes conditions de travail.

\subsection{Notations}
On note $\overline\Q$ la cl\^oture alg\'ebrique de $\Q$ dans $\C$,
et \index{on} fixe, pour tout nombre premier~$p$, une cl\^oture
alg\'ebrique $\overline\Q_p$ de $\Q_p$, ainsi qu'un plongement de
$\overline\Q$ dans $\overline\Q_p$.

Si $N\in\N$, on note $\zeta_N$ la racine $N$-i\`eme
$e^{2i\pi/N}\in\overline\Q$ de l'unit\'e, et on note
$\Q^{\rm cycl}$ l'extension cyclotomique de $\Q$,
r\'eunion des $\Q(\zeta_N)$, pour $N\geq 1$, ainsi que $\Q^{\rm cycl}_p$ l'extension cyclotomique de $\Q_p$, r\'eunion de $\Q_p(\z_N)$, pour $N\geq 1$.
 \subsubsection*{Objets ad\'eliques}
 Soient $\cP$ l'ensemble des nombres premiers de $\Z$ et $\hat{\Z}$ le compl\'et\'e profini de $\Z$, alors
\index{qi si wo le} 
$\hat{\Z}=\prod_{p\in\cP}\Z_p$. Soit $\A_f=\Q\otimes\hat{\Z}$ l'anneau
des ad\`eles finis de $\Q$. Si
$x\in\A_f$, on note $x_p$ (resp. $x^{]p[}$) la
composante de $x$ en $p$ (resp. en dehors de $p$). Notons
$\hat{\Z}^{]p[}=\prod_{l\neq p}\Z_l$. On a donc
$\hat{\Z}=\Z_p\times\hat{\Z}^{]p[}$. Cela induit les d\'ecompositions
suivantes: pour tout  $d\geq 1$,
\[
\bM_d(\A_f)=\bM_d(\Q_p)\times\bM_d(\Q\otimes\hat{\Z}^{]p[})
\text{ et }
\GL_d(\A_f)=\GL_d(\Q_p)\times\GL_d(\Q\otimes\hat{\Z}^{]p[}).\]
On d\'efinit les sous-ensembles suivants de $\A_f$ et
$\bM_2(\A_f)$:
\begin{align*}\hat{\Z}^{(p)}=\Z_p^{*}\times\hat{\Z}^{]p[} &\text{ et
}
\bM_{2}(\hat{\Z})^{(p)}=\GL_2(\Z_p)\times\bM_2(\hat{\Z}^{]p[}), \\
\A_f^{(p)}=\Z_p^{*}\times(\Q\otimes\hat{\Z}^{]p[})
&\text{ et }
\bM_{2}(\A_f)^{(p)}=\GL_2(\Z_p)\times\bM_2(\Q\otimes\hat{\Z}^{]p[}).
\end{align*}
\subsubsection*{Actions de groupes}
Soient $X$ un espace topologique localement profini, et $V$ un
$\Z$-module. On note $\LC_c(X,V)$ le module des fonctions localement
constantes sur  $X$ \`a valeurs dans $V$ dont le support est compact
dans $X$. On note $\fD_{\alg}(X,V)$ l'ensemble des distributions alg\'ebriques sur
$X$ \`a valeurs dans $V$, c'est \`a dire des applications
$\Z$-lin\'eaires de $\LC_c(X,\Z)$ \`a valeurs dans $V$. On note $\int_X\phi\mu$ la valeur de $\mu$
sur $\phi$ o\`u $\mu\in\fD_{\alg}(X,V)$ et $\phi\in \LC_c(X,\Z)$.

Soit $G$ un groupe localement profini, agissant contin\^ument \`a
droite sur $X$ et $V$. On munit $\LC_c(X,\Z)$ et
$\fD_{\alg}(X,V)$ d'actions de $G$ \`a droite comme suit:

si $g\in G, x\in X,\phi\in\LC_c(X,\Z), \mu\in\fD_{\alg}(X,V),$ alors
\begin{equation}\label{actiondis} (\phi*g)(x)=\phi(x*g^{-1}) \text{ et } \int_{X}\phi(\mu*g)=\bigl(\int_{X}(\phi*g^{-1})\mu\bigr)*g.
\end{equation}
Si $M$ est un $G$-module topologique à droite, on note $\rH^i(G,M)$ le $i$-ième groupe de cohomologie continue de $G$ à valeurs dans $M$. Si $X$ est en plus muni d'une action à gauche de $G$ (notée $(g,x)\mapsto g\star x$) commutant à l'action à droite de $G$, les modules $\rH^i(G, \fD_{\alg}(X,M))$ sont naturellement des $G$-modules à gauche. 
\subsubsection*{Formes modulaires}
Soient $A$ un sous-anneau de $\C$ et $\Gamma$ un sous-groupe
d'indice fini de $\SL_2(\Z)$. On note $\cM_k(\Gamma,\C)$ le
$\C$-espace vectoriel des formes modulaires de poids $k$ pour
$\Gamma$. On note aussi $\cM_{k}(\Gamma,A)$ le sous $A$-module de
$\cM_k(\Gamma,\C)$ des formes modulaires dont le $q$-d\'eveloppement
est \`a coefficients dans $A$. On pose
$\cM(\Gamma,A)=\oplus_{k=0}^{+\infty}\cM_k(\Gamma,A)$. Et on note
$\cM_k(A)$ (resp. $\cM(A)$) la r\'eunion des $\cM_k(\Gamma,A)$
(resp. $\cM(\Gamma,A)$), o\`u $\Gamma$ d\'ecrit tous les
sous-groupes d'indice fini de $\SL_2(\Z)$. 

On d\'efinit de m\^eme:
\[\cM^{\con}_k(A)=\bigcup\limits_{\substack{\Gamma \text { sous-groupe de congruence } }}\cM_k(\Gamma,A)\text{ et }
\cM^{\con}(A)=\bigcup_k\cM_k^{\con}(A).\]

Soit $K$ un sous-corps de $\C$ et soit $\ol{K}$ la cl\^oture alg\'ebrique de $K$. On note $\Pi_K$ le groupe des
automorphismes de $K$-algèbres graduées $\cM(\bar{K})$ sur $\cM(\SL_2(\Z),K)$; c'est un
groupe profini.
Si $f\in\cM(\ol{K})$, le groupe de galois $\cG_K$ agit sur les coefficients du $q$-d\'eveloppement de $f$; ceci nous fournit une section de $\Pi_K\ra \cG_K$, notée par $\iota_K$. 

Le groupe des automorphismes de $\cM^{\con}(\Q^{\cycl})$ sur $\cM(\SL_2(\Z),\Q^{\cycl})$ est le groupe $\SL_2(\hat{\Z})$, le compl\'et\'e profini de $\SL_2(\Z)$ par rapport aux sous-groupes de congruence. D'autre part, soit $f\in\cM^{\con}(\Q^{\cycl})$, le groupe $\cG_{\Q}$ agit sur les coefficients du $q$-d\'eveloppement de $f$ \`a travers son quotient $\Gal(\Q^{\cycl}/\Q)$ qui est isomorphe \`a $\hat{\Z}^{*}$ par le caract\`ere cyclotomique $\chi_{\cycl}$. On note $H$ le groupe des automorphismes de $\cM^{\con}(\Q^{\cycl})$ sur $\cM(\SL_2(\Z),\Q)$.  La sous-alg\`ebre $\cM^{\con}(\Q^{\cycl})$ est stable par $\Pi_{\Q}$ qui agit \`a travers $H$.
Le groupe $H$ est isomorphe à $\GL_2(\hat{\Z})$ et on a le diagramme commutatif de groupes suivant (cf. par exemple \cite[théorème 2.2]{Wang}):
\begin{equation}\label{diagram} 
\xymatrix{
1\ar[r]&\Pi_{\bar{\Q}}\ar[r]\ar[d]&\Pi_{\Q}\ar[r]\ar[d]&\cG_{\Q}\ar[r]\ar[d]^{\chi_\cycl}\ar@{.>}@/^/[l]^{\iota_\Q}&1\\
1\ar[r]&\SL_{2}(\hat{\Z})\ar[r]&\GL_2(\hat{\Z})\ar[r]^{\det}&\hat{\Z}^{*}\ar[r]\ar@{.>}@/^/[l]^{\iota}&1                     },
\end{equation}
o\`u la section $\iota_\Q$ de $\cG_{\Q}$ dans $\Pi_{\Q}$ d\'ecrite plus haut envoie $u\in \hat{\Z}^{*}$ sur la matrice $(\begin{smallmatrix}1&0\\0&u\end{smallmatrix})\in \GL_2(\hat{\Z})$.

\subsubsection*{Anneaux de séries de Laurent}

Fixons une extension finie $L$ de $\Q_p$. Le caractère cyclotomique $\chi_{\cycl}$ induit un isomorphisme de $\Gamma=\Gal(\Q_p(\z_{p^\infty})/\Q_p)$ sur $\Z_p^*$. Soient $\cR^{+}$ l'anneau des fonctions analytique sur le disque $v_p(T)>0$ à coefficient dans $L$, $\cE^+$ le sous-anneau de $\cR^+$ des éléments bornés, $\cR$ l'anneau des fonctions annalytiques sur une couronne  $0<v_p(T)\leq r$, o\`u $r>0$ d\'epend de l'\'el\'ement consid\'er\'e (l'anneau de Robba), $\cE^{\dag}$ le sous-anneau de $\cR$ des éléments bornés (c'est un corps) et $\cE$ le complété de $\cE^\dag$ pour la valuation $p$-adique. On munit ces anneaux d'actions continues de $\Gamma$ et d'un Frobenius $\varphi$, commutant entre elles, en posant $\varphi(T)=(1+T)^p-1$ et $\gamma(T)=(1+T)^{\chi_{\cycl}(\gamma)}-1$ si $\gamma\in \Gamma$.

Soit $C$ un pro-$p$-groupe qui est isomorphe à $1+p\Z_p$. Si $c$ est un générateur de $C$, l'algèbre de groupe complété $\Lambda_C$ de $C$ est isomorphe à $\Z_p[[c-1]]$. On définit l'anneau $\cR^+(C)$ en remplaçant par $c-1$ la variable $T$ intervenant dans la définition de $\cR^+$.
Si $C_n$ est le sous-groupe fermé de $C$ d'indice $p^n$, on a un isomorphisme $\Lambda_C\otimes_{\Lambda_{C_n}}\cR^+(C_n)\cong \cR^+(C)$. 

Soit $H$ un groupe isomorphe à $\Z_p^*$. On note $H_d$ le sous-groupe de $H$ correspondant à $1+p^d\Z_p$. On définit l'anneau $\cR^+(H)$ par le produit tensoriel
$\Lambda_H\otimes\cR^+(H_d)$, qui est indépendant du choix de $H_d$. 

\section{$(\varphi,\Gamma)$-modules et représentations galoisiennes}

\subsection{Raffinement et triangulation}
On dispose d'une équivalence de catégories (grâce à Fontaine, raffinée par Cherbonnier-Colmez \cite{CC}, Berger \cite{B} et Kedlaya \cite{Ke}) entres la catégories des $L$-représentations de $\cG_{\Q_p}$ et celle des $(\phi,\Gamma)$-modules étales sur $\cE$, $\cE^\dag$ et $\cR$ respectivement. Si $V$ est une $L$-représentation de $\cG_{\Q_p}$, on note $\D^\dag(V)$ et $\D_{\rig}(V)$ respectivement les $(\varphi,\Gamma)$-modules sur $\cE^\dag$ et $\cR$ associés à $V$.

\begin{defn} $(1)$ Un $(\varphi, \Gamma)$-module $D$ sur $\cR$ est dit triangulable si c'est une extension successive
de $(\varphi, \Gamma)$-modules de rang $1$ sur $\cR$, i.e. si $D$ possède une filtration croissante par des
sous-$(\varphi, \Gamma)$-modules $D_i$, pour $0 \leq i \leq d$, telle que l'on ait $D_0 = 0, D_d = D$ et $D_i/D_{i-1}$ est libre
de rang $1$ si $1\leq i \leq d$.\\
$(2)$ Soit $V$ une $L$-représentation de $\cG_{\Q_p}$.  On dit que $V$ est trianguline si $\D_{\rig}(V)$ est triangulable. 
\end{defn}

Soit $D= \D_{\rig}(V)$ le $(\varphi,\Gamma)$-module sur $\cR$ associé à une $L$-représentation $V$ de $\cG_{\Q_p}$. On pose $\D_{\cris}(V)=(D\otimes \cR[\frac{1}{t}])^{\Gamma}$, où $t=\log(1+T)$; c'est un $L$-espace vectoriel de dimension $\leq \rank_{\cR} D$ muni d'une action $L$-linéaire de $\varphi$ induite par celle sur $D$ et d'une filtration induite par celle sur $L_{\infty}((t))$ via l'application de localisation $\iota_\infty: \cR\ra L_{\infty}((t))$.  On dit que $V$ est une représentation cristalline si $\dim_L\D_{\cris}(V)=\rank_{\cR} D$. 
D'après Berger \cite{B}, cette définition coïncide avec la définition usuelle. 

Soit $V$ est une représentation cristalline de $\cG_{\Q_p}$ de dimension $2$. Si $\cF$ est un sous-espace propre de $\D_{\cris}(V)$ stable par $\varphi$, alors $D_{\cF}=\cR[\frac{1}{t}]\cF\cap \D_{\rig}(V)$ est un sous-$(\varphi,\Gamma)$-module de rang $1$ sur $\cR$, et $0\varsubsetneq D_\cF\varsubsetneq D$ est une triangulation de $D$. Ceci induit un cas particulier de\footnote{ La proposition \cite[\text{proposition} 2.4.1]{BeCh} décrit une telle bijection pour $V$ une représentation de dimension quelconque. } \cite[\text{proposition} 2.4.1]{BeCh}.
\begin{prop}\label{raff_BC}Il existe une bijection entre les sous-espaces propres stables par $\varphi$ de $\D_{\cris}(V)$ et les triangulations de $\D_{\rig}(V)$, dont l'inverse est donné par $\cF=(D_{\cF}[\frac{1}{t}])^{\Gamma}$. 
\end{prop}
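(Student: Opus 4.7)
Ma stratégie est de vérifier que les deux constructions énoncées sont mutuellement inverses, en utilisant le localisé $\cR[\frac{1}{t}]$ comme pont entre $\D_{\cris}(V)$ et $\D_{\rig}(V)$. Le sens direct envoie une droite propre $\cF$ vers $D_{\cF}:=\cR[\frac{1}{t}]\cF\cap D$, et le sens inverse envoie une triangulation $0\subset D_1\subset D$ vers $(D_1[\frac{1}{t}])^\Gamma\subset (D[\frac{1}{t}])^\Gamma=\D_{\cris}(V)$. Les trois étapes à contrôler sont: (i) chaque flèche prend ses valeurs dans l'ensemble voulu; (ii) elles sont inverses l'une de l'autre; (iii) la minoration de dimension du côté inverse, qui sera le point délicat, utilise la cristallinité de $V$.

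\textbf{Sens direct.} Fixons une droite propre $\cF=L\cdot e$ de $\varphi$ dans $\D_{\cris}(V)$. Comme $e\in (D[\frac{1}{t}])^\Gamma$ est un vecteur propre de $\varphi$, le sous-module $\cR[\frac{1}{t}]\cF=\cR[\frac{1}{t}]\cdot e$ est un sous-$(\varphi,\Gamma)$-module libre de rang $1$ de $D[\frac{1}{t}]$. Son intersection $D_\cF$ avec $D$ est un sous-$(\varphi,\Gamma)$-module saturé de $D$, de rang $1$ sur $\cR$; par les résultats classiques sur les $(\varphi,\Gamma)$-modules sur $\cR$, $D_\cF$ et $D/D_\cF$ sont alors libres de rang $1$, et $0\subset D_\cF\subset D$ est une triangulation.

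\textbf{Sens inverse.} Étant donnée une triangulation $0\subset D_1\subset D$, posons $\cF:=(D_1[\frac{1}{t}])^\Gamma$, qui est automatiquement stable par $\varphi$. L'inclusion $D_1[\frac{1}{t}]\hookrightarrow D[\frac{1}{t}]$ donne $\cF\hookrightarrow \D_{\cris}(V)$, d'où $\dim_L\cF\leq \rank D_1=1$. La minoration $\dim_L\cF\geq 1$ est le c\oe ur de la preuve: en partant de la suite exacte $0\to D_1[\frac{1}{t}]\to D[\frac{1}{t}]\to (D/D_1)[\frac{1}{t}]\to 0$ et en prenant les $\Gamma$-invariants, on obtient la suite exacte à gauche
\[0\to \cF\to \D_{\cris}(V)\to \bigl((D/D_1)[\tfrac{1}{t}]\bigr)^\Gamma,\]
dont le but est de $L$-dimension au plus $\rank(D/D_1)=1$; comme $V$ est cristalline, $\dim_L\D_{\cris}(V)=2$ et donc $\dim_L\cF\geq 2-1=1$.

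\textbf{Inverses mutuelles et obstacle principal.} Partant d'une droite propre $\cF$, l'espace $(D_\cF[\frac{1}{t}])^\Gamma$ contient $\cF$ et est de dimension au plus $1$ par l'argument précédent, donc lui est égal. Réciproquement, si l'on part d'une triangulation $0\subset D_1\subset D$ et que l'on pose $\cF=(D_1[\frac{1}{t}])^\Gamma$, alors $\cR[\frac{1}{t}]\cF$ et $D_1[\frac{1}{t}]$ sont deux sous-$\cR[\frac{1}{t}]$-modules de rang $1$ de $D[\frac{1}{t}]$ avec le premier contenu dans le second, donc ils coïncident; en intersectant avec $D$ et en invoquant la saturation de $D_1$ dans $D$ (garantie par la liberté de $D/D_1$), on récupère $D_\cF=D_1$. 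L'obstacle principal reste la minoration $\dim_L\cF\geq 1$: elle exploite pleinement la cristallinité de $V$, et c'est précisément ce type d'argument qui se généralise au rang quelconque dans \cite{BeCh} moyennant un traitement inductif plus soigneux des sous-quotients successifs d'une triangulation.
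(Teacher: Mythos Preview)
The paper does not give its own proof: it records the forward construction $\cF\mapsto D_\cF$ and defers to \cite[proposition 2.4.1]{BeCh} for the bijection. Your proposal supplies an actual argument, and the overall strategy --- passing to $D[\frac{1}{t}]$, taking $\Gamma$-invariants, and using the dimension count forced by cristallinity --- is the correct one and is essentially how Bella\"iche--Chenevier proceed in general rank.

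There is however one step whose justification is wrong as written. In the ``inverses mutuelles'' paragraph you assert that $\cR[\frac{1}{t}]\cF$ and $D_1[\frac{1}{t}]$ coincide because they are ``deux sous-$\cR[\frac{1}{t}]$-modules de rang $1$ de $D[\frac{1}{t}]$ avec le premier contenu dans le second''. This inference is false over $\cR[\frac{1}{t}]$: for any non-unit $a$ the inclusion $a\,\cR[\frac{1}{t}]\subsetneq \cR[\frac{1}{t}]$ already gives a counterexample inside a rank-$1$ module. What actually forces the equality is the structure of rank-$1$ $(\varphi,\Gamma)$-modules. Since you have shown $\dim_L\cF=1$, writing $D_1\cong\cR(\delta)$ one sees that $\delta|_{\Gamma}$ must admit a nonzero eigenvector in $\cR[\frac{1}{t}]$, which forces $\delta|_{\Z_p^*}=\chi_{\cycl}^k$ for some $k\in\Z$; then $\cF=L\cdot t^{-k}e_\delta$, and $\cR[\frac{1}{t}]\cF=\cR[\frac{1}{t}]\,t^{-k}e_\delta=\cR[\frac{1}{t}]\,e_\delta=D_1[\frac{1}{t}]$ because $t^{-k}$ is a unit in $\cR[\frac{1}{t}]$. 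Equivalently, one may invoke Berger's comparison: a rank-$1$ $(\varphi,\Gamma)$-module $D_1$ with $\dim_L\D_{\cris}(D_1)=1$ satisfies $\cR[\frac{1}{t}]\otimes_L\D_{\cris}(D_1)\xrightarrow{\sim} D_1[\frac{1}{t}]$. Once this point is repaired, your proof is complete.
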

Soient $N\geq 1$ un entier \emph{premier à} $p$, $k\geq 2$ un entier, et $\eps$ un caractère
de Dirichlet modulo N (pas nécessairement primitif). Fixons une forme modulaire primitive $f$ de niveau $\Gamma_1(N)$ de caractère de Dirichlet $\eps$. En particulier, $a_1=1$ et $\Q(f)=\Q(a_2, . . . , a_n, . . . )$ est une extension finie de $\Q$.

 Si $S$ est un sous-ensemble fini de $\cP$ tel que $\{l\in \cP: l|N\}\subset S$, et si $M$ est un $\Q(f)$-espace vectoriel muni d'actions des $T(l), T'(l), l\notin S\cup\{p\}$, l'opérateur $T_p$ et $(\begin{smallmatrix}u^{-1}& 0\\ 0& u\end{smallmatrix})$, pour $u\in U$ sous-groupe ouvert de $\hat{\Z}^*$, on note $M_{\pi_{f}}$ le quotient de $M$ par le sous-$\Q(f)$-espace vectoriel engendré par les $x*T(l)-a_lx$, où $x\in M$ et  $l\notin S\cup\{p\}$. 

D'après Deligne \cite{PD}, on sait associer une représentation galoisienne de $\cG_{\Q}$ de dimension $2$ à $f$. De manière explicite:  on note $\bar{\Gamma}_1(N)$ le complété profini de $\Gamma_1(N)$ et on note $V_p= \Q_pe_1 \oplus \Q_pe_2$ la représentation standard
de dimension $2$ de $\GL_2(\Z_p)$ donnée par $e_1*\gamma = ae_1 + be_2$ et $e_2*\gamma= ce_1 + de_2$ si
$\gamma=(\begin{smallmatrix}a &b\\
c& d\end{smallmatrix})\in\GL_2(\Z_p)$.
On définit la représentation galoisienne de $\cG_\Q$ associée à $f$ par 
\[ V_{f}=(\rH^1(\bar{\Gamma}_1(N), \Sym^{k-2}V_p)\otimes_{\Q_p}\Q_p(f))_{\pi_{f}}\otimes_{\Q_p}\Q_p(2-k).\] 
C'est une $\Q_p(f)$-représentation irréductible de $\cG_{\Q}$ de dimension $2$, non ramifiée en dehors de $Np$ . Si $l\nmid Np$, le déterminant de $1-\Frob_l^{-1}X$ agissant sur $V_{f}$ est $1-a_lX+\eps(l)l^{k-1}X^2$, où $\Frob_l$ est un frobenius arithmétique en $l$. La restriction de $V_f$ à $\cG_{\Q_p}$ est cristalline de poids de Hodge-Tate\footnote{ Par convention, le poids de Hodge-Tate du caractère cyclotomique est $1$.} $0,1-k$ et le polynôme caractéristique de $\varphi$ sur $\D_{\cris}(V_f)$ est $X^2-a_pX+\eps(p)p^{k-1}$.

On note $\Gamma(N;p)=\Gamma_1(N)\cap \Gamma_0(p)$. En tant que représentation de $\cG_{\Q}$, on a \[\rH^1(\Gamma(N;p), \Sym^{k-2}V_p(1))_{\pi_f}\cong V_{f}(k-1)\otimes V_f(k-1).\]
\begin{defn}Si $\alpha$ est une racine du polynôme caractéristique de $T_p$, on note $V_{f_\alpha}$ le plus grand espace quotient de $\rH^1(\Gamma(N;p), \Sym^{k-2}V_p(1))_{\pi_f}$ propre pour l'opérateur $U_p$ avec la valeur propre $\alpha$; on dit que $V_{f_\alpha}$ est \emph{un raffinement de} $V_f$. En tant que représentation de $\cG_{\Q}$, on a $V_{f_\alpha}\cong V_{f}(k-1)$.
\end{defn}

 Par l'isomorphisme d'Eichler-Shimura $\cite{Fa}$, on a le diagramme commutatif Hecke-Galois équivariant de $\Q_p(f_\alpha)\otimes\C_p$-modules 
\[\xymatrix{ (\rH^1(\Gamma_1(N), \Sym^{k-2}V_p(1))\otimes\Q_p(f_\alpha))\otimes_{\Q_p}\C_p \ar[r]\ar[d]& M_k(\Gamma_1(N),\Q_p(f_\alpha))\otimes \C_p\ar[d] 
\\ 
(\rH^1(\Gamma(N;p), \Sym^{k-2}V_p(1))\otimes\Q_p(f_\alpha) )\otimes_{\Q_p}\C_p\ar[r]\ar[d] &M_k(\Gamma(N;p),\Q_p(f_\alpha))\otimes\C_p\ar[d]
\\ 
V_{f_\alpha }\otimes\C_p\ar[r]&\Q_p(f_\alpha) f_\alpha\otimes\C_p},\]   
où l'action de $\cG_{\Q}$ est linéaire sur $\Q_p(f_\alpha)$, semi-linéaire sur $\C_p$ à travers $\cG_{\Q_p}$. Le corollaire suivant est une conséquence immédiate de la proposition \ref{raff_BC}.

\begin{coro}Soit $f$ une forme modulaire primitive de niveau $\Gamma_1(N)$. Soit $V_f$ la représentation galoisienne associée à $f$ comme ci-dessus. Alors il existe une bijection naturelle entre l'ensemble des raffinements de $V_f$ et l'ensemble des triangulations de $D=\D_{\rig}(V_f)$.
\end{coro}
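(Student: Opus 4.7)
The plan is to reduce the statement directly to Proposition \ref{raff_BC} by identifying refinements of $V_f$ with $\varphi$-stable eigenlines of $\D_{\cris}(V_f)$. Recall from the discussion above that $V_f|_{\cG_{\Q_p}}$ is crystalline of dimension $2$, so $\D_{\cris}(V_f)$ is a $2$-dimensional $L$-vector space (with $L = \Q_p(f)$), and the characteristic polynomial of the $L$-linear Frobenius $\varphi$ on $\D_{\cris}(V_f)$ is $P(X) = X^2 - a_p X + \eps(p) p^{k-1}$. By definition, a refinement of $V_f$ is specified by the choice of a root $\alpha$ of $P(X)$, since $P(X)$ is precisely the characteristic polynomial of $T_p$ on the corresponding Hecke eigenspace.

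The core of the argument is then the following chain of bijections:
\[
\{\text{refinements of } V_f\} \;\longleftrightarrow\; \{\text{roots } \alpha \text{ of } P(X)\} \;\longleftrightarrow\; \{\varphi\text{-stable eigenlines in } \D_{\cris}(V_f)\} \;\longleftrightarrow\; \{\text{triangulations of } D\}.
\]
The first bijection is the definition of a refinement. The middle bijection assigns to a root $\alpha$ the line $\cF_\alpha = \ker(\varphi - \alpha) \subset \D_{\cris}(V_f)$; conversely a $\varphi$-stable line determines the eigenvalue by which $\varphi$ acts on it. The last bijection is exactly the content of Proposition \ref{raff_BC}, sending a triangulation $0 \subsetneq D_1 \subsetneq D$ to the line $(D_1[\tfrac{1}{t}])^{\Gamma}$. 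Composing the three yields the natural bijection claimed in the corollary, with an explicit inverse $D_1 \mapsto \alpha$ given by the $\varphi$-eigenvalue on $(D_1[\tfrac{1}{t}])^{\Gamma}$.

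The only potential obstacle is the case of a repeated root, i.e.\ when $P(X)$ has a double root $\alpha$. In that situation one must check that either $\varphi$ is non-semisimple on $\D_{\cris}(V_f)$ (so that there is a single $\varphi$-stable eigenline, matching the single refinement), or $\varphi$ is scalar (in which case every line is $\varphi$-stable, but the Eichler--Shimura diagram above forces a distinguished choice corresponding to the $U_p$-eigenspace). For $f$ a primitive form of weight $k \geq 2$ this degenerate situation essentially does not occur by standard results, so the bijection is unambiguous; but in any case the formulation of the statement is compatible with whichever of these scenarios arises because Proposition \ref{raff_BC} is stated at the level of $\varphi$-stable eigenspaces, not of roots counted with multiplicity. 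Hence the corollary follows with no further work beyond recording this identification.
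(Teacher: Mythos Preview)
Your proposal is correct and follows the same route as the paper: the paper's proof consists solely of the sentence ``Le corollaire suivant est une cons\'equence imm\'ediate de la proposition \ref{raff_BC}'', together with the Eichler--Shimura diagram just above, which links the $U_p$-eigenvalue $\alpha$ on the Hecke side to the $\varphi$-eigenvalue on $\D_{\cris}(V_f)$. You have simply made explicit the chain of bijections (refinements $\leftrightarrow$ roots of $P(X)$ $\leftrightarrow$ $\varphi$-stable lines $\leftrightarrow$ triangulations) that the paper leaves implicit, and your extra paragraph on the repeated-root case goes beyond what the paper provides.
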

\subsection{Familles faiblement raffinées}
\subsubsection*{Le foncteur de Berger-Colmez $\D_{\mathrm{BC},\rig}$}
Soit $S$ une $\Q_p$-algèbre de Banach et soit $\cO_S$ l'anneau des éléments de $S$ de norme $\leq 1$. On note $M(S)$ l'espace rigide analytique associé à $S$. Si $x\in M(S)$, on note $m_x$ l'idéal maximal de $S$ correspondant à $x$. 
\begin{defn} Une $S$-représentation $V_S$ de $\cG_{\Q_p}$ est un $S$-module localement libre muni d'une action continue $S$-linéaire de $\cG_{\Q_p}$.

\end{defn}
Si $r>0$, on note $\cR_r$ l'anneau des fonctions analytiques sur la couronne 
\[I_r=\{T\in\C_p: 0< v_p(T)\leq \frac{1}{r}\}\] et on a $\cR=\cup_{r>0}\cR_r$. On note $\cR_{r,S}=S\hat{\otimes}\cR_r$ et $\cR_S=\cup_{r>0}\cR_{r,S}$.

Le foncteur $V\mapsto \D_{\rig}(V)$ s'étend aux $S$-représentations de $\cG_{\Q_p}$ grâce aux travaux de Berger et Colmez \cite{BC}, Kedlaya et Liu \cite{KL}; de manière précise, on dispose (voir \cite[\S 3]{KL}) d'un foncteur $\D_{\mathrm{BC},\rig}$ de la catégorie des $S$-représentations de $\cG_{\Q_p}$ sur la catégorie des $(\varphi,\Gamma)$-modules étales sur $\cR_S$, vérifiant, si $V_S$ est une $S$-représentation de rang $d$,  \\
$(1)$ le $(\varphi,\Gamma)$-module $\D_{\mathrm{BC},\rig}(V_S)$ est localement libre de rang $d$ sur $\cR_S$;\\
$(2)$ si $x\in \Spm S$, on a  $\D_{\mathrm{BC},\rig}(V_S)\otimes_S(S/m_x)\cong \D_{\rig}(V_x)$, où $V_x=V_S\otimes_S(S/m_x)$.

\subsubsection*{Familles faiblement raffinées}
Soit $\sX$ un espace rigide analytique réduit et séparé. 
\begin{defn}Une famille faiblement raffinée de représentations $p$-adiques de dimension $2$ sur $\sX$ est la donnée d'un $\cO(\sX)$-module $V_{\sX}$, localement libre de rang $2$, muni d'une action $\sX$-linéaire continue de $\cG_{\Q_p}$ et de données suivantes:

$(1)$ $2$ fonctions analytiques $\kappa_1,\kappa_2\in \cO(\sX)$,

$(2)$ une fonction analytique $F\in \cO(\sX)$,

$(3)$ un sous-ensemble Zariski-dense $Z$ de $\sX$,\\
qui vérifient les conditions suivantes:

$(a)$ Les poids de Hodge-Tate de $V_\sX$ sont $\kappa_1,\kappa_2$;

$(b)$ Si $z\in Z$, alors $V_z$ est cristalline et $\kappa_1(z)$ est le plus grand poids de Hodge-Tate de $V_z$;

$(c)$ Si $z\in Z$, $\D_\cris(V_z)$ admet d'un sous $\varphi$-module propre de rang $1$ avec la valeur propre 
$p^{-\kappa_1(z)}F(z)$; 


$(d)$ Si $C\in \N$, on note $Z_C$ l'ensemble des $z\in Z$ tels que $\kappa_1(z)-\kappa_2(z)>C$. Alors pour tout $C$, $Z_C$ s'accumule en tout les points de $Z$. Autrement dit, pour tous $z\in Z$, $C>0$, et tout voisinage affinoïde $U$ de $z$, $U\cap Z_C$ est Zariski dense dans $U$;

$(e)$ Il existe un caractère continu $\eta: \Z_p^*\ra \cO(\sX)^*$, dont la dérivée en $1$ est l'application $\kappa_1$ et l'évaluation en $z\in Z$ est le caractère $x\mapsto x^{\kappa_1(z)} $.
\end{defn}
\begin{remark}La notion de famille faiblement raffinée en dimension quelconque a été étudiée par Bellaïche et Chenevier $\cite[\S 4.2]{BeCh}$.\end{remark}
La proposition suivant, qui dit qu'une famille faiblement raffinée de représentations $p$-adiques de dimension $2$ admet d'une triangulation globale, est une conséquence facile de \cite[\text{theorem} 0.3.4]{Liu}.  
\begin{prop}\label{triangulation} Soit $(V_\sX, Z, F, \kappa_1,\kappa_2,\eta)$ une famille faiblement raffinée de représentations $p$-adiques de dimension $2$. Alors $\D_{\mathrm{BC},\rig}(V_\sX(\eta^{-1}))^{\varphi=F,\Gamma=1}\otimes_{\cO(\sX)}\D_{\mathrm{BC},\rig}(\eta)$ est un faisceau cohérent sur $\sX$ de rang $1$. En particulier, on a une triangulation en famille de $\D_{\mathrm{BC},\rig}(V_\sX)$:
\[0\subset \D_{\mathrm{BC},\rig}(V_\sX(\eta^{-1}))^{\varphi=F,\Gamma=1}\otimes_{\cO_\sX} \D_{\mathrm{BC},\rig}(\eta) \subset \D_{\mathrm{BC},\rig}(V_\sX).\]
\end{prop}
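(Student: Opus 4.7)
The plan is to apply Liu's global triangulation theorem \cite[theorem 0.3.4]{Liu} after first performing a normalizing twist. Because $\eta\colon \Z_p^* \to \cO(\sX)^*$ specializes at $z \in Z$ to the character $x \mapsto x^{\kappa_1(z)}$, the rank-one $(\varphi,\Gamma)$-module $\D_{\mathrm{BC},\rig}(\eta)$ has Hodge-Tate weight $\kappa_1(z)$ at every classical point, and a short computation shows that its crystalline Frobenius specializes to $p^{-\kappa_1(z)}$. Thus $V'_\sX := V_\sX(\eta^{-1})$ is a family whose Hodge-Tate weights at $z \in Z$ are $0$ and $\kappa_2(z)-\kappa_1(z)$, and the eigenvalue normalization $p^{-\kappa_1(z)}F(z)$ appearing in axiom $(c)$ of the definition becomes simply $F(z)$ after the twist. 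In other words, $\D_{\cris}(V'_z)$ carries a canonical $\varphi$-eigenvector of eigenvalue $F(z)$ for every $z \in Z$.

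Next I would verify that $(V'_\sX, Z, F)$ satisfies the hypotheses of Liu's theorem: $Z$ is Zariski-dense in $\sX$, the function $F$ is analytic, the crystalline eigenvalue condition is supplied by axiom $(c)$ as just discussed, and the accumulation property in axiom $(d)$ provides the indispensable slope-separation input. Concretely, one uses $(d)$ to observe that on $Z_C$ for $C$ large the Hodge-Tate gap $\kappa_1(z)-\kappa_2(z)$ is arbitrarily large, which by weak admissibility forces $v_p(F(z))$ to be strictly smaller than the valuation of the other crystalline Frobenius eigenvalue of $V'_z$. Consequently the $\varphi = F(z)$ line is unambiguously the non-critical refinement, corresponding to the \emph{sub}-object (not the quotient) in every triangulation of $\D_{\rig}(V'_z)$ produced by Proposition \ref{raff_BC}.

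With these inputs, Liu's theorem yields that
\[M := \D_{\mathrm{BC},\rig}(V'_\sX)^{\varphi = F,\, \Gamma = 1}\]
is a coherent $\cO(\sX)$-module, locally free of rank $1$ on a Zariski-open dense subset, and that the canonical evaluation map $M \otimes_{\cO(\sX)} \cR_{\cO(\sX)} \hookrightarrow \D_{\mathrm{BC},\rig}(V'_\sX)$ is injective with saturated image. Tensoring this inclusion by the rank-one $(\varphi,\Gamma)$-module $\D_{\mathrm{BC},\rig}(\eta)$ and using compatibility of $\D_{\mathrm{BC},\rig}$ with twists, namely $\D_{\mathrm{BC},\rig}(V'_\sX) \otimes \D_{\mathrm{BC},\rig}(\eta) \cong \D_{\mathrm{BC},\rig}(V_\sX)$, produces precisely the claimed sub-$(\varphi,\Gamma)$-module
\[0 \subset M \otimes_{\cO(\sX)} \D_{\mathrm{BC},\rig}(\eta) \subset \D_{\mathrm{BC},\rig}(V_\sX),\]
which is the desired global triangulation.

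The main obstacle is the slope-separation step needed to invoke Liu. A priori, the $\varphi = F$, $\Gamma = 1$ invariants in a family of $(\varphi,\Gamma)$-modules over $\cR_{\cO(\sX)}$ need not form a coherent sheaf: the rank may jump on closed sub-loci where $F$ meets another eigenvalue. The force of axiom $(d)$, which is the only non-formal hypothesis in the weakly refined package, is precisely to supply, on every affinoid neighborhood of every $z \in Z$, enough non-critical crystalline specializations to rule out such collisions and let the rank-one fiberwise structure spread out globally. Once this is in hand, everything else in the proposition follows from the functorial formalism of $\D_{\mathrm{BC},\rig}$ and the twist isomorphism.
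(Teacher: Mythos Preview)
Your proposal is correct and follows the same route as the paper: both derive the proposition from \cite[theorem 0.3.4]{Liu} after the normalizing twist by $\eta^{-1}$. In fact the paper gives no proof at all beyond the sentence ``est une conséquence facile de \cite[theorem 0.3.4]{Liu}'', so your write-up supplies considerably more detail (the twist computation, the role of axiom $(d)$ in slope separation, and the tensoring-back step) than the source does.
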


\section{Famille de Système d'Euler de Kato sur l'espace des poids et ses variantes}

\subsection{L'espace des poids $\sW$ et le caractère universel}
On note $\Lambda=\Z_p[[\Z_p^*]]$ l'algèbre d'Iwasawa. Le groupe rigide analytique $\sW$ sur $\Q_p$ qui lui est associé est appelé \emph{l'espace des poids}.  Les $\C_p$-points de $\sW$ constituent l'ensemble $\Hom_{\cont}(\Z_p^*,\C^*_p)$ des caractères continus sur $\Z_p^*$ à valeurs dans $\C_p$. La décomposition de $\Z_p^*\cong\mu_{p-1}\times (1+p\Z_p)$, où $\mu_{p-1}$ est le groupe des unités d'ordre $p-1$, induit une décomposition 
\[\sW(\C_p)=\Hom(\mu_{p-1},\C_p^*)\times \Hom_{\cont}(\Gamma,\C_p^*), \text{ où } \Gamma=1+p\Z_p \text{ est un pro-$p$-groupe}.\]  On a une inclusion $\Z\subset\sW$ envoyant $k$ sur le caractère $(z\mapsto z^{k-2})$.
 
L'inclusion naturelle $\Z_p^*\subset\Z_p[\Z_p^*]$ induit un caractère canonique \[\kappa^{\univ}:\Z_p^*\ra \Z_p[\Z_p^*] \subset \cO(\sW),\] appelé le \emph{caractère universel} de $\sW$. Soit $U$ un ouvert affinoïde de $\sW$. Si $\kappa\in U(\C_p)$, on définit une application $\mathrm{Ev}_\kappa: \cO(U)\ra\C_p$ d'évaluation en $\kappa$. En particulier, on a $\kappa=\mathrm{Ev}_\kappa\circ\kappa^{\univ}$.

On rappelle que si $\mathscr{X}$ est un $\Q_p$-espace rigide, un sous-ensemble $Z\subset \mathscr{X}$ est dit \emph{Zariski-dense} si pour tout sous-ensemble analytique $U\subset \mathscr{X}$ tel que $Z\subset U$, on a alors $U=\mathscr{X}$.  Soit $Z\subset \mathscr{X}$ un sous-ensemble Zariski-dense, tel que pour tout $z\in Z$ et tout voisinage ouvert affinoïde $V$ de $z$ dans $\mathscr{X}$, $V\cap Z$ est Zariski-dense dans chaque composante irréductible de $V$ contenant $z$, on dira alors que $Z$ est \emph{très Zariski-dense} dans $\mathscr{X}$. Un exemple important est que l'ensemble $\N$ (resp. $\Z$) est très Zariski-dense dans l'espace $\sW$.

On dira qu'un ouvert affinoïde $U=\Spm R\subset \sX$ est \emph{agréable} si l'anneau $R$ et son anneau résiduel $\tilde{R}=R^0/pR^0$ sont des anneaux principaux, où $R^0$ est l'anneau des entiers de $R$. Pour tout $1\leq n\in \N$ et $k\in\Z\subset \sW$,  la boule fermée $W_{k,n}$ de centre $k$ de rayon $p^{-n}$ est un ouvert affinoïde agréable de $\sW$. En particulier, tout point $k\in\Z$ admet une base de voisinages d'ouverts affinoïdes agréables dans $\sW$.

\subsection{Familles de représentations de $\I_0(p)$ sur $\sW$}\label{system}
On note $\I_0(p)$ le sous-groupe d'Iwahori de $\GL_2(\Z_p)$. Le but de ce paragraphe est de construire, en modifiant la construction de la représentation $\mathbf{Mes}_{\sW,j}$ de $\I_0(p)$ dans \cite[\S 2.2]{WangI}, des grosses représentations de $\I_0(p)$, qui sont des variantes de $\mathbf{Mes}_{\sW,j}$ et apparaissent dans la construction de la famille de systèmes d'Euler de Kato sur $\fC^0$.

\subsubsection*{$(1)$ La $\cO^{\mathrm{bd}}(\sW)$-représentation $\mathbf{Mes}^{\mathrm{bd}}_{\sW,j}$} 
On note $\cO^{\mathrm{bd}}(\sW)$ l'anneau des fonctions analytiques bornées sur l'espace des poids, qui est isomorphe à l'anneau $\Z_p[[\Z_p^*]]\otimes\Q_p$.   On définit une action $\cO^{\mathrm{bd}}(\sW)$-linéaire à gauche de $\I_0(p)$ sur le $\cO^{\mathrm{bd}}(\sW)$-module $\cC^0(\Z_p,\cO^{\mathrm{bd}}(\sW))$ des fonctions continues sur $\Z_p$ à valeurs dans $\cO^{\mathrm{bd}}(\sW)$ par la formule:
\[\gamma f(z)=f(\frac{b+dz}{a+cz}), \text{ si } f(z)\in \cC^0(\Z_p,\cO^{\mathrm{bd}}(\sW)) \text{ et } \gamma=(\begin{smallmatrix}a&b\\ c& d\end{smallmatrix})\in\I_0(p).  \]
Soit $1\leq j\in \N$. Soient $a\in\Z_p^*$ et $c\in p\Z_p$. La fonction
 \[\rho_j^{\mathrm{univ}}(\gamma)=\kappa^{\mathrm{univ}}(a+cz)\det\gamma^{-j}, \text{ où } \gamma=(\begin{smallmatrix}a&b\\ c& d\end{smallmatrix})\in\I_0(p),\]
  est un $1$-cocycle sur $\I_0(p)$ à valeurs dans le groupe des unités de l'anneau $\cC^0(\Z_p,\cO^{\mathrm{bd}}(\sW))$.

On note $\cC^{0,\mathrm{bd}}_{\sW,j}$ la $\cO^{\mathrm{bd}}(\sW)$-représentation de Banach de $\I_0(p)$, dont l'action de $\I_0(p)$ est donnée par la formule $\gamma f(z)=\rho_j^{\mathrm{univ}}(\gamma) f(\frac{b+dz}{a+cz})$.
Ceci nous permet d'appliquer la construction \cite[\S 2.2]{WangI} à la donnée
  $(\cC^0(\Z_p,\cO^{\mathrm{bd}}(\sW)),\rho^{\univ}_j)$; il en résulte une $\cO^{\mathrm{bd}}(\sW)$-représentation de Banach à droite  $\mathbf{Mes}_{\sW,j}^{\mathrm{bd}}$, qui est isomorphe à $\fD_0(\Z_p,\Q_p)\hat{\otimes}\cO^{\mathrm{bd}}(\sW)$ comme $\cO^{\mathrm{bd}}(\sW)$-modules. Elle donne une interpolation $p$-adique en poids $k$ des représentations algébriques $V_{k,j+2}$ de $\I_0(p)$, où $V_{k,j}=\Sym^{k-2}V_p\otimes \det^{2-j}$ avec $V_p$ la $L$-représentation standard de $\GL_2(\Z_p)$. Plus précisément, si $k\in\Z$, on note $\mathbf{Mes}^{\mathrm{bd}}_{k,j}=\mathrm{Ev}_k(\mathbf{Mes}^{\mathrm{bd}}_{\sW,j})$ et on définit une application $\Q_p$-linéaire continue $\I_0(p) $-équivariante
$\pi_{k,j}:\mathbf{Mes}^{\mathrm{bd}}_{k,j}\ra V_{k,j+2}$ par l'intégration: $\mu\mapsto \int_{\Z_p} f(z)\mu $, où  la fonction $f(z)=(e_1+ze_2)^{k-2}t^{-j}$ est à valeurs dans $V_{k,j+2}$. En composant l'application d'évaluation $\mathrm{Ev}_{k}$ et l'application $\pi_{k,j}$, on obtient une application de spécialisation
$\mathrm{Sp}_{k,j}: \mathbf{Mes}^{\mathrm{bd}}_{\sW,j}\ra V_{k,j+2}$,  qui est $\I_0(p)$-équivariante. En particulier, la masse de Dirac $\delta_0$ en $0$ fournit un élément $\nu_j$  de $\mathbf{Mes}_{\sW,j}^{\mathrm{bd}}$  qui interpole le vecteur de plus haut poids $e_1^{k-2}t^{-j}$ dans $V_{k,j+2}$ (i.e. on a $\mathrm{Sp}_{k,j} (\nu_j)= e_1^{k-2} t^{-j} $).

\subsubsection*{$(2)$ Les $W$-représentations $\D_{r,W,j}$ }
 
Si $r>0$, on note $\LA_{r}(\Z_p,L)$ l'espace des fonctions $f:\Z_p\ra L$ dont la restriction à $a+p^r\Z_p$ est la restriction d'une fonction $L$-analytique sur le disque fermé $\{x\in \C_p,v_p(x-a)\geq r\}$, quel que soit $a\in \Z_p$; c'est un $L$-Banach orthonormalisable et on note $\D_{r}(\Z_p,L)$ le $L$-dual de $\LA_{r}(\Z_p,L)$.
 
Si $n\in \N$ et $n\geq 1$, on note $r_n\geq 1$ le plus petit entier tel que $(p-1)p^{r_n}>n$. L'espace des poids $\sW$ admet le recouvrement admissible $\{\sW_n\}_{n\geq 0}$, où $\sW_n=\mu_{p-1}\times \Spm C_n$ avec $C_n$ le sous-anneau de $\Q_p[[T_1-1]]$ consistant des fonctions analytiques sur le disque $v_p(T_1-1)\geq \frac{1}{n}$.  
Pour chaque $\sW_n$, on note $\LA_{r,n}=\LA_{r}(\Z_p,\cO(\sW_n))$. Plus généralement, si $W$ est un ouvert affinoïde de $\sW$, on note $\LA_{r,W}$ le $\cO(W)$-algèbre de Banach $\LA_r(\Z_p, \cO(W))$ avec $r>0$. On note $\LA_{r,W}^0$ la boule unité de $\LA_{r,W}$.
\begin{lemma}\label{1-cocycle} 
Si $r\geq r_n$, la fonction $\rho_j^{\mathrm{univ}}$ est un $1$-cocycle sur $\I_0(p)$ à valeurs dans $\LA_{r,n}$. De plus, elle est une unité de $\LA_{r,n}$. Plus généralement, si $W$ est un ouvert affinoïde de $\sW$, il existe un nombre rationnel $r_W>0$, tel que pour tout $r\geq r_W$, la fonction $\rho_j^{\mathrm{univ}}$ est à valeurs le groupes des unités de l'anneau $\LA_{r,W}$.  
\end{lemma}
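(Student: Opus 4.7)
The plan is to leverage the cocycle identity for $\rho_j^{\mathrm{univ}}$ already established in $\cC^0(\Z_p,\cO^{\mathrm{bd}}(\sW))^*$ and to refine the statement by showing that each $\rho_j^{\mathrm{univ}}(\gamma)$ actually lands in $\LA_{r,n}^*$. The cocycle identity itself comes from the matrix identity $a_{12}+c_{12}z=(a_1+c_1 z)\bigl(a_2+c_2\cdot\frac{b_1+d_1 z}{a_1+c_1 z}\bigr)$ --- where $(a_{12},c_{12})$ is the first column of $\gamma_1\gamma_2$ --- combined with multiplicativity of $\kappa^{\mathrm{univ}}$ and of $\det$; it is automatic once we verify that the values lie in the appropriate unit group. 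The factor $\det(\gamma)^{-j}\in\Z_p^*$ is already a unit of $\LA_{r,n}$, so it remains to show that $\alpha_\gamma(z):=\kappa^{\mathrm{univ}}(a+cz)\in\LA_{r,n}^*$.

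For the analyticity step, fix $\gamma=(\begin{smallmatrix}a&b\\c&d\end{smallmatrix})\in\I_0(p)$ and a residue $a'\in\Z_p/p^r\Z_p$, and parametrize $a'+p^r\Z_p$ by $z=a'+p^r y$ with $y\in\Z_p$. Setting $a'':=a+ca'\in\Z_p^*$ (using $c\in p\Z_p$) and $h:=cp^r/a''\in p^{r+1}\Z_p$, one has $a+cz=a''(1+hy)$, hence by multiplicativity
\[\alpha_\gamma(z)=\kappa^{\mathrm{univ}}(a'')\cdot\kappa^{\mathrm{univ}}(1+hy),\]
with $\kappa^{\mathrm{univ}}(a'')\in\cO(\sW)^*$ independent of $y$. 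Taking $\gamma_0:=1+p$ and $T_1:=\kappa^{\mathrm{univ}}(\gamma_0)\in C_n^*$ (so $v_p(T_1-1)\geq 1/n$ in $C_n$), and writing $1+hy=\gamma_0^{\lambda(y)}$ with $\lambda(y)=\log(1+hy)/\log\gamma_0\in \Z_p\{y\}$, the Mahler-type expansion
\[\kappa^{\mathrm{univ}}(1+hy)=T_1^{\lambda(y)}=\sum_{k\geq 0}\binom{\lambda(y)}{k}(T_1-1)^k\]
converges in $C_n\{y\}\subset\LA_{r,n}$, since $\binom{\lambda(y)}{k}\in\Z_p\{y\}$ has Gauss norm $\leq 1$ and $v_p((T_1-1)^k)\geq k/n$ in $C_n$. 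Applying exactly the same argument to the inverse character $u\mapsto\kappa^{\mathrm{univ}}(u)^{-1}$ gives $\alpha_\gamma(z)^{-1}\in\LA_{r,n}$; hence $\alpha_\gamma\in\LA_{r,n}^*$, establishing the statement for the affinoids $\sW_n$.

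For a general affinoid open $W\subset\sW$, I would invoke that $\sW=\bigcup_n\sW_n$ is an admissible cover and $W$ is quasi-compact, so $W\subset\sW_n$ for some $n$; setting $r_W:=r_n$ and restricting scalars along $C_n\to\cO(W)$ transports the previous argument verbatim to $\LA_{r,W}$, which yields the general statement.

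The main technical point --- and the reason for the bound $r\geq r_n$ --- is the $p$-adic radius-of-convergence check: one has to verify that the Mahler expansion of $T_1^{\lambda(y)}$ both converges in $C_n\{y\}$ and remains invertible there. This requires balancing the $p$-divisibility of $\lambda(y)$ (of order $p^r$, since $h\in p^{r+1}\Z_p$) against the size of $T_1-1$ (of valuation $\geq 1/n$); the constant $r_n$, defined by $(p-1)p^{r_n}>n$, is calibrated so that the combined estimate simultaneously secures convergence and invertibility in the Banach algebra $C_n\{y\}$.
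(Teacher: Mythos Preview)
The paper states this lemma without proof, so there is nothing to compare against; let me instead assess the argument on its own merits.

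Your overall strategy (reduce to showing $\kappa^{\mathrm{univ}}(1+hy)\in C_n\{y\}^*$ via the Mahler expansion $T_1^{\lambda(y)}=\sum_k\binom{\lambda(y)}{k}(T_1-1)^k$) is the right one, and the cocycle identity and the reduction to a general affinoid $W\subset\sW_n$ are fine. However, there is a genuine gap in the body of the argument: the claim that $\binom{\lambda(y)}{k}\in\Z_p\{y\}$ has Gauss norm $\leq 1$ is false. It is true that $\lambda\mapsto\binom{\lambda}{k}$ sends $\Z_p$ into $\Z_p$, but the Gauss norm on $\Q_p\{y\}$ equals the supremum over \emph{all} $y\in\cO_{\C_p}$, and for such $y$ one only knows $\lambda(y)\in p^r\cO_{\C_p}$, not $\lambda(y)\in\Z_p$. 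For $\lambda\in\cO_{\C_p}\setminus\Z_p$ the binomial $\binom{\lambda}{k}$ need not be integral; for instance one checks that if $\zeta$ is a root of unity of order prime to $p$ lying outside $\F_p$, then $\bigl|\binom{p^r\zeta}{p^{r+1}}\bigr|=p$, and such values are attained by $\lambda(y)$ for suitable $y\in\cO_{\C_p}$. If your bound were correct, the series would converge for every $r\geq 0$ and the hypothesis $r\geq r_n$ would be superfluous.

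The fix is exactly what your closing paragraph hints at but does not carry out. Bounding each factor $\|\lambda(y)-i\|=\max(p^{-r},p^{-v_p(i)})$ in the product defining $\binom{\lambda(y)}{k}$ and dividing by $|k!|$ gives
\[
\Bigl\|\binom{\lambda(y)}{k}\Bigr\|_{\Q_p\{y\}}\ \leq\ p^{\sum_{s>r}\lfloor k/p^s\rfloor}\ \leq\ p^{\,k/((p-1)p^r)}.
\]
Combining with $\|(T_1-1)^k\|_{C_n}=p^{-k/n}$, the $k$-th term has norm $\leq p^{\,k\bigl(1/((p-1)p^r)-1/n\bigr)}$, which tends to $0$ exactly when $(p-1)p^r>n$, i.e.\ when $r\geq r_n$. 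The same estimate applied to $T_1^{-\lambda(y)}$ yields the inverse, hence invertibility. With this corrected bound your argument goes through and the role of $r_n$ becomes transparent.
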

  


Ce lemme nous permet d'appliquer la construction dans \cite[\S 2.2]{WangI}, à la donnée $(\LA_{r,W},\rho^{\univ}_j )$; il en résulte une $\cO(W)$-représentation de Banach à droite $\D_{r,W,j}$ de $\I_0(p)$, qui est isomorphe à $\D_r(\Z_p,L)\hat{\otimes}\cO(W)$ comme $\cO(W)$-module, munie d'une application de spécialisation:
\[\mathrm{Sp}_{k,j}: \D_{r,W,j}\ra V_{k,j+2},\] 
donnée par la même formule dans le cas $\mathbf{Mes}^{\mathrm{bd}}_{\sW,j}$.


On note $\widehat{\mathbf{PD}}_r$ le sous $\Z_p$-module  de $\Q_p[[T]]$ des 
$\sum\limits_{n\in \N}a_n T^n/[\frac{n}{p^r}]!$, où les $a_n$ sont dans $\cO_L$. Le $\cO(W)$-module de Banach $\D_{r,W,j}$ est caractérisé complètement par la transformée d'Amice $\mu\mapsto \int_{\Z_p}(1+T)^x\mu$ et il est isomorphe à $\widehat{\mathbf{PD}}_r\hat{\otimes}_{\Z_p} \cO(W)$. 
On note $\D_{r,W,j}^+$ la boule unité de $\D_{r,W,j}$. Elle est stable sous l'action de $\I_0(p)$ par le lemme ci-dessus et on a un isomorphisme $\D_{r,W,j}^+\cong \widehat{\mathbf{PD}}_r\hat{\otimes}\cO(W)^+$, où $\cO(W)^+$ est la boule unité de $\cO(W)$. 
Ces représentations $\{\D_{r,W,j}\}_{r}$ forment un système projectif de représentations de Banach de $\I_0(p)$, qui sera utilisé dans la construction de la courbe de Hecke d'Ash-Stevens.
\subsection{Famille de systèmes d'Euler de Kato sur $\sW$ et ses variantes}
 Soit $K$ le sous-groupe compact de $\GL_2(\hat{\Z})$ défini comme suit:
on pose un sous-groupe de $\GL_2(\hat{\Z}^{]p[})$:
\[K^{]p[}= \{ (\begin{smallmatrix}a& b \\  c& d\end{smallmatrix})\in \GL_2(\hat{\Z}^{]p[}): c\equiv d-1\equiv 0\mod N \} ;   \]
et on note $K=K^{]p[}\times\I_0(p)$.
Alors son image sous l'application de détermiant est $\hat{\Z}^*$ et $K\cap \SL_2(\Z)= \Gamma(N;p)$. On note $\bar{\Gamma}(N;p)$ le complété profini de $\Gamma(N;p)$. 
La fonction caractéristique $\phi_K$ de $K$ est une fonction localement constante sur $\bM_2^{(p)}$ invariante sous l'action de $\tilde{K}$, où $\tilde{K}$ est l'image inverse de $K$ dans $\Pi_\Q$. 

Dans ce paragraphe, on améliore la construction de système d'Euler de Kato sur l'espace des poids dans $\cite{WangI}$, de sorte que, si $c,d\in \hat{\Z}^*$, on construct un élément 
\[z_{\Kato,c,d,K}(\nu_j)\in  \rH^2(\tilde{K}, \fD_0(\bM_2^{(p)},\mathbf{Mes}_{\sW,j}^{\mathrm{bd}}(2)) ).\]
\subsubsection*{L'opérateur $A_{c,d}$ revisité}
À partir des unités de Siegel, on construit (cf. \cite{PC1},\cite{Wang}) une distribution
alg\'ebrique $z_{\textrm{Siegel}}$ sur $\A_f^2-(0,0)$ \`a
valeurs dans $\Q\otimes(\cM(\bar{\Q})[\frac{1}{\Delta}])^{*}$, o\`u $\Delta=q\prod_{n\geq 1}(1-q^n)^{24}$ est la forme modulaire de poids $12$. La
distribution $z_{\textrm{Siegel}}$ est invariante sous l'action du groupe
$\Pi_{\Q}$.

La th\'eorie de Kummer $p$-adique nous fournit un \'el\'ement
\[z_{\textrm{Siegel}}^{(p)}\in \rH^1(\Pi_{\Q},\fD_{\alg}(\A_f^{2}-(0,0),\Q_p(1))).\]
Par cup-produit et
restriction \`a $\bM_2(\A_f)^{(p)}\subset(\A_f^{2}-(0,0))^2$,
 on obtient une distribution alg\'ebrique:
\[z_{\Kato}\in\rH^2(\Pi_{\Q},\fD_{\alg}(\bM_2(\A_f)^{(p)},\Q_p(2))).\]

L'espace topologique localement profini $\bM_2(\A_f)^{(p)}$ est muni de deux actions de $\GL_2(\hat{\Z})$ à gauche et à droite respectivement. Ceci induit deux actions de $\GL_2(\hat{\Z})$ sur $\fD_{\alg}(\bM_2(\A_f)^{(p)},\Q_p(2)) $ et l'action de $\Pi_\Q$ à droite est à traver son quotient $\GL_2(\hat{\Z})$ à droite. 
Alors, le $\Q_p$-espace $\rH^2(\Pi_{\Q},\fD_{\alg}(\bM_2(\A_f)^{(p)},\Q_p(2)))$ est un $\Z_p[\T(\hat{\Z})]$-module à gauche, où $\T$ est le tore du groupe algébrique $\mathbf{GL}_2$.
Soient $c,d\in \hat{\Z}^*$. On note $A_{c,d}$ l'élément dans $\Z_p[\T(\hat{\Z})]$ qui correspond à la distribution algébrique $(c_p^2\delta_1-\delta_{(\begin{smallmatrix}c^{-1}&0\\ 0& 1\end{smallmatrix})})(d_p^2\delta_1-\delta_{(\begin{smallmatrix}1&0\\ 0& d^{-1}\end{smallmatrix})})$ sur $\T(\hat{\Z})$, où $\delta_x$ désigne la masse de Dirac en $x$. On note 
\[z_{\Kato,c,d}= A_{c,d}z_{\Kato}\in \rH^2(\Pi_\Q, \fD_{\alg}(\bM_2(\A_f)^{(p)}, \Q_p(2) )).\]
\begin{lemma}On a $z_{\Kato,c,d}\in \rH^2(\Pi_\Q, \fD_{\alg}(\bM_2(\A_f)^{(p)}, \Z_p(2) ))$, ce qui nous permet de le voir comme un élément dans  $\rH^2(\Pi_\Q, \fD_{0}(\bM_2(\A_f)^{(p)}, \Z_p(2) ))$.
\end{lemma}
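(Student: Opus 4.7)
La stratégie est d'utiliser l'astuce classique de Kato du facteur $(c^2-\sigma_c)$ qui sert à éliminer les dénominateurs des unités de Siegel. Rappelons que $z_{\Siegel}$ prend a priori ses valeurs dans $\Q\otimes(\cM(\ol{\Q})[\frac{1}{\Delta}])^{*}$ — le tenseur avec $\Q$ est nécessaire car les unités de Siegel classiques ont un $q$-développement dont l'exposant initial (le terme de type $B_2$, le deuxième nombre de Bernoulli) est un nombre rationnel, et non un entier. C'est précisément cette obstruction à l'intégralité que l'on va faire disparaître grâce à l'opérateur $A_{c,d}$.

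La première étape est de montrer que, pour $c\in\hat{\Z}^*$, la distribution modifiée
\[ {}_c z_{\Siegel} := (c_p^2\delta_1-\delta_{(\begin{smallmatrix}c^{-1}&0\\ 0&1\end{smallmatrix})})\cdot z_{\Siegel} \]
prend ses valeurs dans $(\cM(\ol{\Q})[\frac{1}{\Delta}])^{*}$, \emph{sans} le facteur $\Q\otimes$. C'est le lemme classique d'intégralité des unités de Siegel modifiées ${}_cg_{a,b}$ (voir par exemple \cite[\S 1]{KK} ou la version reprise par l'auteur dans \cite{Wang}) : le $q$-développement des fonctions ${}_cg_{a,b}$ est une série formelle à coefficients algébriques entiers avec des exposants entiers, et elle définit donc une véritable unité modulaire. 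Un raisonnement symétrique vaut pour l'opérateur $A_d$ agissant par la droite.

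La deuxième étape consiste à relever cette intégralité au niveau cohomologique via la théorie de Kummer $p$-adique. Puisque ${}_cz_{\Siegel}$ est à valeurs dans $(\cM(\ol{\Q})[\frac{1}{\Delta}])^{*}$ entières, on obtient directement
\[ A_c\cdot z_{\Siegel}^{(p)} \in \rH^1(\Pi_{\Q}, \fD_{\alg}(\A_f^{2}-(0,0), \Z_p(1))), \]
et symétriquement pour $A_d$. Comme le cup-produit et la restriction à $\bM_2(\A_f)^{(p)}$ préservent l'intégralité des coefficients, et comme $A_{c,d}=A_c\cdot A_d$ se factorise en appliquant séparément chaque opérateur à l'un des deux facteurs du cup-produit, on en déduit
\[ z_{\Kato,c,d} = A_{c,d}\cdot z_{\Kato} \in \rH^2(\Pi_{\Q}, \fD_{\alg}(\bM_2(\A_f)^{(p)}, \Z_p(2))). \]

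Pour la deuxième assertion, on observe que l'espace $\bM_2(\A_f)^{(p)}=\GL_2(\Z_p)\times\bM_2(\Q\otimes\hat{\Z}^{]p[})$ est réunion croissante de parties compactes, sur lesquelles une distribution algébrique à valeurs dans $\Z_p(2)$ est, par densité des fonctions localement constantes dans les fonctions continues sur un pro-$p$-espace compact et par complétude de $\Z_p(2)$, la même chose qu'une mesure (i.e.\ une forme linéaire continue sur les fonctions continues) ; cela permet de voir $z_{\Kato,c,d}$ comme un élément de $\rH^2(\Pi_\Q, \fD_0(\bM_2(\A_f)^{(p)}, \Z_p(2)))$. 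L'obstacle principal est l'étape d'intégralité des unités ${}_cg_{a,b}$ ; c'est une vérification soigneuse au niveau des $q$-développements, essentiellement classique, qui repose sur l'identité $c^2-\sigma_c$ tuant la partie « polaire rationnelle » $\tfrac{1}{12}B_2(\{\tfrac{a}{N}\})$ des exposants, le reste suivant par fonctorialité et compatibilité de la théorie de Kummer $p$-adique avec la structure entière.
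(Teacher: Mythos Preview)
Your argument is correct and is precisely the classical one. The paper's own proof here is not self-contained: it simply observes that the construction of $z_{\Kato,c,d}$ coincides with the one in \cite[\S 2.3.2]{Wang} and refers there for the details. What you have written is essentially a reconstruction of that argument --- the $(c^2-\sigma_c)$ modification kills the rational exponent $\tfrac{1}{2}B_2(\{a/N\})$ in the $q$-expansion of the Siegel units so that ${}_cg_{a,b}$ becomes a genuine unit, Kummer theory then lands in $\Z_p(1)$-coefficients, and cup-product plus restriction preserve integrality --- so there is no real methodological difference, only that you have unpacked what the paper leaves as a citation.
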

\begin{proof} Il est facile à vérifier que la construction de $z_{\Kato,c,d}$ ci-dessus coïncide avec celle dans $\cite[\S 2.3.2]{Wang}$ et la démonstration du lemme se trouve dans $\cite[\S 2.3.2]{Wang}$.
\end{proof}

 
 On note $\bM_2^{(p)}=\bM_2(\Q\otimes\hat{\Z}^{]p[})\times \I_0(p)$. Comme la fonction caractéristique $\phi_K$ de $K$ est une fonction localement constante sur $\bM_2^{(p)}$ invariante sous l'action de $\tilde{K}$,  une torsion à la Soulé (cf. \cite[\S 2.3]{WangI}) nous
fournit enfin des éléments de Kato
 \begin{equation*}
 \begin{split}
 &z_{\Kato,c,d,K}(k,j)=\phi_K\cdot(e_1^{k-2}t^{-j})*x_p\otimes z_{\Kato,c,d} \in \rH^2(\tilde{K}, \fD_0(\bM_2^{(p)},V_{k,j}) ) \\
 (\text{resp.} &z_{\Kato,c,d,K}(\nu_j)=\phi_K\cdot(\nu_j*x_p)\otimes z_{\Kato,c,d} \in \rH^2(\tilde{K}, \fD_0(\bM_2^{(p)},\mathbf{Mes}_{\sW,j}^{\mathrm{bd}}(2)) )).
\end{split}
 \end{equation*}


La proposition suivante est une conséquence directe du \cite[théorème 2.19]{WangI}:
\begin{prop} Si $1\leq j\in\N$, alors pour tout entier $k\geq 1+j$, on a 
\[\mathrm{Sp}_{k,j}(z_{\Kato,c,d,K}(\nu_j))=z_{\Kato,c,d,K}(k,j) .\]
\end{prop}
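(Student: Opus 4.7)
The plan is to deduce the identity directly from the interpolation property of $\nu_j$ recorded in \S 3.2 (namely $\mathrm{Sp}_{k,j}(\nu_j) = e_1^{k-2}t^{-j}$) together with the functoriality of the cohomological construction, essentially by invoking \cite[théorème 2.19]{WangI} which establishes the analogous specialization formula in the weight-space setting, and then verifying that the refinements carried out here (the Soulé twist by $\phi_K$ and the cup product with $z_{\Kato,c,d}$) commute with specialization.

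First I would observe that the two elements $z_{\Kato,c,d,K}(\nu_j)$ and $z_{\Kato,c,d,K}(k,j)$ are obtained from the \emph{same} cohomological recipe, differing only in the choice of distinguished vector in the coefficient module: one uses $\nu_j \in \mathbf{Mes}^{\mathrm{bd}}_{\sW,j}$, the other uses $e_1^{k-2}t^{-j} \in V_{k,j+2}$. Since the specialization map
\[\mathrm{Sp}_{k,j}\colon \mathbf{Mes}^{\mathrm{bd}}_{\sW,j} \longrightarrow V_{k,j+2}\]
is $\Q_p$-linear and $\I_0(p)$-equivariant by construction, it induces a morphism of $\tilde{K}$-modules $\fD_0(\bM_2^{(p)}, \mathbf{Mes}^{\mathrm{bd}}_{\sW,j}(2)) \to \fD_0(\bM_2^{(p)}, V_{k,j+2}(2))$ and hence a well-defined map on $\rH^2(\tilde K,-)$. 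This induced map commutes with the right multiplication by $x_p$, with multiplication by the $\tilde K$-invariant function $\phi_K$, and with cup product by $z_{\Kato,c,d}$ (which lives in a complex with constant coefficients $\Z_p(2)$, so this follows from functoriality of cup product).

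Consequently, the identity reduces to the pointwise statement
\[\mathrm{Sp}_{k,j}(\nu_j * x_p) = (e_1^{k-2}t^{-j})*x_p \quad \text{in } V_{k,j+2},\]
and since $\mathrm{Sp}_{k,j}$ is $\I_0(p)$-equivariant, this in turn reduces to $\mathrm{Sp}_{k,j}(\nu_j) = e_1^{k-2}t^{-j}$. This equality is built into the definition: by construction of $\pi_{k,j}$, one has $\mathrm{Sp}_{k,j}(\delta_0) = \int_{\Z_p}(e_1+ze_2)^{k-2}t^{-j}\,\delta_0 = e_1^{k-2}t^{-j}$, and this is precisely the vector that interpolates $\nu_j$ at weight $k$.

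The numerical hypothesis $k\geq 1+j$ is the condition under which $V_{k,j+2}$ is the correct algebraic representation of $\I_0(p)$ for the specialization to take its image in the target; it is exactly the range covered by \cite[théorème 2.19]{WangI}, to which the above bookkeeping delegates the remaining verification. Since all objects involved are obtained from the weight-space construction of \cite{WangI} by multiplying by $\phi_K$ and cup-producting with $z_{\Kato,c,d}$ (operations manifestly compatible with $\mathrm{Sp}_{k,j}$), there is no genuine obstacle: the proof is a two-line derivation from the cited theorem.
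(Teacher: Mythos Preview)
Your proposal is correct and follows essentially the same approach as the paper, which states only that the proposition is a direct consequence of \cite[théorème 2.19]{WangI}. You have in fact given more detail than the paper does, spelling out explicitly why the Soulé twist by $\phi_K$ and the cup product with $z_{\Kato,c,d}$ commute with $\mathrm{Sp}_{k,j}$, which is exactly the bookkeeping needed to reduce to the cited theorem.
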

  \subsection{Projecteur d'Iwasawa}\label{stra_Col}

Soit $\hat{\Gamma}$ un sous-groupe de congruence de $K$ tel que l'application $\xymatrix{\hat{\Gamma}\ar[r]^{\det}&\hat{\Z}^*}$ est surjective, et on note $\tilde{\Gamma}$ l'image inverse de $\hat{\Gamma}$ dans $\Pi_\Q$ via l'application $\Pi_\Q\ra \GL_2(\hat{\Z})$ et $\bar{\Gamma}$ son intersection avec $\Pi_{\bar{\Q}}$, qui est le complété profini de $\Gamma=\hat{\Gamma}\cap\SL_2(\Z)$. On a une suite exacte de groupes:
\[1\ra \bar{\Gamma}\ra \tilde{\Gamma}\ra \cG_\Q\ra 1.\]
Soit $M\in \N$ tel que $\Gamma(M)\subset \Gamma$. 
On note $\Sigma=\{l\in \cP, l\mid Mp\}$ et $\bar{\Q}_\Sigma$ l'extension maximale de $\Q$ non ramifiée en dehors de $\Sigma$. On note $\cG_{\Q,\Sigma}$ le group de Galois de $\bar{\Q}_\Sigma$ sur $\Q$.

Soit $A$ une $\Z_p$-algèbre locale complete noethérienne de corps résiduel $\F_p$ et soit $\m$ son idéal maximal. Soit $V$ un $A\otimes_{\Z_p}\Q_p$-module muni d'une action de $\tilde{\Gamma}$ agissant à travers son quotient dans $\GL_2(\Z_p)$, tel que, il existe un $A$-sous-module  $V^+$ de rang fini stable sous l'action de $\tilde{\Gamma}$ tel que 
\[V^+=\plim_lV^+/\m^l \text{ et } V=V^+\otimes_{\Z_p} \Q_p.\] 

Soit $z\in \rH^2(\tilde{\Gamma}_0(p), \fD_0(\bM_2^{(p)},V))$. Si $\phi$ est une fonction localement constante sur 
$\bM_2(\hat{\Z})^{(p)}$ à valeurs dans $\Z$ invariante sous l'action de $\tilde{\Gamma}$, on définit une mesure $z_{\phi,0}\in \rH^2(\tilde{\Gamma}_0(p), \fD_0(\Z_p^*,V) )$ par 
\[ \int_{\Z_p^*}\psi z_{\phi,0}=\int_{\bM_2(\hat{\Z})^{(p)}}\psi(\det x_p) \phi(x) z, \text{ si } \psi \in\cC^0(\Z_p^*,\Z_p) . \]
On note $G_n=\Z_p^*/(1+p^n\Z_p)$.
La suite spectrale de Grothendieck pour la cohomologie continue \cite[(3.4) corollary]{UJ} nous donne une application:
\[ \rH^2(\tilde{\Gamma}, \fD_0(\Z_p^*,V))\ra  (\rH^1(\cG_\Q, \plim_{n,l}\rH^1(\bar{\Gamma},  \Z_p[G_n]\otimes V^+/\m^l))\otimes\Q_p, \]
 qui se factorise par $(\rH^1(\cG_{\Q,\Sigma}, \plim_{n,l}\rH^1(\bar{\Gamma},  \Z_p[G_n]\otimes V^+/\m^l))\otimes\Q_p.$
\begin{lemma} On a un isomorphisme:
\[(\rH^1(\cG_{\Q,\Sigma}, \plim_{n,l}\rH^1(\bar{\Gamma},  \Z_p[G_n]\otimes V^+/\m^l))\otimes\Q_p\cong\rH^1_{\Iw}(\cG_{\Q,\Sigma}, \rH^1(\bar{\Gamma}, V)),\]
où $\rH^1_{\Iw}(\cG_{\Q,\Sigma}, \rH^1(\bar{\Gamma}, V))=\rH^1(\cG_{\Q,\Sigma}, \fD_0(\Z_p^*,\rH^1(\bar{\Gamma}, V)))$.
\end{lemma}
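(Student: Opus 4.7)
Le plan est d'intervertir la limite projective $\plim_{n,l}$ avec les deux couches de cohomologie — d'abord avec $\rH^1(\bar{\Gamma},-)$, puis avec $\rH^1(\cG_{\Q,\Sigma},-)$ — pour identifier le membre de gauche avec la cohomologie continue à coefficients dans $\fD_0(\Z_p^*,\rH^1(\bar{\Gamma},V))$.

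\textbf{Première étape : finitude.} Comme $A$ est noethérienne locale complète de corps résiduel $\F_p$, les quotients $A/\m^l$ sont finis, et puisque $V^+$ est de type fini sur $A$, chaque $V^+/\m^l$ est fini. Par suite $\Z_p[G_n]\otimes V^+/\m^l$ est un groupe abélien fini, et $\bar{\Gamma}$ étant un groupe profini topologiquement de type fini, les groupes $\rH^i(\bar{\Gamma},\Z_p[G_n]\otimes V^+/\m^l)$ sont finis en tout degré. Ceci fournit la condition de Mittag-Leffler pour tous les systèmes projectifs qui interviendront.

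\textbf{Deuxième étape : commutation avec la cohomologie de $\bar{\Gamma}$.} Par la théorie de la cohomologie continue à la Jannsen (dans l'esprit de la référence \cite{UJ} invoquée plus haut), on peut échanger $\plim_{n,l}$ avec $\rH^1(\bar{\Gamma},-)$ dès que le terme $\rH^2$ du système satisfait Mittag-Leffler, ce qui est le cas ici. On identifie ensuite
\[\plim_{n,l}\bigl(\Z_p[G_n]\otimes V^+/\m^l\bigr) \cong \fD_0(\Z_p^*,V^+),\]
en utilisant $\plim_n \Z_p[G_n] \cong \Lambda = \fD_0(\Z_p^*,\Z_p)$ et $\plim_l V^+/\m^l = V^+$. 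Un argument analogue, combiné avec le fait que $\Z_p[G_n]$ est libre de rang fini sur $\Z_p$, fournit l'isomorphisme
\[\rH^1(\bar{\Gamma}, \fD_0(\Z_p^*,V^+)) \cong \fD_0(\Z_p^*, \rH^1(\bar{\Gamma},V^+)).\]

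\textbf{Troisième étape : inversion de $p$ et cohomologie galoisienne.} En tensorisant par $\Q_p$ on passe de $V^+$ à $V$, et en appliquant le même principe de commutation à la cohomologie continue de $\cG_{\Q,\Sigma}$ (lui aussi topologiquement de type fini par restriction de la ramification à $\Sigma$), on obtient
\[\bigl(\rH^1(\cG_{\Q,\Sigma}, \plim_{n,l}\rH^1(\bar{\Gamma},\Z_p[G_n]\otimes V^+/\m^l))\bigr)\otimes \Q_p \cong \rH^1(\cG_{\Q,\Sigma}, \fD_0(\Z_p^*,\rH^1(\bar{\Gamma},V))),\]
qui est par définition $\rH^1_{\Iw}(\cG_{\Q,\Sigma},\rH^1(\bar{\Gamma},V))$. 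L'obstacle principal est la justification rigoureuse de chaque interversion de $\plim$ et $\rH^1$, qui repose sur la finitude établie à la première étape et sur le contrôle des termes $\rH^2$ via Mittag-Leffler. Il faut également s'assurer que l'inversion de $p$ est compatible avec ces identifications, ce qui demande que $\rH^1(\bar{\Gamma},V^+)$ reste un $\Z_p$-module compact, voire de type fini, pour que la construction de $\fD_0(\Z_p^*,-)$ commute à la tensorisation par $\Q_p$.
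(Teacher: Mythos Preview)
Your argument is correct in outline, but it takes a heavier route than the paper's and obscures the one observation that makes the lemma almost immediate. The paper begins by noting that $\bar{\Gamma}$ acts \emph{trivially} on $\Z_p^*$ (since $\bar{\Gamma}\subset\Pi_{\bar{\Q}}$ and the action on $\Z_p^*$ factors through the cyclotomic character), so $\Z_p[G_n]$ is a trivial finite free $\Z_p[\bar{\Gamma}]$-module and one has directly
\[
\rH^1(\bar{\Gamma},\, \Z_p[G_n]\otimes V^+/\m^l)\;\cong\;\Z_p[G_n]\otimes \rH^1(\bar{\Gamma},\,V^+/\m^l).
\]
This separates the two limits cleanly: the $\plim_n$ over the $\Z_p[G_n]$-factor gives $\Lambda=\fD_0(\Z_p^*,\Z_p)$ without any cohomological argument, and only the $\plim_l$ over $\rH^1(\bar{\Gamma},V^+/\m^l)$ requires Mittag-Leffler (finiteness) to be identified with $\rH^1(\bar{\Gamma},V^+)$. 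You invoke this trivial-action fact only implicitly, behind ``$\Z_p[G_n]$ est libre de rang fini sur $\Z_p$'', but freeness alone would not suffice if the action were nontrivial; making it explicit removes the need for the general Jannsen-type commutation of $\plim_{n,l}$ with $\rH^1(\bar{\Gamma},-)$ in your second step.

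There is also a small confusion in your third step: no commutation of $\plim$ with $\rH^1(\cG_{\Q,\Sigma},-)$ is needed at all, since the $\plim_{n,l}$ already sits inside the coefficients on the left-hand side. Once the coefficient module is identified with $\fD_0(\Z_p^*,\rH^1(\bar{\Gamma},V^+))$, the only remaining step is to commute $-\otimes_{\Z_p}\Q_p$ past $\rH^1(\cG_{\Q,\Sigma},-)$, which is just flatness of $\Q_p$ over $\Z_p$.
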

\begin{proof}Comme $\bar{\Gamma}$ agit trivalement sur $\Z_p^*$, on a un isomorphisme:
\[ \rH^1(\bar{\Gamma},   \Z_p[G_n]\otimes V^+/\m^l ) )\cong  \Z_p[G_n]\otimes\rH^1(\bar{\Gamma},V^+/\m^l) .\]
Comme la cohomologie de $\bar{\Gamma}$ dans un module fini est un groupe fini, le système projectif $\{\rH^1(\bar{\Gamma},V^+/\m^l)\}_{l}$ est un système de Mittag-Leffler. Ceci implique que 
\[\plim_{l}\rH^1(\bar{\Gamma},  V^+/\m^l ) \cong \rH^1(\bar{\Gamma}, V^+).\] Ceci nous permet de conclure la preuve.

\end{proof}

En composant les applications obtenus ci-dessus,
on déduit d'un morphisme, appelé le projecteur d'Iwasawa,
  \[ \pi_{\phi,V}: \rH^2(\tilde{\Gamma}, \fD_0(\bM_2^{(p)},V) )\ra \rH^1_{\Iw}(\cG_{\Q,\Sigma}, \rH^1(\bar{\Gamma}, V)).\]
La construction ci-dessus applique en particulier aux représentations $\mathbf{Mes}^{\mathrm{bd}}_{\sW,j}(2)$ et $V_{k,j}$ et aux fonctions localement constantes sur $\bM_2^{(p)}$ invariante sous l'action de $\tilde{K}$. 

 On a les groupes $\tilde{K}$, $\bar{K}$ associés à $K$ comme ci-dessus. On fixe $\Sigma=\{l\in \cP, l\mid Np\}$ dans la suite.
 Soit $\phi$ une fonction localement constante sur 
$\bM_2(\hat{\Z})^{(p)}$ à valeurs dans $\Z$ invariante sous l'action de $\tilde{K}$.
 On note les projecteurs d'Iwasawa associés à $\mathbf{Mes}^{\mathrm{bd}}_{\sW,j}(2)$ et sur $V_{k,j}$ par $\pi_{\phi,\sW,j}$ et $\pi_{\phi,k,j}$ respectivement.  

L'application de spécialisation $\mathrm{Sp}_{k,j}:\mathbf{Mes}^{\mathrm{bd}}_{\sW,j}(2)\ra V_{k,j}$ induit une application de spécialisation 
\[\rH^1_{\Iw}(\cG_{\Q,\Sigma},  \rH^1(\bar{\Gamma}(N;p), \mathbf{Mes}^{\mathrm{bd}}_{\sW,j}(2)))\ra  \rH^1_{\Iw}(\cG_{\Q,\Sigma},  \rH^1(\bar{\Gamma}(N;p), V_{k,j}))\] notée encore par $\mathrm{Sp}_{k,j}$. Le théorème suivant est un résumé de la construction ci-dessus:
\begin{theo}\label{projection}
(1) On a le diagramme commutatif suivant:
\[\xymatrix{\rH^2(\tilde{K}, \fD_0(\bM_2^{(p)},\mathbf{Mes}^{\mathrm{bd}}_{\sW,j}(2) ) )\ar[d]^{\mathrm{Sp}_{k,j}}\ar[rr]^{\pi_{\phi,\sW,j}}& & \rH^1_{\Iw}(\cG_{\Q,\Sigma},  \rH^1(\bar{\Gamma}(N;p), \mathbf{Mes}^{\mathrm{bd}}_{\sW,j}(2)))\ar[d]^{\mathrm{Sp}_{k,j}}\\  \rH^2(\tilde{K}, \fD_0(\bM_2^{(p)},V_{k,j}))\ar[rr]^{\pi_{\phi,k,j}}& & \rH^1_{\Iw}(\cG_{\Q,\Sigma},  \rH^1(\bar{\Gamma}(N;p), V_{k,j}))}.\]
(2) On note $z_{\Kato,c,d,\phi}(\sW,j)$ (resp. $z_{\Kato,c,d,\phi}(k,j)$) l'image de $z_{\Kato,c,d,K}(\nu_j)$ (resp. $z_{\Kato, c,d,K}(k,j)$) sous le projecteur d'Iwasawa $\pi_{\phi,\sW,j}$ (resp. $\pi_{\phi,k,j}$). Alors, on a 
\[\mathrm{Sp}_{k,j}(z_{\Kato,c,d,\phi}(\sW,j))= z_{\Kato,c,d,\phi}(k,j).\]
\end{theo}

\section{Construction de la famille de systèmes d'Euler de Kato sur la courbe de Hecke cuspidale}\label{const5}
On a un élément de Kato $z_{\Kato, c,d, \phi}(\sW,j)\in \rH^1_{\Iw}(\cG_{\Q,\Sigma},  \rH^1(\bar{\Gamma}(N;p), \mathbf{Mes}^{\mathrm{bd}}_{\sW,j}(2)))$ et on veut en déduire un élément de Kato sur $\fC^0$. Le problème est de fabriquer une famille de représentations galoisiennes au-dessus de $\fC^0$ à partir de $\rH^1(\bar{\Gamma}(N;p), \mathbf{Mes}^{\mathrm{bd}}_{\sW,j}(2))$. Pour effectuer ça, on suit de près la méthode d'Ash-Stevens \cite{AS}, et Bellaïche (\cite{Be}, \cite{Be1}). La stratégie est la suivante:
\begin{itemize}
\item[$\bullet$] on étend les coefficients $ \mathbf{Mes}^{\mathrm{bd}}_{\sW,j}$ en $ \D_{r,W,j}$ et on descend du groupe profini $\bar{\Gamma}(N;p)$ au groupe discret $\Gamma(N;p)$;
\item[$\bullet$] on utilise la décomposition par les pentes de l'opérateur $U_p$ pour construire un faisceau sans torsion au-dessus le morceau local de $\fC^0$ muni d'une action continue de $\cG_\Q$ et enfin on globalise par un processus standard \cite[\S 5]{KB}, vérifié par Bellaïche \cite[chapter II]{Be1} pour notre cas. 
\end{itemize}



\subsection{Cohomologie du groupe profini et cohomologie du groupe discret}

Rappelons que, pour tout ouvert affinoïde $W=\Spm R \subset\sW$, il existe un nombre $r_W$, tel que, pour tout $r\geq r_W$, on a une $\cO(W)$-représentation de Banach $\D_{r,W,j}$ (cf. \S \ref{system}) de $\I_0(p)$. 
Pour tout $r\geq r_W$, on a un morphisme naturel de $\cO^{\mathrm{bd}}(\sW)$-modules $\I_0(p)$-équivariant
\[\mathbf{Mes}^\mathrm{bd}_{\sW,j}\ra \mathbf{Mes}^{\mathrm{bd}}_{\sW,j}\otimes_{\cO^{\mathrm{bd}}(\sW)}R\ra \D_{r,W,j}.\]
Ceci induit un morphisme 
\[  \rH^1_{\Iw}(\cG_{\Q,S},  \rH^1(\bar{\Gamma}(N;p), \mathbf{Mes}^{\mathrm{bd}}_{\sW,j}(2)))\ra  \rH^1_{\Iw}(\cG_{\Q,S},  \rH^1(\bar{\Gamma}(N;p), \D_{r,W,j}(2))).\]
Si $k\in W\cap\N $, alors l'application de spécialisation $\mathrm{Sp}_{k,j}:\D_{r,W,j}(2)\ra V_{k,j}$ induit un morphisme de spécialisation
\[\mathrm{Sp}_{k,j}:\rH^1_{\Iw}(\cG_{\Q,S},  \rH^1(\bar{\Gamma}(N;p), \D_{r,W,j}(2)))\ra  \rH^1_{\Iw}(\cG_{\Q,S},  \rH^1(\bar{\Gamma}(N;p), V_{k,j})), \]
tel que, le diagramme suivant est commutatif:
\[\xymatrix{ \rH^1_{\Iw}(\cG_{\Q,S},  \rH^1(\bar{\Gamma}(N;p), \mathbf{Mes}^{\mathrm{bd}}_{\sW,j}(2)))\ar[r]\ar[dr]^{\mathrm{Sp}_{k,j}}&\rH^1_{\Iw}(\cG_{\Q,S},  \rH^1(\bar{\Gamma}(N;p), \D_{r,W,j}(2)))\ar[d]^{\mathrm{Sp}_{k,j}}\\  & \rH^1_{\Iw}(\cG_{\Q,S},  \rH^1(\bar{\Gamma}(N;p), V_{k,j}))}.\]

La proposition suivante montre que le $\cO(W)$-module $\rH^1(\Gamma(N;p), \D_{r,W,j})$ (resp. le $\Q_p$-espace $\rH^1(\Gamma(N;p),\D_{r,k,j}))$ est une $W$-représentation (resp. $\Q_p$-représentation) du groupe de Galois $\cG_{\Q,S}$.

\begin{prop}\label{betti-etale}On a les isomorphismes de modules de Hecke suivant: \\
$(1)$ $\rH^1(\Gamma(N;p), \D_{r,W,j})\cong (\plim_{n}\rH^1(\bar{\Gamma}(N;p), \D_{r,W,j}^+/p^n))\otimes\Q_p\cong\rH^1(\bar{\Gamma}(N;p), \D_{r,W,j})$, \\
$(2)$ $\rH^1(\Gamma(N;p), \D_{r,k,j})\cong \rH^1(\bar{\Gamma}(N;p), \D_{r,k,j}).$
\end{prop}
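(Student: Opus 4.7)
\medskip

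\noindent\textbf{Plan de démonstration.} Le plan est de ramener les deux isomorphismes à la comparaison classique, pour des coefficients de $p$-torsion, entre la cohomologie du groupe discret $\Gamma(N;p)$ et la cohomologie continue de son complété profini $\bar{\Gamma}(N;p)$, puis de passer à la limite projective $p$-adique.

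Pour (1), je procède en trois étapes. \emph{Étape 1 (entiers $p$-adiques).} Puisque $\D_{r,W,j}^+$ est $p$-adiquement séparé, complet et sans $p$-torsion, et que $\D_{r,W,j}=\D_{r,W,j}^+\otimes_{\Z_p}\Q_p$, la platitude de $\Q_p$ sur $\Z_p$ ramène le calcul de chacun des trois termes de (1) à celui de la cohomologie correspondante à coefficients dans $\D_{r,W,j}^+$. \emph{Étape 2 (Mittag-Leffler).} Comme $\Gamma(N;p)$ est de type fini (étant d'indice fini dans $\SL_2(\Z)$) et possède un sous-groupe libre d'indice fini, le système projectif $\{\rH^1(\Gamma(N;p), \D_{r,W,j}^+/p^n)\}_n$ est de Mittag-Leffler, d'où
\[
\rH^1(\Gamma(N;p),\D_{r,W,j}^+)\cong \plim_n \rH^1(\Gamma(N;p),\D_{r,W,j}^+/p^n),
\]
et un énoncé analogue pour $\bar{\Gamma}(N;p)$ en cohomologie continue. \emph{Étape 3 (comparaison profini / discret).} Il reste à établir, pour chaque $n\geq 1$, que la flèche naturelle
\[
\rH^1(\Gamma(N;p),\D_{r,W,j}^+/p^n) \longrightarrow \rH^1(\bar{\Gamma}(N;p),\D_{r,W,j}^+/p^n)
\]
est un isomorphisme. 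L'injectivité résulte de la densité de $\Gamma(N;p)$ dans $\bar{\Gamma}(N;p)$ (un cocycle continu nul sur un sous-groupe dense est nul), et la surjectivité provient du fait que l'action de $\Gamma(N;p)$ sur $\D_{r,W,j}^+/p^n$ se factorise par l'action continue de $\I_0(p)$ sur $\D_{r,W,j}^+$ pour la topologie $p$-adique (cf.\ lemme~\ref{1-cocycle}); tout cocycle défini sur le groupe discret s'étend alors par continuité au complété profini.

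L'énoncé (2) se déduit immédiatement du même raisonnement en spécialisant $W$ au point $k$, i.e.\ en remplaçant $\cO(W)$ par le corps résiduel $L$ via l'application $\mathrm{Ev}_k$. La compatibilité avec les opérateurs de Hecke de toutes les flèches intervenant résulte de la fonctorialité de chaque étape, ce qui fournit bien les isomorphismes de modules de Hecke souhaités. L'obstacle principal est l'Étape 3 de (1): le module $\D_{r,W,j}^+/p^n$ n'est pas fini (seulement discret), de sorte que l'argument usuel pour les modules finis ne s'applique pas directement; c'est la continuité de l'action $\I_0(p)$-équivariante sur le Banach ambiant $\D_{r,W,j}$ qui fournit la topologie permettant le contrôle du passage à la complétion profinie.
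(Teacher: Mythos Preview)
Your Étape~2 addresses the wrong obstruction. The Milnor exact sequence for the system $\{\D_{r,W,j}^+/p^n\}_n$ reads
\[
0\longrightarrow \plim\nolimits^{1}_{n}\rH^0(\bar\Gamma,\D_{r,W,j}^+/p^n)\longrightarrow \rH^1(\bar\Gamma,\D_{r,W,j}^+)\longrightarrow \plim_{n}\rH^1(\bar\Gamma,\D_{r,W,j}^+/p^n)\longrightarrow 0,
\]
so what is needed is the vanishing (after $\otimes\Q_p$) of $\plim^1\rH^0$, not Mittag--Leffler for $\rH^1$. Even when $\Gamma(N;p)$ is free, surjectivity of the transition maps on $\rH^1$ does not force $\plim^1\rH^0=0$: the modules $\D_{r,W,j}^+/p^n$ are infinite, and the invariants $(\D_{r,W,j}^+/p^n)^{\Gamma}$ form an inverse system whose behaviour you have not controlled. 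The paper handles precisely this point by an explicit computation: using the unipotent subgroup $U_m=\bigl(\begin{smallmatrix}1&p^m\Z_p\\0&1\end{smallmatrix}\bigr)\subset\I_0(p)$ and its infinitesimal generator $\partial_m$, one finds $\partial_m\cA_\mu=p^m\log(1+T)\,\cA_\mu$, which forces $(\D_{r,W,j}^+/p^n)^{U_m}$ --- hence each $\rH^0(\bar\Gamma,\D_{r,W,j}^+/p^n)$ --- to be killed by a fixed power of $p$ independent of $n$. Thus $\plim^1\rH^0$ is $p$-power torsion and dies after $\otimes\Q_p$. This is the missing idea.

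Two smaller issues in your Étape~3. First, the natural comparison map goes the other way: $\Gamma\hookrightarrow\bar\Gamma$ induces restriction $\rH^1(\bar\Gamma,M)\to\rH^1(\Gamma,M)$, not the arrow you wrote. Second, your surjectivity argument (``tout cocycle sur le groupe discret s'étend par continuité'') is not valid as stated: a $1$-cocycle on a dense subgroup need not extend continuously, since cocycles are not group homomorphisms. The paper instead invokes the exercise in Serre's \emph{Galois cohomology} (p.~15) asserting that for a good group $\Gamma$ --- free groups being the prototype --- the restriction map $\rH^i(\bar\Gamma,M)\to\rH^i(\Gamma,M)$ is an isomorphism on torsion discrete modules; this is what applies to $\D_{r,W,j}^+/p^n$.
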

\begin{proof} Le $(1)$ et $(2)$ se démontrent de la même manière. On ne démontre que le premier.
Rappelons que on a $\D_{r,W,j}\cong (\plim_n \D_{r,W,j}^+/p^n)\otimes\Q_p$. Pour simplifier la notation, on note $\Gamma=\Gamma(N;p)$ et $\bar{\Gamma}=\bar{\Gamma}(N;p)$. On a une suite exacte
\[0\ra (\plim^1_n\rH^0(\bar{\Gamma}, \D_{r,W,j}^+/p^n))\otimes\Q_p\ra \rH^1(\bar{\Gamma}, \D_{r,W,j})\ra  (\plim_{n}\rH^1(\bar{\Gamma}, \D_{r,W,j}^+/p^n))\otimes\Q_p\ra 0.\]
Pour démontrer l'isomorphisme $(\plim_{n}\rH^1(\bar{\Gamma}, \D_{r,W,j}^+/p^n))\otimes\Q_p\cong\rH^1(\bar{\Gamma}, \D_{r,W,j})$, on se ramène à montrer que $(\plim^1_n\rH^0(\bar{\Gamma}, \D_{r,W,j}^+/p^n))\otimes\Q_p=0$. Pour tout $m\geq 1$, on a 
\[\rH^0(\bar{\Gamma}, \D_{r,W,j}^+/p^n)=\rH^0(\Gamma(N;p),\D_{r,W,j}^+/p^n )\subset (\D_{r,W,j}^+/p^n)^{U_m}\] avec $U_m=(\begin{smallmatrix}1& p^m\Z_p\\ 0&1\end{smallmatrix})$ le sous-groupe unipotent de $\GL_2(\Z_p)$. 

Si $-v_p(\log(1+T))< m\leq n-v_p(\log(1+T))$, on note $\partial_m=\lim\limits_{l\ra \infty}\frac{u_m^{p^l}-1}{p^l}$ l'opérateur différentiel associé à un générateur $u_m$ du groupe $p$-adique analytique $U_m$. Un calcul immédiat montre que, pour tout $\mu\in \D_{r,W,j}$, on a 
\[\partial_m \cA_\mu= p^m\log (1+T) \cA_\mu, \text{ où } \cA_\mu \text{ est la transformé d'Amice de } \mu.\] 
Ceci implique que, si $-v_p(\log(1+T))< m< n-v_p(\log(1+T))$,  les éléments dans $ (\D_{r,W,j}^+/p^n)^{\partial_m=0}$ sont de $p^{m}$-torsion, pour tout les $n$.
On en déduit que  $(\plim^1_n\rH^0(\bar{\Gamma}, \D_{r,W,j}^+/p^n))\otimes\Q_p=0$. 

D'autre part, d'après \cite[p.15 Exercises]{Se}, on a un isomorphisme 
\[\rH^i(\Gamma, \D_{r,W,j}^+/p^n)\cong \rH^i(\bar{\Gamma}, \D_{r,W,j}^+/p^n).\] Ceci nous permet de conclure la preuve. 
 \end{proof}

\subsection{Projection sur $\fC^0$}
Soit $v$ un nombre réel. Il existe un ouvert $W\subset\sW$ et un $r_W>0$ tels que l'on peut appliquer la technique\footnote{Cette technique est introduite par Ash-Stevens \cite[proposition 4.1.2]{AS}. L'avantage est que si un complexe admet une telle décomposition, alors sa cohomologie aussi l'admet. } de la décomposition de pente $\leq v$ aux paires $(\rH^1(\Gamma(N;p), \D_{r_W,W,j}), U_p)$. 
En plus, il existe un ouvert $W$ adapté à $v$ (i.e. on a un isomorphisme de $\cO(W)$-modules $\rH^1(\Gamma(N;p), \D_{r,W,j})^{\leq v}\cong \rH^1(\Gamma(N;p), \D_{r,W,j})^{\leq v}$ pour tout $r,r'\geq r_W$). Fixons $W=\Spm R$ un ouvert affinoïde de $\sW$ adapté à $v$. 

Soit $\cH_N$ la $\Z$-algèbre commutative engendrée par les opérateurs de Hecke abstraits $T(l)$ pour tout premier $l$ avec $(l,Np)=1$, l'opérateur d'Atkin-Lehner $U_p$ et les opérateurs de diamant $\langle a\rangle$ pour $a\in (\Z/N\Z)^*$. 
Si $k\in\N$, on note $S_{k+2}(\Gamma(N;p))$ (resp.  $S_{k+2}^{\dag}(\Gamma(N;p)))$ l'espace des formes modulaires classiques cuspidales (resp. surconvergentes cuspidales) de niveau $\Gamma(N;p)$ et de poids $k+2$. Ils sont munis d'une action de $\cH_N$. Pour $M$ un des espaces des formes modulaires ci-dessus, on note $M^{< v}$ le sous-espace de pente\footnote{La pente d'une forme modulaire est la valuation $p$-adique de la valeur propre de $U_p$.} $<r$. On a le résultat de classicité (cf. \cite[corollary 2.6]{Be}) suivant :

\begin{prop}\label{classicité}On a un isomorphisme de Hecke modules:
\[S^\dag_{k+2}(\Gamma(N;p))^{< k+1}= S_{k+2}(\Gamma(N;p))^{<k+1}.\]
\end{prop}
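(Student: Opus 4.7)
The inclusion $S_{k+2}(\Gamma(N;p))^{<k+1} \subset S^\dag_{k+2}(\Gamma(N;p))^{<k+1}$ is immediate: classical cuspforms are a fortiori overconvergent, and the operator $U_p$ acts compatibly on both sides, so the slope decompositions induced by $U_p$ are compatible with the inclusion. The entire content of the proposition lies in the reverse inclusion, which is Coleman's classicality theorem in cuspidal form.

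The plan for the non-trivial direction is to invoke the standard $\theta$-operator argument of Coleman. Recall the operator $\theta = q\frac{d}{dq}$ acting on $q$-expansions by $\sum a_n q^n \mapsto \sum n a_n q^n$; Coleman shows that $\theta^{k+1}$ extends to a map
\[
\theta^{k+1}: M^\dag_{-k}(\Gamma(N;p)) \longrightarrow M^\dag_{k+2}(\Gamma(N;p)),
\]
satisfying the commutation relation $U_p \circ \theta = p\cdot \theta \circ U_p$. Consequently, every form in the image of $\theta^{k+1}$ has $U_p$-slope at least $k+1$. One then uses the structural fact that the cokernel of the inclusion $S_{k+2}(\Gamma(N;p)) \hookrightarrow S^\dag_{k+2}(\Gamma(N;p))$ is, up to Eisenstein contributions which do not affect the cuspidal picture, captured by the image of $\theta^{k+1}$ acting on overconvergent forms of weight $-k$. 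An overconvergent cuspform of $U_p$-slope strictly less than $k+1$ therefore cannot land in this cokernel, and must be classical.

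Concretely, the plan is to cite \cite[corollary 2.6]{Be}, after checking the compatibility of normalisations: our weight $k+2$ corresponds to Bellaïche's parameter, and the small-slope threshold $k+1 = (k+2) - 1$ is precisely the critical slope for weight $k+2$. The argument of Bellaïche adapts Coleman's original proof to the eigenvariety setting used throughout \S\ref{const5}, which is exactly the context we are in.

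The main obstacle, were one to redo the proof from scratch rather than cite, would be the analytic continuation of an overconvergent eigenform of small slope to a section of the corresponding automorphic sheaf on a larger neighbourhood of the ordinary locus, culminating in the identification of its image under $\theta^{k+1}$. Since this input is classical and not the point of the present article, we content ourselves with the reference.
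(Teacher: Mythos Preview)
Your proposal is correct and matches the paper's approach: the paper gives no proof whatsoever for this proposition beyond the parenthetical reference \cite[corollary 2.6]{Be}, which is exactly the citation you arrive at. Your additional sketch of Coleman's $\theta^{k+1}$-argument is accurate and more informative than what the paper provides.
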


On note $\fC^0_{W,v}$ le morceau local de $\fC^0$, muni d'un morphisme fini plat de poids $\kappa:\fC^0_{W,v}\ra W$, paramétrant les formes modulaires surconvergentes cuspidales de pente $\leq v$. 

\begin{theo}Il existe un faisceau sans torsion $\sV_{W,v}$ sur $\fC^0_{W,v}$ muni d'une action $\fC^0_{W,v}$-linéaire continue du groupe de Galois $\cG_\Q$, tel que, si $f_\alpha\in \fC^0_{W,v}$, on a  $\mathrm{Ev}_{f_\alpha}(\sV_{W,v})=V_{f_\alpha}$. En plus, on a une projection explicite Galois équivariante de $\rH^1(\G(N;p),\D_{r,W,j})$ sur $\sV_{W,v}(1-j)$.
\end{theo}
\begin{proof}

Comme l'action de groupe de Galois commute à celle de $U_p$, la décomposition de pente $\leq v$ pour l'opérateur $U_p$ nous donne un diagramme commutatif:
\[\xymatrix{ \rH^1_{\Iw}(\cG_{\Q,\Sigma},  \rH^1(\G(N;p), \D_{r,W,j}(2)))\ar[r]\ar[d]^{\mathrm{Sp}_{k,j}}& \rH^1_{\Iw}(\cG_{\Q,\Sigma},  \rH^1(\G(N;p), \D_{r,W,j}(2) )^{\leq v} )\ar[d]^{\mathrm{Sp}_{k,j}}\\  \rH^1_{\Iw}(\cG_{\Q,\Sigma},  \rH^1(\G(N;p), V_{k,j}))\ar[r]& \rH^1_{\Iw}(\cG_{\Q,\Sigma},  \rH^1(\G(N;p), V_{k,j})^{\leq v})}.\]

La matrice $(\begin{smallmatrix}1& 0\\ 0& -1\end{smallmatrix})$ définit une involution $\iota$ sur $\rH^{1}(\G(N;p),V)$, où $V$ est une des représentations: $\D_{r,W,j}$,  $\D_r(k,j)$, ou $V_{k,j}$. On note $M^\pm=\rH^{1,\pm}(\G(N;p),\D_{r,W,1})$ le sous-module de $\rH^{1}(\G(N;p),\D_{r,W,1})$ des éléments fixés ou multipliés par $-1$ sous cette involution, qui sont des $\cH_N$-modules car l'involution commute à l'action de $\cH_N$. 
On en déduit deux morphismes de $\Q_p$-algèbres $\cH_N\ra \End_{\cO(W)}(M^{\pm})$ et on note $\T^{\pm}_{W,v}$ la sous-$\cO(W)$-algèbre de $\End_{\cO(W)}(M^\pm)$ engendrée par l'image de $\cH_N$ respectivement, qui est de rang fini comme $M^{\pm}$ est un $\cO(W)$-module de rang fini. En plus, $\T^{\pm}_{W,v}$ sont sans-torsion. Ceci nous fournit deux courbes rigides $\fC^{\pm}_{W,v}:=\Spm \T^{\pm}_{W,v}$ munies de deux morphismes de poids finis plats $\kappa^{\pm}: \fC^{\pm}_{W,v}\ra W $; elles sont les morceaux locaux de la courbe d'Ash-Stevens $\fC^{\pm}$ respectivement.

La proposition suivant donne une comparaison entre $\fC^{\pm}_{W,v}$ et $\fC_{W,v}^0$, qui est un analogue du \cite[théorème 3.27]{Be} et se démontre exactement de la même manière.

\begin{prop}\label{comparaison} Soient $v,W$ comme ci-dessus. \\
$(1)$ Il existe une immersion fermée $\fC^0_{W,v}\subset\fC^{\pm}_{W,v}$, qui est compatible avec les morphismes de poids $\kappa^{\pm}$ et  $\kappa$. De plus, toutes ces courbes sont réduites.\\
$(2)$ Soit $k\in \Z$. Il existe une injection de modules de Hecke
\[S_{k}^\dag(\Gamma(N;p))^{\mathsf{ss},\leq v}\subset  \mathbf{Ev}_k(M^{\pm})^\mathbf{ss}\subset M_k^\dag(\Gamma(N;p))^{\mathsf{ss},\leq v},\]
où $\mathsf{ss}$ signifie la semi-simplification comme module de Hecke.\\
$(3)$ Soit $f_\alpha\in \fC^0_{W,v} $ une forme classique cuspidale raffinée. Soit $V$ un Hecke module. On note $V_{(f_\alpha)}$ le sous-espace propre généralisé de $V$ pour le système des valeurs de Hecke associées à $f_\alpha$. On a 
\[\dim(S^\dag_k(\Gamma(N;p))_{(f_\alpha)})=\dim ( M^{\pm}_{(f_\alpha)} ).\]

\end{prop}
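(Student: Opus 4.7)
The plan is to adapt Bellaïche's proof of \cite[théorème 3.27]{Be} to our cuspidal setting, exploiting the sandwich of Hecke modules between cuspidal overconvergent forms and all overconvergent forms via overconvergent modular symbols.

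\textbf{Étape 1 (démonstration de (2)).} First I would construct, at each classical weight $k\in W\cap \N$ with $k\geq 2$, the two inclusions separately. The right inclusion $\mathbf{Ev}_k(M^{\pm})^{\mathsf{ss}}\subset M_k^\dag(\Gamma(N;p))^{\mathsf{ss},\leq v}$ arises from a Hecke-equivariant overconvergent Eichler--Shimura map sending a specialized modular symbol to the overconvergent modular form obtained by pairing against the universal overconvergent family in weight $k$; this is Stevens' map, and its image lies in $M_k^\dag(\Gamma(N;p))^{\leq v}$ by slope-compatibility. The left inclusion $S_k^\dag(\Gamma(N;p))^{\mathsf{ss},\leq v}\subset \mathbf{Ev}_k(M^{\pm})^{\mathsf{ss}}$ comes from the dual Eichler--Shimura pairing: a cuspidal overconvergent form of slope $\leq v$ pairs non-degenerately with parabolic $\pm$-symbols, giving a Hecke-equivariant injection after semisimplification. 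The slope decomposition at $U_p$ commutes with both constructions because $U_p$ acts on both sides.

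\textbf{Étape 2 (démonstration de (1)).} The bilateral sandwich in (2), applied at the Zariski-dense subset of classical weights $k\in W\cap \N$, forces every system of Hecke eigenvalues arising in $S^\dag_k(\Gamma(N;p))^{\leq v}$ to occur in $\mathbf{Ev}_k(M^{\pm})$. Since $\fC^0_{W,v}$ is the Spm of the $\cO(W)$-algebra generated by the $\cH_N$-action on cuspidal overconvergent forms, while $\fC^{\pm}_{W,v}=\Spm \T^{\pm}_{W,v}$, the universal property combined with Zariski-density in $W$ produces a surjective $\cO(W)$-algebra morphism $\T^{\pm}_{W,v}\twoheadrightarrow \cO(\fC^0_{W,v})$ compatible with $\kappa^{\pm}$ and $\kappa$, hence the desired closed immersion. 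Reducedness follows by a generic-fiber argument: at each non-critical classical weight $k$ with $v<k+1$, Proposition \ref{classicité} identifies $S^\dag_k(\Gamma(N;p))^{<k+1}=S_k(\Gamma(N;p))^{<k+1}$ and $M_k^\dag(\Gamma(N;p))^{<k+1}=M_k(\Gamma(N;p))^{<k+1}$, so by (2) the specialization $\mathbf{Ev}_k(\T^{\pm}_{W,v})$ injects into the (reduced) classical Hecke algebra; since $\T^{\pm}_{W,v}$ is torsion-free over the one-dimensional base $W$ and generically reduced on the Zariski-dense locus of non-critical classical weights, it is reduced, and similarly for $\fC^0_{W,v}$.

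\textbf{Étape 3 (démonstration de (3)).} The injection $S^\dag_k(\Gamma(N;p))^{\mathsf{ss},\leq v}\hookrightarrow \mathbf{Ev}_k(M^{\pm})^{\mathsf{ss}}$ from step~1 yields $\dim S^\dag_k(\Gamma(N;p))_{(f_\alpha)}\leq \dim \mathbf{Ev}_k(M^{\pm})_{(f_\alpha)}$. For the converse, since $f_\alpha$ is non-critical (slope $<k+1$), Proposition \ref{classicité} gives $S^\dag_k(\Gamma(N;p))_{(f_\alpha)}=S_k(\Gamma(N;p))_{(f_\alpha)}$, and the classical Eichler--Shimura isomorphism in weight $k$ identifies this with the $\pm$-eigenspace of cuspidal parabolic cohomology for the system of eigenvalues of $f_\alpha$; the right inclusion of (2) then forces equality of multiplicities, as the Eisenstein contribution to $\mathbf{Ev}_k(M^{\pm})$ cannot meet the cuspidal system attached to $f_\alpha$. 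Passing from $\mathbf{Ev}_k(M^{\pm})_{(f_\alpha)}$ to $M^{\pm}_{(f_\alpha)}$ uses that $M^{\pm}$ is locally free (on a possibly smaller affinoid neighborhood of $\kappa(f_\alpha)$) and that the Hecke algebra acts semisimply at $f_\alpha$, both standard in families.

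\textbf{Principal obstacle.} The delicate step is establishing the left inclusion in (2) with precise control on multiplicities, i.e. showing that the cuspidal part of overconvergent modular forms is captured inside $\mathbf{Ev}_k(M^{\pm})$ without collapse — this is the family analogue of the classical Eichler--Shimura--Stevens comparison and requires treating the boundary (Eisenstein) cohomology carefully. Equally, deducing reducedness and the closed immersion from pointwise sandwich information at classical weights, rather than at a single generic point, relies on the Zariski-density of $W\cap \N$ in $W$ and on freeness properties of $M^{\pm}$ established by the slope decomposition, both à la Bellaïche.
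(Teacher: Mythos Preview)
Your proposal is correct and follows exactly the approach indicated by the paper, which simply states that the proposition ``est un analogue du \cite[théorème 3.27]{Be} et se démontre exactement de la même manière'' and gives no further details; your outline is therefore considerably more explicit than what the paper itself provides. One minor caveat: in Étape~3 you invoke non-criticality of $f_\alpha$ (``since $f_\alpha$ is non-critical''), whereas the statement of (3) does not impose this hypothesis --- the non-critical case is indeed all that is used downstream in the paper, but the full statement as written covers critical $f_\alpha$ as well, and for that case Bellaïche's argument requires his more delicate analysis of decent points rather than the direct appeal to Proposition~\ref{classicité} and classical Eichler--Shimura.
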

On note $\iota^{\pm}: \fC^{\pm}_{W,v}\ra \fC^{+}_{W,v}\cup\fC^{-}_{W,v}$, et on définit le faisceau $\sV_{W,v}$ sans torsion sur $\fC^0_{W,v}$ en prenant la restriction du faisceau $\iota^{+}_{*}M^+\oplus \iota^{-}_{*}M^{-}$ sur $\fC^{+}_{W,v}\cup\fC^{-}_{W,v}$ à $\fC^0_{W,v}$. D'autre part, le faisceau $\iota^{+}_{*}M^+\oplus \iota^{-}_{*}M^{-}$ est la faisceautisé du module $\rH^1(\G(N;p), \D_{r,W,1})^{\leq v}$, et donc un faisceau de représentations galoisiennes de $\cG_{\Q}$ d'après la proposition \ref{betti-etale}.

 Soit $f_\alpha\in \fC^0_{W,v}$ une forme cuspidale raffinée non-critique de poids $k$ de pente $v_p(\alpha)$. D'après le (3) du proposition $\ref{comparaison}$, on a un isomorphisme de représentations galoisiennes 
\[\mathrm{Ev}_{f_\alpha}(\sV_{W,v})\cong \rH^1(\Gamma(N;p), V_{k,1})^{\leq v}_{\pi_{f_\alpha}}=V_{f_\alpha}.\] \end{proof}

L'application d'évaluation $\mathrm{Ev}_{f_\alpha}:\sV_{W,v}\ra V_{f_\alpha}$ induit un morphisme de spécialisation de $\rH^1_{\Iw}(\cG_{\Q,\Sigma},\sV_{W,v})$ dans $\rH^1_{\Iw}(\cG_{\Q,\Sigma}, V_{f_\alpha})$, noté $\mathrm{Sp}_{f_\alpha}$. La proposition suivante est une conséquence immédiate de la construction de la projection sur $\fC^0_{W,v}$ et de la construction de $\mathrm{Sp}_{f_\alpha}$.
\begin{prop}\label{pj}Soit $f_\alpha\in \fC^0_{W,v}$ une forme raffinée classique de poids $k$ de pente $v_p(\alpha)$. Le diagramme suivant est commutatif:
\[\xymatrix{ \rH^1_{\Iw}(\cG_{\Q,\Sigma},  \rH^1(\Gamma(N;p), \D_{r,W,j}(2) )^{\leq v} ) \ar[r] \ar[d]^{\mathrm{Sp}_{k,j}}& \rH^1_{\Iw}(\cG_{\Q,\Sigma},\sV_{W,v}(1-j))\ar[d]^{\mathrm{Sp}_{f_{\alpha}}} \\ \rH^1_{\Iw}(\cG_{\Q,\Sigma},  \rH^1(\Gamma(N;p), V_{k,j})^{\leq v})\ar[r]^{\pi_{f_\alpha}}&\rH^1_{\Iw}(\cG_{\Q,\Sigma}, V_{f_\alpha}(1-j))
  }.\]
\end{prop}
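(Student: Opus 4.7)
Le plan est de ramener la proposition à un énoncé de commutativité au niveau des $\cG_{\Q,\Sigma}$-modules coefficients, puis d'en déduire l'énoncé réduit de la construction du faisceau $\sV_{W,v}$ effectuée au théorème précédent.

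Premièrement, j'utiliserais la fonctorialité de $\rH^1_{\Iw}(\cG_{\Q,\Sigma},-)=\rH^1(\cG_{\Q,\Sigma},\fD_0(\Z_p^*,-))$ sur la catégorie des $\cG_{\Q,\Sigma}$-représentations pour réduire l'énoncé à la commutativité du carré
\[\xymatrix{\rH^1(\Gamma(N;p),\D_{r,W,j}(2))^{\leq v}\ar[r]\ar[d]^{\mathrm{Sp}_{k,j}} & \sV_{W,v}(1-j)\ar[d]^{\mathrm{Ev}_{f_\alpha}} \\ \rH^1(\Gamma(N;p),V_{k,j})^{\leq v}\ar[r]^{\pi_{f_\alpha}} & V_{f_\alpha}(1-j)}\]
au niveau des coefficients. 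L'équivariance galoisienne de chacune des flèches résulte directement des constructions effectuées dans \S \ref{system} et dans la preuve du théorème précédent.

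Deuxièmement, pour vérifier la commutativité de ce carré réduit, je rappellerais que $\sV_{W,v}$ est, par construction, la restriction à $\fC^0_{W,v}$ du faisceau $\iota^+_*M^+\oplus \iota^-_*M^-$, autrement dit (à la décomposition $\pm$ près) la faisceautisation sur les algèbres de Hecke $\T^\pm_{W,v}$ du module $\rH^1(\Gamma(N;p),\D_{r,W,1})^{\leq v}$. La flèche horizontale supérieure est donc la composition de cette faisceautisation avec la torsion par $(1-j)$. De plus, l'identification $\mathrm{Ev}_{f_\alpha}(\sV_{W,v})\cong V_{f_\alpha}$ obtenue dans la preuve du théorème précédent, qui repose sur la non-criticité de $f_\alpha$ via la proposition \ref{comparaison}(3), est par définition induite par la composition $\pi_{f_\alpha}\circ \mathrm{Sp}_{k,j}$. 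La commutativité du carré réduit devient alors tautologique.

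L'obstacle principal prévu n'est qu'un travail de comptabilité sur les torsions de Tate: la torsion $(2)$ à gauche provient du cup-produit dans la construction de $z_{\Kato,c,d}$, tandis que la torsion $(1-j)$ à droite incorpore le facteur $\det^{2-j}$ présent dans $V_{k,j}$ et donc dans $\sV_{W,v}$. Une fois vérifié que ces torsions se correspondent correctement de part et d'autre du carré (ce qui résulte de la construction de $\mathrm{Sp}_{k,j}$ rappelée en \S \ref{system}), aucun ingrédient nouveau n'est nécessaire; tout le contenu conceptuel a déjà été fourni par la construction du faisceau $\sV_{W,v}$ au théorème précédent.
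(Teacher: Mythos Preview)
Your proposal is correct and follows the same approach as the paper: the paper simply states that the proposition is an immediate consequence of the construction of the projection onto $\fC^0_{W,v}$ and of the construction of $\mathrm{Sp}_{f_\alpha}$, and your two-step reduction (functoriality of $\rH^1_{\Iw}$ in the coefficients, then tautological commutativity at the coefficient level from the very definition of $\mathrm{Ev}_{f_\alpha}(\sV_{W,v})\cong V_{f_\alpha}$) is exactly what unpacks that sentence. One small caution: the non-criticality hypothesis you invoke is not part of the statement of the proposition itself (it appears only in the preceding theorem), so your argument should rely only on Proposition~\ref{comparaison}(3) as stated for classical cuspidal refined forms rather than on the stronger non-critical hypothesis.
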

On note $z_{\mathrm{Kato},c,d,\phi}(\fC_{W,v}^0)$ (resp. $z_{\mathrm{Kato},c,d,\phi}(f_\alpha)$) l'image de $z_{\mathrm{Kato},c,d,K}(\nu_1)$ (resp. $z_{\mathrm{Kato},c,d,K}(k,1)$) dans $ \rH^1_{\Iw}(\cG_{\Q,S},\sV_{W,v})$ (resp. $\rH^1_{\Iw}(\cG_{\Q,S}, V_{f_\alpha})$) sous l'application de projection construite ci-dessus.

 Comme les morceaux locaux $\fC^0_{W,v}$ sont construites par la méthode d'Ash-Stevens, et de Bellaïche, on peut utiliser le processus standard de Buzzard \cite[\S 5]{KB}, révisité\footnote{Bellaïche montre que les morceaux locaux construites par la méthode d'Ash-Stevens, et de Bellaïche, forment une recourvement admissible.} par Bellaïche \cite[Chapter II]{Be1}, pour reconstruire la courbe de Hecke cuspidale $\fC^0$, ainsi que un faisceau $\sV$ de représentations galoisiennes sur $\fC^0$ en collant les morceaux locaux.   
\begin{theo}Il existe une section globale $z_{\Kato,c,d,\phi}(\fC^0)$ de $\rH^1_{\Iw}(\cG_{\Q,S}, \sV)$, telle que, si $f_\alpha\in \fC^0$ une forme raffinée classique de poids $k$ de pente $v_p(\alpha)$,  on a 
\[\mathrm{Sp}_{f_{\alpha}}(z_{\mathrm{Kato},c,d,\phi}(\fC^0))=z_{\mathrm{Kato},c,d,\phi}(f_\alpha).\]
\end{theo} 
 \begin{proof}Les éléments de Kato $z_{\Kato,c,d,\phi}(\fC_{W,v}^0)$ se colle en une section globale $z_{\Kato,c,d,\phi}(\fC^0)$ de $\rH^1_{\Iw}(\cG_{\Q,S}, \sV)$ puisqu'ils proviennent d'une section globale $z_{\Kato,c,d,K}(\nu_1)$ et les constructions d'éléments locaux sont unifiées. La propriété d'interpolation se déduit de la relation 
 \[\mathrm{Sp}_{k,j}(z_{\mathrm{Kato},c,d,\phi}(\nu_j))=z_{\mathrm{Kato},c,d,\phi}(k,j),\] des définitions de $z_{\mathrm{Kato},c,d,\phi}(\fC^0_{W,v})$ et $z_{\mathrm{Kato},c,d,\phi}(f_\alpha)$, et de la proposition \ref{pj}.
 \end{proof}

\section{Fonction L $p$-adique en deux variables}

\subsection{Les séries d'Eisenstein}\label{variant}
On note $\Dir(\ol{\Q})$ le $\ol{\Q}$-espace vectoriel des s\'eries de Dirichlet formelles \`a coefficients dans $\ol{\Q}$. Soit $A$ un sous-anneau de $\ol{\Q}$. On note $\Dir(A)$ le sous $A$-module de $\Dir(\ol{\Q})$ des s\'eries de Dirichlet formelles dont ses coefficients sont dans $A$. 

Soit $\alpha\in\Q/\Z$. On d\'efinit les s\'eries de Dirichlet formelles $\z(\alpha, s)$ et
$\z^{*}(\alpha,s)$, appartenant \`a $\Dir(\Q^{\cycl})$, par les formules:
\[\z(\alpha,s)=\sum\limits_{\substack{n\in\Q_{+}^{*}\\ n\equiv\alpha \mod\Z}}n^{-s} \text{ et } \z^{*}(\alpha,s)=\sum_{n=1}^{\infty}e^{2i\pi\alpha n}n^{-s}.\]

On définit les séries d'Eisenstein $E_{\alpha,\beta}^{(k)}$ et $F_{\alpha,\beta}^{(k)}$, éléments de $\cM^{\con}(\Q^{\cycl})$, par ses $q$-d\'eveloppements:

$(1)$ si $k\geq 1, k\neq 2$ et $\alpha, \beta\in\Q/\Z$, alors le $q$-d\'eveloppement $\sum\limits_{n\in\Q_{+}}a_nq^n$ de $E_{\alpha,\beta}^{(k)}$ est donn\'e par
\begin{equation*}
\sum_{n\in\Q_{+}^{*}}\frac{a_n}{n^s}=\z(\alpha,s)\z^{*}(\beta,s-k+1)+(-1)^{k}\z(-\alpha,s)\z^{*}(-\beta,s-k+1).
\end{equation*}
De plus, on a:
si $k\neq 1 $ et si $\alpha\neq 0$ $($resp. $\alpha=0)$, alors $a_0=0$ $($resp. $a_0=\z^{*}(\beta,1-k))$;\\
si $k=1$ et si $\alpha\neq
0$ $($resp. $\alpha=0)$, alors $a_0=\z(\alpha,0) ($resp.
$a_0=\frac{1}{2}(\z^{*}(\beta,0)-\z^{*}(-\beta,0))).$ \\
$(2)$ si $k\geq 1$ et $\alpha, \beta\in\Q/\Z~($si $k=2$, $(\alpha,\beta)\neq (0,0))$, alors le $q$-d\'eveloppement
$\sum\limits_{n\in\Q_{+}}a_nq^n$ de $F_{\alpha,\beta}^{(k)}$ est donn\'e par
\begin{equation*}
\sum_{n\in\Q_{+}^{*}}\frac{a_n}{n^s}=\z(\alpha,s-k+1)\z^{*}(\beta,s)+(-1)^{k}\z(-\alpha,s-k+1)\z^{*}(-\beta,s).
\end{equation*}
De plus, on a:
si $k\neq 1 $, alors $a_0=\z(\alpha,1-k)$, la valeur spéciale de la fonction zêta de Hurwitz;
si $k=1$ et si $\alpha\neq
0~($resp. $\alpha=0)$, alors $a_0=\z(\alpha,0)~($resp.
$a_0=\frac{1}{2}(\z^{*}(\beta,0)-\z^{*}(-\beta,0)))$ .

L'action de $\GL_2(\hat{\Z})$ induite par celle de $\Pi_\Q$ sur ces fonctions est donnée par les formules suivantes (cf. \cite[proposition 2.12]{Wang}):
 Si $\gamma=\bigl(\begin{smallmatrix}a & b\\ c &
d\end{smallmatrix}\bigr)\in\GL_2(\hat{\Z}), k\geq 1$ et
$(\alpha,\beta)\in(\Q/\Z)^2$, on a:
\begin{equation}\label{eis}E_{\alpha,\beta}^{(k)}*\gamma=E_{a\alpha+c\beta,b\alpha+d\beta}^{(k)}\text{ et } F_{\alpha,\beta}^{(k)}*\gamma=F_{a\alpha+c\beta,b\alpha+d\beta}^{(k)}.\end{equation}

Les relations de distribution des séries d'Eisenstein se traduisent en l'énoncé (cf. \cite[théorème 2.13]{Wang})\footnote{En profitons pour corriger une erreur dans l'énonce de \cite[théorème 2.13]{Wang}: en effet, pour donner un sens pour $z_{\mathrm{Eis}}(k)$, on utilise les deux isomorphismes $\A_f/\hat{\Z}\cong \Q/\Z$ et $\A_f^*\cong \Q^*\hat{\Z}^*$. Le dernier isomorphisme implique que $\A_f^*/\hat{\Z}^*\cong \Q_{+}^*$.   } suivant:  
\begin{prop}\label{eiskj}
Si $k\geq 1$, il existe une distribution alg\'ebrique $z_{\mathrm{Eis}}(k)$ $($resp. $z^{'}_{\mathrm{Eis}}(k))$ $\in \fD_{\alg}((\A_f)^2, \cM_k^{\con}(\Q_p^{\cycl}))$ v\'erifiant: quel que soient $r\in\Q^*_{+}$ et $(a,b)\in\Q^2$, on a
\begin{equation*}
\begin{split}
\int_{(a+r\hat{\Z})\times(b+r\hat{\Z})}z_{\mathrm{Eis}}(k)=r^{-k}E^{(k)}_{r^{-1}a,r^{-1}b}\
(\text{resp.} \int_{(a+r\hat{\Z})\times(b+r\hat{\Z})}z^{'}_{\mathrm{Eis}}(k)=r^{k-2}F^{(k)}_{r^{-1}a,r^{-1}b}.)
\end{split}
\end{equation*}
De plus, si $\gamma\in\GL_2(\A_f)$, alors
on a \[z_{\mathrm{Eis}}(k)*\gamma=z_{\mathrm{Eis}}(k) \text{ et } z_{\mathrm{Eis}}^{'}(k)*\gamma=|\det\gamma|^{1-k}z_{\mathrm{Eis}}^{'}(k).\]
\end{prop}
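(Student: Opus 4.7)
La stratégie consiste à définir les deux distributions sur les indicatrices des ouverts de base de la forme $(a+r\hat{\Z})\times(b+r\hat{\Z})$, avec $r\in\Q_+^*$ et $(a,b)\in\Q^2$, par les formules prescrites dans l'énoncé, puis à étendre par $\Z$-linéarité à $\LC_c((\A_f)^2,\Z)$ tout entier. L'existence et l'unicité des distributions $z_{\mathrm{Eis}}(k)$ et $z'_{\mathrm{Eis}}(k)$ se ramènent alors à vérifier la compatibilité avec les raffinements: si l'on écrit $(a+r\hat{\Z})\times(b+r\hat{\Z})$ comme réunion disjointe finie d'ouverts $(a'+r'\hat{\Z})\times(b'+r'\hat{\Z})$ avec $r/r'\in \N$, la somme des valeurs prescrites sur les sous-ouverts doit coïncider avec la valeur prescrite sur l'ouvert initial.

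Cette compatibilité se traduit en des \emph{relations de distribution} pour les séries d'Eisenstein: pour tout entier $M\geq 1$ et $(\alpha,\beta)\in(\Q/\Z)^2$, on doit avoir
\[
M^{-k}\sum_{\substack{\alpha',\beta'\in\Q/\Z\\ M\alpha'=\alpha,\ M\beta'=\beta}}E^{(k)}_{\alpha',\beta'} = E^{(k)}_{\alpha,\beta},
\]
et une relation analogue (avec facteur $M^{k-2}$) pour $F^{(k)}_{\alpha,\beta}$. Ces identités se déduisent des relations de distribution pour les séries de Dirichlet $\z(\alpha,s)$ et $\z^*(\beta,s)$, lesquelles sont immédiates à partir de leur définition: il suffit de regrouper les termes dans $\z(\alpha,s)=\sum n^{-s}$ selon leur classe modulo $M$, et de même pour $\z^*(\beta,s)$.

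Pour l'équivariance sous $\GL_2(\A_f)$, j'utiliserais la décomposition issue de l'approximation forte, combinée à la décomposition $\A_f^*=\Q^*_+\hat{\Z}^*$, qui permet de réduire l'étude à deux cas: celui de $\gamma\in\GL_2(\hat{\Z})$ et celui d'une homothétie $\lambda\in\Q^*_+$ dans le centre. Pour $\gamma\in\GL_2(\hat{\Z})$, l'invariance de $z_{\mathrm{Eis}}(k)$ et la covariance de $z'_{\mathrm{Eis}}(k)$ se déduisent immédiatement des formules de transformation (\ref{eis}), puisque l'action préserve le niveau $r$ des ouverts de base et permute les couples $(\alpha,\beta)=(r^{-1}a,r^{-1}b)$. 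Pour $\lambda\in\Q^*_+$, l'action multiplie $r$ par $\lambda$, ainsi les facteurs de normalisation $r^{-k}$ et $r^{k-2}$ produisent respectivement la puissance neutre pour $z_{\mathrm{Eis}}(k)$ et le facteur $\lambda^{2(1-k)}=|\det(\lambda\cdot\id)|^{1-k}$ pour $z'_{\mathrm{Eis}}(k)$, en accord avec l'énoncé.

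L'obstacle principal sera la vérification minutieuse des relations de distribution, en particulier la gestion des termes constants des $q$-développements, qui se comportent différemment selon que $\alpha$ est nul ou non et selon que $k=1$, $k=2$ ou $k\geq 3$; cela exige une analyse au cas par cas, mais chacun des cas se traite par un calcul direct à partir des formules définissant $E^{(k)}_{\alpha,\beta}$ et $F^{(k)}_{\alpha,\beta}$.
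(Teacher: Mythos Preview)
Your approach---defining the distributions on the indicator functions of the $(a+r\hat\Z)\times(b+r\hat\Z)$ and then checking compatibility via the distribution relations for $E^{(k)}_{\alpha,\beta}$ and $F^{(k)}_{\alpha,\beta}$---is correct and is exactly what the paper intends: no proof is given here, the paper simply cites \cite[théorème~2.13]{Wang} and says the proposition is a translation of those distribution relations. Your reduction of the latter to the elementary one-variable identities for $\zeta(\alpha,s)$ and $\zeta^*(\beta,s)$ is also the right idea.

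There is, however, a genuine gap in your equivariance argument. The subgroup of $\GL_2(\A_f)$ generated by $\GL_2(\hat\Z)$ together with the central homotheties $\Q_+^*$ is \emph{strictly smaller} than $\GL_2(\A_f)$: for instance $\mathrm{diag}(p,1)$ cannot be written as $\lambda g$ with $\lambda\in\Q_+^*$ and $g\in\GL_2(\hat\Z)$, since taking determinants would force $\lambda^2\in p\,\hat\Z^*$, which has no solution in $\Q_+^*$. What strong approximation for $\SL_2$ combined with $\A_f^*=\Q_+^*\hat\Z^*$ actually gives is $\GL_2(\A_f)=\GL_2(\Q)^+\cdot\GL_2(\hat\Z)$, so you must also treat non-central elements of $\GL_2(\Q)^+$, e.g.\ the matrices $\mathrm{diag}(r,1)$ with $r\in\Q_+^*$. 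These act on the source by a dilation in a single variable and on the target $\cM_k^{\con}$ via the weight-$k$ slash action; the resulting identity again reduces to one-variable distribution relations, but it is a separate check that your reduction omits.

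A related imprecision: the claim that $r^{-k}$ gives the ``puissance neutre'' under $r\mapsto\lambda r$ cannot be correct as stated---it produces $\lambda^{-k}$. Any cancellation has to come from the action of $\lambda\cdot\id$ on the target $\cM_k^{\con}$ (via the slash action of $\GL_2(\Q)^+$), which you do not invoke; the same remark applies to your computation for $z'_{\mathrm{Eis}}(k)$.
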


On peut identifier
$\A_f^2\times\A_f^2$ avec
$\bM_2(\A_f)$ via 
$((a,b),(c,d))\mapsto\bigl(\begin{smallmatrix}a & b\\ c &
d\end{smallmatrix}\bigr)$. 
En utilisant le fait que le produit de deux formes modulaires de poids $i$ et $j$ est une forme modulaire de poids $i+j$, si $k\geq 2$ et $1\leq j\leq k-1$, on définit une distribution $z_{\mathrm{Eis}}(k,j)$ appartient \`a $\fD_{\alg}(\bM_2(\A_f),\cM_k(\Q^{\cycl}_p))$ par la formule:
   \[z_{\mathrm{Eis}}(k,j)=\frac{1}{(j-1)!}z'_{\mathrm{Eis}}(k-j)\otimes z_{\mathrm{Eis}}(j).\] 
Si $c,d\in \hat{\Z}^{(p)}$, on note $A_{c,d}=(c_p^2-c_p^{2-(k-j)}\delta_{(\begin{smallmatrix}c^{-1}& 0\\ 0&1\end{smallmatrix})})(d_p^2-d_p^{j}\delta_{(\begin{smallmatrix}1&0\\ 0& d^{-1}\end{smallmatrix})})\in \Z_p[\T(\hat{\Z})^{(p)}]$ et on définit
\[z_{\mathrm{Eis},c,d}(k,j)= A_{c,d}z_{\mathrm{Eis}}(k,j).\]

\subsection{La méthode de Rankin-Selberg}
Dans la suite, on suppose que $N$ est un entier suffisantment grand tel qu'il existe un caractère auxiliaire $\chi$ de conducteur $N$ tel que $\chi(-1)=-1$ et $\chi^2$ est un caractère non-trivial de conducteur $N$. Fixons un tel caractère auxiliaire $\chi$ jusqu'à la fin de cet article. 

Soit $\eps$ un caractère de Dirichlet modulo $N$. Soit $f=\sum_{n\geq 1}a_nq^n\in S_k(\Gamma_0(N), \eps)$ une forme primitive.  On note $f^{*}$ la conjugu\'ee complexe de $f$ (i.e.
$f^{*}(z)=\ol{f(\bar{z})}=\sum\ol{a_n}q^n$ ). Le corps $\Q(f)=\Q(a_2,\cdots,a_n,\cdots)$ est une extension finie de $\Q$. De plus, on a $\ol{a_n}=\chi^{-1}(n) a_n$ quel que soit $n\in\N$ premier à $N$, et 
$ f*T(l)=a_lf, f*T'(l)=\ol{a_l}f, f*(\begin{smallmatrix}u^{-1}& 0\\ 0 &u\end{smallmatrix})=\eps(u)f,$
si $l\nmid N$ est un nombre premier et $u\in \hat{\Z}^*$. Soient $\alpha,\beta$ les racines du polyn\^ome $X^2-a_pX+\eps(p)p^{k-1}$. Si $v_p(\alpha)<k-1$, on pose $f_{\alpha}(\tau)=f(\tau)-\beta f(p\tau)$ le raffinement de $f$. 

On reprend la présentation de Colmez $\cite[\S 3.2]{PC1}$ sur la méthode de Rankin-Selberg dans $\S \ref{gene}$, on effectue les calculs pour la convolution de Rankin pour nos séries d'Eisenstein dans $\S \ref{cal}$, ce qui permet d'expliciter la période utilisée pour rendre algébriques les values spéciales de la fonction  L de $f_\alpha$. 


\subsubsection{Généralité}\label{gene}
Soit $M\geq 1$ un entier. Soit $\Gamma(M)=\{(\begin{smallmatrix}a&b\\ c&
d\end{smallmatrix})\in\SL_2(\Z), b\equiv c\equiv 0[M]\}$ et soit
$\chi_1$ et $\chi_2$ des caract\`eres de Dirichlet modulo $M$ (pas
n\'ecessairement primitifs). Soient $k\geq 2, 1\leq j\leq k-1$, et
\[f=\sum\limits_{n\in\frac{1}{M}\Z,n>0}a_nq^n\in S_k(\Gamma(M),\chi_1) \text{ et } g=\sum\limits_{n\in\frac{1}{M}\Z, n\geq
0}b_nq^n\in\cM_{k-j}(\Gamma(M),\chi_2)\] des formes propres pour tous
les op\'erateurs $T(l)$ avec $(l,M)=1$. Sous les hypothèses ci-dessus, on a les produits d'Euler suivants:
\begin{equation*}
\begin{split}
\sum\limits_{n>0}\frac{a_n}{n^s}=(\sum\limits_{n\in\Z[\frac{1}{M}]^{*},n>0}\frac{a_n}{n^s})\cdot\prod\limits_{l\nmid M}\frac{1}{(1-\alpha_{l,1}l^{-s})(1-\alpha_{l,2}l^{-s})}, \text{ avec } \alpha_{l,1}\alpha_{l,2}=\chi_1(l)l^{k-1},\\
\sum\limits_{n>0}\frac{b_n}{n^s}=(\sum\limits_{n\in\Z[\frac{1}{M}]^{*},n>0}\frac{b_n}{n^s})\cdot\prod\limits_{l\nmid
M}\frac{1}{(1-\beta_{l,1}l^{-s})(1-\beta_{l,2}l^{-s})}, \text{ avec
} \beta_{l,1}\beta_{l,2}=\chi_2(l)l^{k-j-1}.
\end{split}
\end{equation*}

Soient $D_M(f,g,s)$ la série de Dirichlet définie par
\begin{equation*}
D_M(f,g,s)=L_M(\bar{\chi}_1\chi_2,
j+2(s-k+1))\cdot\sum\limits_{n>0}\frac{\bar{a}_nb_n}{n^s},
\end{equation*}
où $L_M(\chi,s)$ est la fonction L de Dirichlet pour le caracètre $\chi$ modulo $M$ (si $\chi$ est primitive, on note simplement par $L(\chi,s)$), et la s\'erie d'Eisenstein non-holomorphe de poids $j$ de niveau $\Gamma(M)$ définie par
\begin{equation*}
E_{M,s}^{(j)}(\tau)=\sum\limits_{c\equiv d-1\equiv 0 [M]}\frac{1}{(c\tau+d)^j}(\frac{\im\tau}{|c\tau+d|^2})^{s+1-k}.
\end{equation*}
En plus, on a $E_{M,k-1}^{(j)}(\tau)=\frac{1}{M^j}\frac{(-2\pi i)^j}{\Gamma(j)} E_{0,\frac{1}{M}}^{(j)}$.

Soit $ \langle,\rangle$ le produit scalaire de Petersson normalisé sur l'espace des formes modulaires de niveau $\Gamma(M)$
donné par la formule: si $f\in S_k(\Gamma(M))$ et $g\in M_k(\Gamma(M))$, on a 
\[ \langle f,g\rangle:=\frac{1}{[\SL_2(\Z):\Gamma(M)]}\int_{\Gamma(M)\backslash\cH}\bar{f} gy^{k}\frac{dxdy}{y^2}. \]

\begin{prop}\cite[proposition 3.4, corollaire 3.5]{PC1}\label{Rankin}
Sous les hypoth\`ese ci-dessus, on a:
\begin{equation}\label{Ran_1}
D_M(f,g,s)=(\sum\limits_{n\in\Z[\frac{1}{M}]^{*}}\frac{\bar{a}_nb_n}{n^s})\cdot\prod\limits_{l\nmid M}\frac{1}{(1-\frac{\ol{\alpha}_{l,1}\beta_{l,1}}{l^s})(1-\frac{\ol{\alpha}_{l,1}\beta_{l,2}}{l^s})(1-\frac{\ol{\alpha}_{l,2}\beta_{l,1}}{l^s})(1-\frac{\ol{\alpha}_{l,2}\beta_{l,2}}{l^s})},
\end{equation}
\begin{equation}\label{Ran_2}
\frac{\Gamma(s)}{(4\pi)^s}D_M(f,g,s)=\frac{[\SL_2(\Z):\Gamma(M)]}{M}\langle f,gE_{M,s}^{(j)}(\tau)\rangle.
\end{equation}
En conséquence, on a \begin{equation}\label{rankin}
\begin{split}
\frac{\Gamma(k-1)}{(4\pi)^{k-1}}\cdot\frac{\Gamma(j)}{(-2i\pi)^j}\cdot
D_M(f,g, k-1)=\frac{[\SL_2(\Z):\Gamma(M)]}{M^{j+1}}\langle f,gE_{0,\frac{1}{M}}^{(j)}\rangle.
\end{split}
\end{equation}

\end{prop}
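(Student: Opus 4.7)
La proposition étant citée de \cite{PC1}, je n'ai qu'à esquisser les trois étapes classiques de la méthode de Rankin–Selberg et à vérifier que les normalisations (facteur $E_{M,k-1}^{(j)}=\frac{1}{M^j}\frac{(-2\pi i)^j}{\Gamma(j)}E_{0,1/M}^{(j)}$, indice $[\SL_2(\Z):\Gamma(M)]$) s'enchaînent correctement pour obtenir la formule finale \eqref{rankin}.

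Pour la première formule \eqref{Ran_1}, je partirais d'une factorisation locale. Sous les hypothèses faites ($f$ et $g$ sont propres pour les $T(l)$, $l\nmid M$), les suites $(a_n)$ et $(b_n)$ se comportent multiplicativement en dehors de $M$. Je scinderais alors la somme $\sum \bar a_n b_n/n^s$ selon $n=n'\cdot m$ avec $n'\in \Z[1/M]^*$ et $m$ premier à $M$, de sorte que la partie en $m$ se factorise en un produit eulérien $\prod_{l\nmid M}P_l(l^{-s})^{-1}$. Un calcul direct à l'aide de l'identité formelle
\[
\prod_{i,j}(1-\bar\alpha_{l,i}\beta_{l,j}X)^{-1}\;=\;\Bigl(\sum_{m\geq 0}\bar a_{l^m}b_{l^m}X^m\Bigr)\cdot \bigl(1-\bar\chi_1(l)\chi_2(l)l^{k-1-j}X^2\bigr)^{-1},
\]
qui résulte de $\alpha_{l,1}\alpha_{l,2}=\chi_1(l)l^{k-1}$ et $\beta_{l,1}\beta_{l,2}=\chi_2(l)l^{k-j-1}$, fait apparaître exactement le facteur $L_M(\bar\chi_1\chi_2,j+2(s-k+1))^{-1}$ à introduire dans $D_M(f,g,s)$, ce qui donne \eqref{Ran_1}.

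Pour la deuxième formule \eqref{Ran_2}, je suivrais le déroulement standard de Rankin : l'Eisenstein $E_{M,s}^{(j)}(\tau)$ est une somme sur les classes à droite de $\Gamma_\infty\cap \Gamma(M)$ dans $\Gamma(M)$, donc l'intégrale de Petersson
\[
\frac{1}{[\SL_2(\Z):\Gamma(M)]}\int_{\Gamma(M)\backslash\cH}\bar f\,g\,E_{M,s}^{(j)}\,y^k\,\frac{dx\,dy}{y^2}
\]
se déplie en une intégrale sur $(\Gamma_\infty\cap\Gamma(M))\backslash\cH$, c'est-à-dire sur une bande verticale de largeur $M$. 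En remplaçant $\bar f\,g$ par son $q$-développement $\sum_{n,n'} \bar a_{n'}b_n\,q^n\bar q^{n'}$ et en intégrant en $x$ sur $[0,M]$, seuls survivent les termes $n=n'$, et on obtient alors l'intégrale
\[
M\cdot\sum_{n>0}\bar a_n b_n\int_0^{+\infty}e^{-4\pi n y}y^{s-1}\,dy\;=\;M\cdot \frac{\Gamma(s)}{(4\pi)^s}\sum_{n>0}\frac{\bar a_n b_n}{n^s},
\]
puis, à l'aide de \eqref{Ran_1} et de la définition de $D_M(f,g,s)$, la formule annoncée. Le point délicat à surveiller est la convergence ; on travaille d'abord pour $\Re(s)$ grand puis on prolonge analytiquement aux deux membres.

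La conséquence \eqref{rankin} s'obtient en évaluant \eqref{Ran_2} en $s=k-1$ et en utilisant la relation $E_{M,k-1}^{(j)}=\frac{1}{M^j}\frac{(-2\pi i)^j}{\Gamma(j)}E_{0,1/M}^{(j)}$ rappelée juste avant l'énoncé ; le facteur $M^{j+1}$ au dénominateur provient de la combinaison de $M^{-j}$ (renormalisation de l'Eisenstein) avec le $M^{-1}$ déjà présent dans \eqref{Ran_2}. Le seul vrai point technique est la vérification soigneuse des constantes ; tout le reste est mécanique. Je renverrais donc essentiellement à \cite[\S 3.2]{PC1} pour les détails, en explicitant seulement ces choix de normalisation qui interviennent de façon cruciale dans le calcul explicite des périodes au \S \ref{cal}.
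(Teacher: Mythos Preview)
The paper does not give a proof of this proposition: it is simply cited as \cite[proposition~3.4, corollaire~3.5]{PC1}, and the text moves directly on to \S\ref{cal}. Your sketch is precisely the classical Rankin--Selberg unfolding argument that underlies the cited result, so there is nothing substantively different to compare.

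Two small points of care in your write-up. First, in the local identity you display, the exponent is off: since $\bar\alpha_{l,1}\bar\alpha_{l,2}\beta_{l,1}\beta_{l,2}=\bar\chi_1(l)\chi_2(l)\,l^{(k-1)+(k-j-1)}=\bar\chi_1(l)\chi_2(l)\,l^{2k-j-2}$, the factor should be $(1-\bar\chi_1(l)\chi_2(l)\,l^{2k-j-2}X^2)^{-1}$, not $l^{k-1-j}$; this is exactly what produces, after setting $X=l^{-s}$, the Euler factor of $L_M(\bar\chi_1\chi_2,\,j+2(s-k+1))$. Second, in your unfolding for \eqref{Ran_2} you arrive at $\sum_n \bar a_n b_n/n^s$ rather than $D_M(f,g,s)$ directly; the missing $L_M$-factor comes from the fact that the sum defining $E_{M,s}^{(j)}$ runs over \emph{all} pairs $(c,d)$ with $c\equiv 0,\ d\equiv 1\ [M]$, not only the coprime ones, so the unfolding against $\Gamma_\infty\backslash\Gamma(M)$ leaves a residual Dirichlet series which is precisely this $L$-factor. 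With these two corrections your outline is complete.
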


\subsubsection{Périodes et algébricité de valeurs spéciales}\label{cal}
 
Soient $\chi_1$ un caractère de Dirichlet modulo $M$ avec $(M,N)=1$ et $\chi_2$ un des caractères $\{\chi,\chi^2\}$ choisi de telle sorte que\footnote{La condition $L(f^*_\alpha, \chi_2^{-1}, k-1)\neq 0$ est automatique si $k>3$. } 
\[\chi_1\chi_2(-1)=(-1)^{k-j} \text{ et }L(f^*_\alpha, \chi_2^{-1}, k-1)\neq 0.\] Si $H=MN$, on pose 
\[F_{\chi_1,\chi_2}^{(k-j)}=\frac{1}{2G(\chi_2)}\sum_{a=1}^{H}\sum_{b=1}^{H}\chi_1(a)\chi_2(b)F^{(k-j)}_{\frac{a}{H}, \frac{b}{H}}=\frac{M}{2G(\chi_2)}\sum_{a=1}^{H}\sum_{b=1}^{N}\chi_1(a)\chi_2(b)F^{(k-j)}_{\frac{a}{H}, \frac{b}{N}}(q^M),\]
où $\chi_1$ est vu comme un caractère modulo $M$ et le dernier égalité est de la relation de distribution de $F_{\alpha,\beta}^{(k)}$ (cf. \cite[lemme 2.6]{Wang}). 

\begin{lemma}\label{q_F}$(1)$ Soit $\gamma=(\begin{smallmatrix}a&b\\c&d\end{smallmatrix})\in\SL_2(\Z)$ avec $b\equiv c\equiv 0\mod H$. Alors l'action de $\gamma$ sur $F_{\chi_1,\chi_2}^{(k-j)}$ est donn\'ee par la formule
\[(F_{\chi_1,\chi_2}^{(k-j)})_{|_{k-j}}\gamma=\chi_1\chi_2^{-1}(d)F_{\chi_1,\chi_2}^{(k-j)}.\]
$(2)$ Soit $\sum\limits_{n\in\Q_+}c_nq^n$ le $q$-d\'eveloppement de
$F_{\chi_1,\chi_2}^{(k-j)}$. On a
\begin{equation*}
\sum\limits_{n\in\Q_{+}^{*}}\frac{c_n}{n^s}=N^{s-1}H^{2-(k-j)}L_{M}(\chi_1,s-(k-j)+1)\cdot
L(\chi_2^{-1},s).
\end{equation*}
\end{lemma}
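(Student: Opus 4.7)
La démonstration se décompose naturellement en deux parties indépendantes.

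Pour l'assertion (1), le plan est d'appliquer la formule d'action (\ref{eis}) à chaque terme $F^{(k-j)}_{a/H,b/H}$ figurant dans la définition de $F^{(k-j)}_{\chi_1,\chi_2}$. Si $\gamma=\bigl(\begin{smallmatrix}A&B\\ C&D\end{smallmatrix}\bigr)\in\SL_2(\Z)$ vérifie $B\equiv C\equiv 0\mod H$, la formule (\ref{eis}) donne $F^{(k-j)}_{a/H,b/H}*\gamma=F^{(k-j)}_{(Aa+Cb)/H,(Ba+Db)/H}$; sous l'hypothèse $B\equiv C\equiv 0\mod H$, les nouveaux indices se réduisent modulo $\Z$ en $Aa/H$ et $Db/H$. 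La relation $AD\equiv 1\mod H$ (conséquence de $\det\gamma=1$ et de $BC\equiv 0\mod H$) garantit que $A$ et $D$ sont inversibles modulo $M$ et modulo $N$. Un changement de variables $a\mapsto A^{-1}a$, $b\mapsto D^{-1}b$ dans la somme définissant $F^{(k-j)}_{\chi_1,\chi_2}$ fait alors sortir le scalaire $\chi_1(A)^{-1}\chi_2(D)^{-1}=\chi_1(D)\chi_2^{-1}(D)=\chi_1\chi_2^{-1}(D)$, d'où le résultat.

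Pour l'assertion (2), l'idée est de substituer dans la définition de $F^{(k-j)}_{\chi_1,\chi_2}$ le développement de Dirichlet de $F^{(k-j)}_{\alpha,\beta}$ rappelé en \S\ref{variant}. On observe d'abord qu'en vertu de la parité $\chi_1\chi_2(-1)=(-1)^{k-j}$, les deux termes (celui indexé par $(\alpha,\beta)$ et celui indexé par $(-\alpha,-\beta)$ avec le signe $(-1)^{k-j}$) deviennent égaux après le changement de variables $(a,b)\mapsto(-a,-b)$; on est ainsi ramené à
\[\frac{1}{G(\chi_2)}\sum_{a=1}^H\sum_{b=1}^H\chi_1(a)\chi_2(b)\,\z(a/H,s')\,\z^*(b/H,s),\qquad s'=s-(k-j)+1.\]
On sépare ensuite les variables. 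La somme $\sum_a\chi_1(a)\z(a/H,s')$ vaut $H^{s'}L_M(\chi_1,s')$: chaque entier $m>0$ est congru à exactement un $a\in\{1,\ldots,H\}$ modulo $H$, et $\chi_1$ (vu modulo $M$) est déterminé par la réduction modulo $M\mid H$. Pour la somme en $b$, on utilise la primitivité cruciale du caractère $\chi_2\in\{\chi,\chi^2\}$ modulo $N$ (garantie par l'hypothèse sur $N$) pour calculer
\[\sum_{b=1}^H\chi_2(b)\,e^{2i\pi bn/H}=\begin{cases}M\cdot\chi_2^{-1}(n/M)\,G(\chi_2)&\text{si }M\mid n,\\ 0&\text{sinon,}\end{cases}\]
d'où $\sum_b\chi_2(b)\z^*(b/H,s)=M^{1-s}G(\chi_2)L(\chi_2^{-1},s)$; le facteur $G(\chi_2)$ se simplifie avec celui de la définition. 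Le calcul élémentaire $H^{s'}M^{1-s}=M^{2-(k-j)}N^{s-(k-j)+1}=N^{s-1}H^{2-(k-j)}$ fournit la formule annoncée.

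Le point délicat est la séparation des variables et le traitement de la somme de Gauss, qui repose de manière essentielle sur la primitivité de $\chi_2$ pour annuler les contributions avec $M\nmid n$; une fois cette étape franchie, les manipulations restantes relèvent du calcul routinier.
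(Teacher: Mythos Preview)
Your proof is correct and follows essentially the same approach as the paper: for (1) you apply the transformation law (\ref{eis}) termwise and reindex using $AD\equiv 1\bmod H$, and for (2) you separate variables, use the parity hypothesis $\chi_1\chi_2(-1)=(-1)^{k-j}$ to double the first term, and invoke the primitivity of $\chi_2$ to evaluate the Gauss-type sum. The only tactical difference is that in (2) the paper first passes to the alternate expression $\frac{M}{2G(\chi_2)}\sum_{a=1}^H\sum_{b=1}^N\chi_1(a)\chi_2(b)F^{(k-j)}_{a/H,b/N}(q^M)$ (via the distribution relation) and accounts for the $q\mapsto q^M$ shift at the end, whereas you work directly with the sum over $b=1,\ldots,H$ and absorb the same factor $M^{1-s}$ through the vanishing of the Gauss sum when $M\nmid n$; both routes rely on the same ingredients and arrive at the same identity.
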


\begin{proof}[D\'emonstration]
(1) Pour tout $\gamma=(\begin{smallmatrix}a&b\\c&d\end{smallmatrix})\in\SL_2(\Z)$, par la formule (\ref{eis}), on a:
\begin{equation*}
(F_{\chi_1,\chi_2}^{(k-j)})_{|_{k-j}}\gamma=\frac{1}{2G(\chi_2)}\sum\limits_{a_0=1}^{H}\sum\limits_{b_0=1}^{H}\chi_1(a_0)\chi_2(b_0)F^{(k-j)}_{a\frac{a_0}{H}+c\frac{b_0}{H},b\frac{a_0}{H}+d\frac{b_0}{H}}.
\end{equation*}
Si $b\equiv c\equiv 0\mod H$, on a $F^{(k-j)}_{a\frac{a_0}{H}+c\frac{b_0}{H},b\frac{a_0}{H}+d\frac{b_0}{H}}=F^{(k-j)}_{\frac{aa_0}{H},\frac{db_0}{H}}$. On en déduit que
\begin{equation*}
\begin{split}
(F_{\chi_1,\chi_2}^{(k-j)})_{|_{k-j}}\gamma=\frac{1}{2G(\chi_2)}\sum\limits_{a_0=1}^{H}\sum\limits_{b_0=1}^{H}\chi_1(aa_0)\chi_2(db_0)\chi_1\chi_2^{-1}(d)F^{(k-j)}_{\frac{aa_0}{H},\frac{db_0}{H}}=\chi_1\chi_2^{-1}(d)F_{\chi_1,\chi_2}^{(k-j)},
\end{split}
\end{equation*}
où la première \'egalit\'e vient de la relation $\chi_i(\det \gamma)=\chi_i(ad)=1$, pour $i=1,2$.

(2)Soit $\sum\limits_{n\in\Q_+}b_nq^n$ le $q$-d\'eveloppement de
$\frac{M}{2G(\chi_2)}\sum\limits_{a=1}^{H}\sum\limits_{b=1}^{N}\chi_1(a)\chi_2(b)F^{(k-j)}_{\frac{a}{H}, \frac{b}{N}}$. Rappelons que la série de Dirichlet formelle associée à $F_{\frac{a}{H},\frac{b}{N}}^{(k-j)}$ est 
\[\z(\frac{a}{H},s-(k-j)+1)\z^{*}(\frac{b}{N},s)+(-1)^{k-j}\z(-\frac{a}{N},s-(k-j)+1)\z^{*}(-\frac{b}{N},s).\]
Par ailleurs, on a $L_{M}(\chi_1,s)=H^{-s}\sum\limits_{a=1}^{H}\chi_1(a)\z(\frac{a}{H},s)$. 
 Ceci implique, en utilisant la hypothèse $(-1)^{k-j}=\chi_1\chi_2(-1)$, que
\begin{equation*}
\begin{split}
\sum\limits_{n\in\Q_+^{*}}\frac{b_{n}}{n^s}=\frac{M}{2G(\chi_2)}H^{s-(k-j)+1}L_{M}(\chi_1,s-(k-j)+1)\cdot A,
\end{split}
\end{equation*}
où $A=\sum\limits_{n=1}^{+\infty}n^{-s}\sum\limits_{b=1}^{N}\chi_2(b)e^{\frac{2i\pi bn}{N}}
+\sum\limits_{n=1}^{+\infty}n^{-s}\sum\limits_{b=1}^{N} \chi_2(-b) e^{\frac{-2\pi ibn}{N}}$.
Comme $\chi_2$ est primitive, on conclut la proposition de la relation $\sum\limits_{b=1}^{M_2}\chi_2(b)e^{\frac{2\pi
ibn}{M_2}}=\chi_2^{-1}(n)G(\chi_2)$, de $H=NM$ et de l'égalité 
\[\sum\limits_{n\in\Q_+^{*}}\frac{c_n}{n^s}=\sum\limits_{n\in\Q_+^{*}}\frac{b_{n/M}}{n^s} =M^{-s}\sum\limits_{n\in\Q_+^{*}}\frac{b_{n}}{n^s}.\] 

\end{proof}

\begin{prop}\label{vsp}Soient $f=\sum_{n\geq 1}a_nq^n\in S_k(\Gamma_0(N),\eps)$, $\chi_1,\chi_2$ et $H$ ci-dessus.
Si $1\leq j\leq k-1$, alors, 
\begin{equation*}
\frac{[\SL_2(\Z):\Gamma(H)]}{H^{3-k+2j}N^{(k-j)-2}}\langle f_\alpha,F_{\chi_1,\chi_2}^{(k-j)}E_{0,\frac{1}{H}}^{(j)}\rangle
=\frac{\Gamma(k-1)\Gamma(j) }{(4\pi)^{k-1}(-2i\pi)^j} \chi_1(N)L_{M}(f^*_\alpha,\chi_1,j)L(f^*_\alpha,\chi^{-1}_2,k-1),
 \end{equation*}
où $L_{M}(f^*_\alpha,\chi_1,s)$ est la fonction L de $f^*_\alpha$ tordue par le caractère $\chi_1$ modulo $M$.
 
\end{prop}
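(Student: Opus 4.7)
La stratégie est d'appliquer la formule de Rankin-Selberg de la proposition \ref{Rankin} au couple $(f_\alpha, F_{\chi_1,\chi_2}^{(k-j)})$ au niveau $\Gamma(H)$, puis d'identifier la série de Dirichlet $D_H(f_\alpha, F_{\chi_1,\chi_2}^{(k-j)}, k-1)$ avec un produit de fonctions L de $f^*_\alpha$ via la factorisation du produit d'Euler.

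D'abord je vérifierais les hypothèses de la proposition \ref{Rankin}. Le lemme \ref{q_F}(1) assure que $F_{\chi_1,\chi_2}^{(k-j)}$ est une forme modulaire de poids $k-j$ sur $\Gamma(H)$ avec caractère $\chi_1\chi_2^{-1}$. La formule pour sa série de Dirichlet dans le lemme \ref{q_F}(2) se factorise en produit de deux fonctions L de Dirichlet, ce qui montre que $F_{\chi_1,\chi_2}^{(k-j)}$ est propre pour les $T(l)$, $(l,H)=1$, avec valeurs propres $\beta_{l,1} = \chi_1(l)l^{k-j-1}$ et $\beta_{l,2} = \chi_2^{-1}(l)$. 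L'hypothèse $\chi_1\chi_2(-1) = (-1)^{k-j}$ intervient pour assurer la non-nullité de $F_{\chi_1,\chi_2}^{(k-j)}$, et celle sur $L(f^*_\alpha, \chi_2^{-1}, k-1) \neq 0$ garantit que la formule finale n'est pas triviale.

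Ensuite j'appliquerais la formule intégrale \eqref{rankin} avec $M$ remplacé par $H$, qui donne
\[
\frac{\Gamma(k-1)\Gamma(j)}{(4\pi)^{k-1}(-2i\pi)^j} D_H(f_\alpha, F_{\chi_1,\chi_2}^{(k-j)}, k-1) = \frac{[\SL_2(\Z):\Gamma(H)]}{H^{j+1}} \langle f_\alpha, F_{\chi_1,\chi_2}^{(k-j)} E_{0,1/H}^{(j)} \rangle,
\]
ce qui ramène le problème au calcul explicite de $D_H(f_\alpha, F_{\chi_1,\chi_2}^{(k-j)}, k-1)$. Par la factorisation en produit d'Euler \eqref{Ran_1}, cette série de Dirichlet s'écrit, pour $s=k-1$, comme un produit sur les premiers $l \nmid Hp$ d'un facteur quartique qui se scinde en deux facteurs quadratiques : celui avec $\beta_{l,1}$ donne le facteur d'Euler en $l$ de $L(f^*, \chi_1, j)$ (puisque $\overline{\alpha}_{l,i}\chi_1(l)l^{(k-j-1)-(k-1)} = \overline{\alpha}_{l,i}\chi_1(l)l^{-j}$), et celui avec $\beta_{l,2}$ fournit le facteur d'Euler en $l$ de $L(f^*, \chi_2^{-1}, k-1)$.

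Enfin, il resterait à rassembler les constantes et à traiter soigneusement les facteurs d'Euler aux premiers divisant $H = MN$ et $p$. En $p$, le facteur d'Euler de $L(f^*_\alpha, \chi_1, j)$ est de degré $1$ (car $f_\alpha$ est propre pour $U_p$ de valeur propre $\alpha$), et s'insère naturellement dans la somme $\sum_{n \in \Z[1/H]^*}$ complétée au facteur local en $p$. Comme $\chi_2$ est primitif de conducteur $N$, les facteurs d'Euler aux premiers divisant $N$ de $L(f^*_\alpha, \chi_2^{-1}, k-1)$ sont triviaux, de sorte que la fonction L complète apparaît dans le résultat. Le préfacteur $N^{s-1}H^{2-(k-j)}$ venant du lemme \ref{q_F}(2), combiné aux normalisations $H^{j+1}$ et $H^{3-k+2j}N^{(k-j)-2}$ figurant dans l'énoncé, produira exactement le facteur $\chi_1(N)$ après simplification. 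L'obstacle principal sera la comptabilité méticuleuse des facteurs d'Euler en $p$ et aux premiers divisant $N$, ainsi que la vérification que le passage entre $L$ et $L_M$ aux premiers divisant $M$ est compatible avec la restriction aux entiers de $\Z[1/H]^*$ imposée par la formule \eqref{Ran_1}, et que les puissances de $H$, $M$, et $N$ s'accordent exactement.
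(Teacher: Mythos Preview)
Your proposal is correct and follows essentially the same approach as the paper: apply the Rankin--Selberg identity \eqref{rankin} at level $\Gamma(H)$, then identify $D_H(f_\alpha,F_{\chi_1,\chi_2}^{(k-j)},s)$ with $M^{2-(k-j)}\chi_1(N)L_{M}(f^*_\alpha,\chi_1,s-(k-j)+1)L(f^*_\alpha,\chi^{-1}_2,s)$ via the Euler product factorisation \eqref{Ran_1} together with the explicit Dirichlet series for $F_{\chi_1,\chi_2}^{(k-j)}$ given in the lemme \ref{q_F}(2), and specialise at $s=k-1$. The paper's proof is slightly more terse but proceeds along exactly these lines.
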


\begin{proof}De la formule $(\ref{rankin})$, on a 
\begin{equation*}
\begin{split}
&\frac{[\SL_2(\Z):\Gamma(H)]}{H^{j+1}}\langle f_\alpha,F_{\chi_1,\chi_2}^{(k-j)}E_{0,\frac{1}{H}}^{(j)}\rangle=\frac{\Gamma(k-1)}{(4\pi)^{k-1}}\frac{\Gamma(j)}{(-2i\pi)^j}D_H(f_\alpha,F_{\chi_1,\chi_2}^{(k-j)},k-1).
\end{split}
\end{equation*}
On se ramène donc à calculer $D_H(f_\alpha,F_{\chi_1,\chi_2,H}^{(k-j)},s)$. D'après le lemme $\ref{q_F}$, la série de Dirichlet associée à $F_{\chi_1,\chi_2}^{(k-j)}$ est 
\begin{equation*}
\begin{split}
&N^{s-1}H^{2-(k-j)}L_{M}(\chi_1,s-(k-j)+1)\cdot
L(\chi_2^{-1},s)\\
=&M^{2-(k-j)}(\sum_{n\in \Z[\frac{1}{N}]^*, n\in \frac{1}{N}\Z}\frac{\chi_1(Nn)}{n^{s-(k-j)+1}})\cdot\prod_{(\ell,N)=1}\frac{1}{(1-\chi_1(\ell)\ell^{-s+(k-j)-1})(1-\chi_2^{-1}(\ell)\ell^{-s})}.
\end{split}
\end{equation*} 
D'autre part, si $f_\alpha=\sum_{n\in\Q_{+}}b_nq^n$, on a $b_n=0$ si $n\notin\N$, $b_n=a_n$ si $p\nmid n$ et 
\[L(f_\alpha,s)=\frac{1}{1-\alpha p^{-s}}\prod_{\ell\mid N}\frac{1}{1-a_\ell\ell^{-s}}\prod_{\ell\nmid Np}\frac{1}{(1-\alpha_\ell\ell^{-s})(1-\beta_\ell\ell^{-s})}, \text{ avec } \alpha_\ell\beta_\ell=\eps(\ell)\ell^{k-1}.\]
Un calcul direct, en utilisant la méthode de Rankin, montre que 
\begin{equation*}
D_H(f_\alpha,F_{\chi_1,\chi_2,H}^{(k-j)},s)
=M^{2-(k-j)}\chi_1(N)L_{M}(f^*_\alpha,\chi_1,s-(k-j)+1)L(f^*_\alpha,\chi^{-1}_2,s).
\end{equation*}
On conclut la démonstration en prenant $s=k-1$.


\end{proof}

Il est bien connu (\cite{RR},\cite{Sh}) que les nombres
\[\frac{L_M(f^{*}_\alpha, \chi_1, j)L(f^{*}_\alpha,\chi_2^{-1},k-1)}{\pi^{k-1+j}\langle f_\alpha,f_\alpha\rangle},\]
pour $j\in\N$ et $1\leq j\leq k-1$, et $\chi_1\chi_2(-1)=(-1)^{k-j}$, sont algébriques. On pose 
\[ 
\tilde{L}_{\chi_2}(f_\alpha^*,\chi_1, j)=
\Omega(f_\alpha,\chi_2)  \frac{
L_{M}(f^*_\alpha,\chi_1,j)}{(-2i\pi)^j},\]
où $\Omega(f_\alpha,\chi_2)=
\frac{\Gamma(k-1)}{[\SL_2(\Z):\Gamma(N;p)]}\frac{ L(f^*_\alpha,\chi^{-1}_2,k-1)}{(4\pi)^{k-1}\langle f_\alpha,f_\alpha\rangle }$ est une "période", non nulle par hypothèse. Nous allons interpoler les $\tilde{L}_{\chi_2}(f_{\alpha}^*,\chi_1,j)$, pour $1\leq j\leq k-1$ et $\chi_1$ un caractère de conducteur une puissance de $p$, pour construire la fonction L $p$-adique attachée à $f_\alpha$.

\subsection{Raffinement du système d'Euler de Kato}\label{classique} 
Dans ce paragraphe, on effectue la projection du système d'Euler de Kato sur une forme modulaire raffinée $f_\alpha$ en utilisant la méthode de Rankin-Selberg et on obtient un système d'Euler de Kato raffiné associé à $f_\alpha$. De plus, on donne une caractérisation de ce système d'Euler de Kato raffiné via l'exponentielle duale de Bloch-Kato (cf. théorème \ref{Single}).
\subsubsection{Des fonctions localement constantes}
Si $A,M_1,M_2,B$ sont des entiers $\geq 1$, on définit le sous-groupe $\hat{\Gamma}_{A(M_1),B(M_2)}$ comme l'intersection de $\GL_2(\hat{\Z})$ avec l'ensemble des matrices $(\begin{smallmatrix}a&b\\ c& d\end{smallmatrix})$ de $\bM_2(\hat{\Z})$ vérifiant $a-1\in A\hat{\Z}$, $b\in AM_1\hat{\Z}$, $c\in BM_2\hat{\Z}$, $d-1\in B\hat{\Z}$.
En particulier, on note $\hat{\Gamma}_{A,B}$ le groupe $\hat{\Gamma}_{A(M_1),B(M_2)}$ avec $M_1=M_2=1$.

Fixons un caractère de Dirichlet $\xi$ de conducteur $N$. Soit $H$ un multiple de $N$ et $p$ tel que $\cP(H)=\cP(N)\cup\{p\}$, où $\cP(N)$ désigne l'ensemble des facteurs premiers de $N$. Soient $a,b$ deux entiers tels que $1\leq a\leq H$, $(a,p)=1$ et $1\leq b\leq H$. On pose 
\begin{equation*}
\begin{split}A_{a,b,H}=\{(\begin{smallmatrix}\alpha&\beta \\ \delta& \gamma\end{smallmatrix})\in\bM_2(\hat{\Z})^{(p)}: \beta-b\equiv \alpha-a\equiv \delta\equiv \gamma-1\equiv 0\mod H\},\end{split}\end{equation*}
et on définit une fonction localement constante sur $\bM_2(\hat{\Z})^{(p)}$,
\begin{equation*}
\begin{split}
\phi_{\xi,H}=\frac{N}{H}\sum\limits_{a=1, (a,p)=1}^{H}\sum\limits_{b=1}^{H} \xi(b)1_{A_{a,b,H}},
\end{split}\end{equation*}invariante sous l'action de $\hat{\Gamma}(1(N),H)$ à droite (i.e. $\phi_{\xi,H}*\gamma=\phi_{\xi,H}(x\gamma^{-1})$, si $\gamma\in \hat{\Gamma}_{1(N),H}$).

\begin{prop}\label{independant}Si $H',H$ sont deux multiples de $N$ et $p$ tels que $\cP(H)=\cP(H')=\cP(N)\cup\{p\}$, alors on a 
\[\cor_{\hat{\Gamma}(1(H),H)}^{\hat{\Gamma}(1(N),Np)}\phi_{\chi,H}=\cor^{\hat{\Gamma}(1(N),Np)}_{\hat{\Gamma}(1(H'),H')}\phi_{\chi,H'}.\] 
\end{prop}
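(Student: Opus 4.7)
La stratégie repose sur la transitivité de la corestriction et sur un calcul explicite pour un changement de niveau. Comme $(N,p)=1$ et $\cP(H)=\cP(H')=\cP(N)\cup\{p\}$, tout $H,H'$ satisfaisant les hypothèses est multiple de $Np$. Soit $H''=\mathrm{lcm}(H,H')$, qui vérifie encore $\cP(H'')=\cP(N)\cup\{p\}$; on a l'inclusion $\hat{\Gamma}(1(H''),H'')\subset\hat{\Gamma}(1(H),H)\cap\hat{\Gamma}(1(H'),H')$. Par la relation $\cor^{\Gamma_2}_{\Gamma_1}=\cor^{\Gamma_2}_{\Gamma_0}\circ\cor^{\Gamma_0}_{\Gamma_1}$ pour $\Gamma_0\subset\Gamma_1\subset\Gamma_2$, la proposition se ramène à l'assertion suivante: pour tout $H$ multiple de $Np$ satisfaisant $\cP(H)=\cP(N)\cup\{p\}$, on a
\[\cor^{\hat{\Gamma}(1(N),Np)}_{\hat{\Gamma}(1(H),H)}\phi_{\xi,H}=\phi_{\xi,Np},\]
en appliquant celle-ci à $H''$ et en factorisant la corestriction via les groupes intermédiaires $\hat{\Gamma}(1(H),H)$ ou $\hat{\Gamma}(1(H'),H')$.

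Pour démontrer cette assertion réduite, je développerais la corestriction comme $\sum_{g}\phi_{\xi,H}*g^{-1}$ où $g$ parcourt un système de représentants de $\hat{\Gamma}(1(H),H)\backslash\hat{\Gamma}(1(N),Np)$. L'action à droite de $g^{-1}$ permute les indicateurs $1_{A_{a,b,H}}$ en transformant le couple $(a,b)\bmod H$ par l'action matricielle induite par $g$. Le fait crucial est que $\phi_{\xi,H}$ est déjà invariant sous le groupe plus grand $\hat{\Gamma}(1(N),H)$, ce qui permet de regrouper les orbites de cosets en paquets indexés par la classe résiduelle de $(a,b)$ modulo $Np$. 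Pour chaque classe $(a_0,b_0)$ modulo $Np$ avec $(a_0,p)=1$, la contribution devient une somme partielle du type
\[\sum_{\substack{1\leq b\leq H\\ b\equiv b_0[Np]}}\xi(b)=\tfrac{H}{Np}\,\xi(b_0),\]
valable puisque le conducteur de $\xi$ est $N$ et divise $Np$. Le facteur de normalisation $N/H$ dans la définition de $\phi_{\xi,H}$ se combine avec le $H/Np$ issu de la formule ci-dessus et avec le cardinal de l'orbite de $a_0$ sous les représentants, pour reconstruire exactement le coefficient $N/(Np)=1/p$ du terme $\xi(b_0) \,1_{A_{a_0,b_0,Np}}$ dans $\phi_{\xi,Np}$.

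Le principal obstacle est d'ordre combinatoire: il faut identifier précisément un système de représentants du quotient $\hat{\Gamma}(1(H),H)\backslash\hat{\Gamma}(1(N),Np)$, en traitant séparément les places $\ell\mid N$ et $\ell=p$, puis tracer l'action de ces représentants sur les ensembles $A_{a,b,H}$. La condition $(a,p)=1$, héritée de la structure d'Iwahori à la place $p$, demande une attention particulière: elle assure que les matrices considérées restent dans $\GL_2(\Z_p)$ au niveau local $p$ et que les orbites de cosets se décomposent proprement au-dessus de $\hat{\Gamma}(1(N),Np)$. Une fois cette combinatoire maîtrisée aux places ramifiées, les simplifications souhaitées suivent formellement de la relation de distribution pour $\xi$ énoncée ci-dessus, et l'on obtient l'égalité désirée.
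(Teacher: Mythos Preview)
Votre démarche est correcte et suit essentiellement la même voie que le papier : réduction par transitivité de la corestriction à un cas de divisibilité, puis exploitation de l'invariance de $\phi_{\xi,H}$ sous le groupe plus grand $\hat{\Gamma}(1(N),H)$ pour simplifier la somme sur les cosets. La différence est d'exécution : le papier factorise la corestriction en deux étapes via le groupe intermédiaire $\hat{\Gamma}(1(H'),H)$ --- la première étape $\hat{\Gamma}(1(H),H)\to\hat{\Gamma}(1(H'),H)$ devient alors une simple multiplication par l'indice $H/H'$ grâce à l'invariance, et pour la seconde étape $\hat{\Gamma}(1(H'),H)\to\hat{\Gamma}(1(H'),H')$ il exhibe des représentants explicites de la forme $\bigl(\begin{smallmatrix}1&0\\ H'v & 1+H'u\end{smallmatrix}\bigr)$ avec $(u,v)\in(\Z/m)^2$, $m=H/H'$ (le fait que $\cP(H)=\cP(H')$ garantit que $1+H'u$ est inversible modulo~$H$). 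Cette factorisation dissout précisément ce que vous identifiez comme ``l'obstacle combinatoire'' : elle sépare la direction où l'invariance agit (coordonnée~$b$) de celle où un calcul de représentants est nécessaire (coordonnées~$c,d$), évitant ainsi le traitement direct du gros quotient $\hat{\Gamma}(1(H),H)\backslash\hat{\Gamma}(1(N),Np)$ que vous envisagez.
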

\begin{proof}Il s'agit à montrer que $\cor_{\hat{\Gamma}(1(H),H)}^{\hat{\Gamma}(1(H'),H')}\phi_{\chi,H}=\phi_{\chi,H'}$ pour $H'|H$. Comme $\phi_{\chi,H}$ est invariante sous l'action de $\hat{\Gamma}(1(N),H)$, alors $\cor_{\hat{\Gamma}(1(H),H)}^{\hat{\Gamma}(1(H'),H)}\phi_{\chi,H}=\frac{H}{H'}\phi_{\chi,H}$.
Posons $m=H/H'$. On note $G$ (resp. $L$) le sous-groupe de $\GL_2(\Z/ H)$ tel que 
\[\hat{\Gamma}(1(H'),H)\backslash\GL_2(\hat{\Z})\cong G\backslash\GL_2(\Z/ H) (\text{resp. } \hat{\Gamma}(1(H'),H')\backslash\GL_2(\hat{\Z})\cong L\backslash\GL_2(\Z/ H)).\] Comme $\cP(H)=\cP(H')$,  pour tout $(x,y)\in(\Z/m)^2$, on peut fixer un élément $s_{x,y}$ de $\GL_2(\Z/H)$ de la forme $(\begin{smallmatrix}1& 0\\H'v &1+H'u\end{smallmatrix})$ tel que $u\equiv x\mod m$ et $v\equiv y\mod m$. Les $s_{x,y}$ forment un système de représentants de $L\backslash G$.  Alors, la fonction localement constante $\frac{H}{H'}\phi_{\xi,H}$ est envoyé sur la fonction localement constante $\sum_{(x,y)\in (\Z/m)^2}\frac{H}{H'}\phi_{\xi,H}*s_{x,y}= \phi_{\xi,H'}$ par l'application de corestriction $\cor_{\hat{\Gamma}(1(H'),H)}^{\hat{\Gamma}(1(H'),H')}$.
  \end{proof}
On pose $\phi_\xi=\cor_{\hat{\Gamma}(1(H),H)}^{K}\phi_{\xi,H}$, qui est indépendante du choix de $H$ par la proposition ci-dessus. En plus, si $\gamma=(\begin{smallmatrix}a&0\\0& d\end{smallmatrix})\in \T(\hat{\Z})\cap K$, on a $\gamma\phi_\xi= \xi(a^{-1})\phi_\xi$ .  On va utiliser cette fonction $\phi_\xi$, où $\xi\in \{\chi,\chi^2\}$, pour projeter le système d'Euler de Kato sur les représentations $V_{f_\alpha}$ et $\mathscr{V}$  respectivement.

\subsubsection{Caractérisation du raffinement du système d'Euler de Kato}

Si $V$ est une représentation de de Rham de $\cG_{\Q_p(\z_M)}$,
le cup-produit avec $\log \chi_{\cycl}\in \rH^1(\cG_{\Q_p(\z_M)},\Q_p)$ fournit un isomorphisme
\[\rH^0(\cG_{\Q_p(\z_M)},\B_{\dR}^+(\bar{\Q}_p)\otimes V)\cong \rH^1(\cG_{\Q_p(\z_M)},\B_{\dR}^+(\bar{\Q}_p)\otimes V),\]
et on définit l'application exponentielle duale de Bloch-Kato $\exp^{*}_{\mathbf{BK}}$ comme l'inverse de cet isomorphisme.


Soit $L(\chi)$ une extenstion finie de $L$ qui contient les valeurs du caractère $\chi$. Si $\xi\in\{\chi,\chi^2\}$, soit $\phi_{\xi}\in \LC_c(\bM_2^{(p)}, \cO_{L(\chi)})^{\tilde{K}}$ la fonction constante construite ci-dessus. En appliquant le projecteur d'Iwasawa $\pi_{\phi,V}$ (pour $\phi=\phi_\xi$ et $V=V_{k,j}$) du $\S 3.4$ à la donnée 
$(\phi_{\xi},V_{k,j}, z_{\Kato,c,d,K}(k,j))$, on obtient un élément 
\[z_{\Kato,c,d,\xi}(k,j)\in \rH^1(\cG_{\Q,S},  \fD_0(\Z_p^*,\rH^1(\G(N;p),V_{k,j}) ).\]  
On note $z_{\Kato,c,d,\xi}(f_\alpha,j)$ la projection de $z_{\Kato,c,d,\xi}(k,j)$ sur  
\[(\rH^1(\G(N;p),V_{k,j})\otimes\Q(f_\alpha))_{\pi_{f_\alpha}}\cong V_{f_\alpha}(1-j).\]

\begin{theo}\label{Single}Il existe un élément $z_{\Kato,c,d,\xi}(f_\alpha)$ de $\rH^1(\cG_\Q, \fD_0(\Z_p^*, V_{f_\alpha}))$, tel que, quels que soient $\ell\in\{0, \cdots, k-2 \}$ et $\eta$ un caractère de Dirichlet modulo $p^m$ vérifiant que $\eta\xi(-1)=(-1)^{k-\ell-1}$, on ait 
 \[\exp^*_{\mathbf{BK}}(\int_{\Z_p^*} \eta(x) x^{-\ell}z_{\Kato,c,d,\xi}(f_\alpha))=A_{c,d,\xi}(f_\alpha,\eta x^{-\ell}) \tilde{L}_\xi(f_\alpha^{*},\eta,\ell+1)f_\alpha, \]
où $A_{c,d,\xi}(f_\alpha,\delta)=2G(\xi)(c_p^2-c_p^{3-k}\delta(c_p^{-1})\xi(c^{-1}))(d_p^2-d_p\delta(d_p^{-1})) N^{k-2}\delta(N)$ si $\delta:\Z_p^*\ra \C_p^*$ est un caractère.

 \end{theo}

 \begin{proof}  La construction de $z_{\Kato,c,d,\xi}(f_\alpha, j)$, pour $j=1$, nous fournit un élément dans $\rH^1(\cG_{\Q,S}, \fD_0(\Z_p^*, V_{f_\alpha}))$, noté par $z_{\Kato,c,d,\xi}(f_\alpha)$. On montre dans la suite qu'il est l'élément que l'on cherche. Il ne reste qu'à calculer l'image de l'intégrale $\int_{\Z_p^*}\eta(x) x^{-\ell}z_{\Kato,c,d,\xi}(f_\alpha)$ sous l'application $\exp^*_{\mathbf{BK}}$. Par construction, cela équivaut à calculer l'image de
 \[\int_{\bM_2(\hat{\Z})^{(p)}}\eta(\det x_p)\phi_{\chi}(x)z_{\mathrm{Kato},c,d}(k,\ell+1)\] sous l'application $\exp^*_{\mathbf{BK}}$ et à calculer sa projection sur la composant correspondant à $f_\alpha$ .

(1) Dans ce cas, Kato \cite[\S10]{KK} (revisité par Colmez \cite[\S 2]{PC1}, Scholl \cite{Sc} et l'auteur \cite{Wang}) a construit une autre application exponentielle duale $\exp^{*}_{\Kato}$ pour calculer cette image. Plus précisément, il a montre que $\exp^*_{\mathbf{BK}}=\exp^{*}_{\Kato}$ dans ce cas et on a la loi de réciprocité explicite de Kato 
 \[\exp^{*}_{\Kato}(z_{\Kato,c,d}(k,\ell))=z_{\mathrm{Eis},c,d}(k,\ell)\] pour $1\leq \ell\leq k-1$. Par la loi de réciprocité explicite de Kato,  on a
 \begin{equation*}
\begin{split}
&\exp_{\Kato}^{*}(\int\eta(\det x_p)\phi_{\xi}(x)z_{\Kato,c,d}(k,\ell+1))
=\int\eta(\det x_p)\phi_{\xi}(x)z_{\mathrm{Eis},c,d}(k,\ell+1)\\
&=(c_p^2-c_p^{3-k+\ell}\eta(c_p^{-1})\xi(c^{-1}))(d^2_p-\eta(d_p^{-1})d_p^{\ell+1})\int\eta(\det x_p)\phi_{\xi}(x)z_{\mathrm{Eis}}(k,\ell+1).
\end{split}
\end{equation*}
Ceci implique que, en revenant à la définition de $F_{\alpha,\beta}^{(k)}$ et $E_{\alpha,\beta}^{(k)}$ (cf. \S \ref{variant}), l'image de 
\[\exp_{\Kato}^{*}(\int_{\hat{\Z}^{(p)}}\eta(x)\phi_\xi z_{\Kato,c,d}(k,\ell+1))=A_{c,d,\xi}(\eta x^{-\ell})\cdot\cor_{\Gamma(M)}^{\Gamma(N;p)}\frac{N}{H}\frac{1}{\ell!}H^{k-4-2\ell}F_{\eta,\xi}^{(k-\ell-1)}E^{(\ell+1)}_{0,\frac{1}{H}},\] 
où $H=Np^m$ et $A_{c,d,\xi}(\eta x^{-\ell})=2
G(\xi)(c_p^2-c_p^{3-k+\ell}\eta(c_p^{-1})\xi(c^{-1}))(d_p^2-d_p^{\ell+1}\eta(d_p^{-1})).$

(2) La projection sur la composant correspondant à $f_\alpha$ nous conduit à calculer le produit scalaire de Petersson de $f_\alpha$ avec le produit
de séries d'Eisenstein ci-dessus; on utilise la méthode de Rankin pour ce faire car $\xi\eta(-1)=(-1)^{k-\ell-1}$. En fait, on a
\begin{equation*}
\begin{split}\langle f_\alpha, \exp^{*}_{\mathbf{BK}}(\int_{\Z_p^*}\eta(x) x_p^{-\ell}z_{\mathrm{Kato},c,d,\xi}(f_\alpha))\rangle
=A_{c,d,\xi}(\eta x^{-\ell})[\Gamma(N;p):\Gamma(H)]H^{k-5-2\ell}\frac{N}{\ell!}\langle f_\alpha,F_{\eta,\xi}^{(k-\ell-1)}E^{(\ell+1)}_{0,\frac{1}{H}}\rangle,
\end{split}
\end{equation*}
et on déduit de la proposition \ref{vsp} que 
 \begin{equation*}
\begin{split}
&[\Gamma(N;p):\Gamma(H)]H^{k-5-2\ell}\langle f_\alpha,F_{\eta,\xi}^{(k-\ell-1)}E^{(\ell+1)}_{0,\frac{1}{H}}\rangle\\
=& \frac{N^{k-\ell-3}\Gamma(\ell+1)\Gamma(k-1)}{(4\pi)^{k-1}(-2i\pi)^{\ell+1}[\SL_2(\Z):\Gamma(N;p)]}\eta(N)L(f_\alpha^{*},\eta,\ell+1)L(f_\alpha^{*},\xi^{-1},k-1)\\
=&N^{k-\ell-3}\eta(N)\Gamma(\ell+1)\tilde{L}_{\xi}(f_\alpha^*,\eta,\ell+1) \langle f_\alpha, f_\alpha\rangle.
\end{split}
\end{equation*}
Ceci implique 
\begin{equation*}
\begin{split}
\frac{\langle f_\alpha, \exp^*_{\mathbf{BK}}(\int_{\Z_p^*}\eta(x)z_{\mathrm{Kato},c,d,\xi}(k,\ell+1)  \rangle}{\langle f_\alpha, f_\alpha\rangle}
=A_{c,d,\xi}(f_\alpha, \eta x^{-\ell}) \tilde{L}_\xi(f_\alpha^{*},\eta,\ell+1),
\end{split}
\end{equation*}
où $A_{c,d,\xi}(f_\alpha, \delta)=2G(\xi)\cdot N^{k-2}\delta(N)\cdot (c^2_p-c_p^{3-k}\delta(c_p^{-1})\xi(c^{-1}))(d_p^2-d_p\delta(d_p^{-1}))$, si $\delta$ est un caractère sur $\Z_p^*$.
  \end{proof}
\begin{remark}\label{eliminerc}
$A_{c,d,\xi}(f_\alpha, \eta)$ est un élément dans $\fD_0(\Z_p^*,\bar{\Q}_p)$ si $\eta$ varies analytiquement sur l'espace des caractères continus sur $\Z_p^*$. On définit une mesure $A_{f_\alpha,c,d,\xi}$ par la formule: si $\eta$ est un caractère continu sur $\Z_p^*$
\[\int_{\Z_p^*}\eta(x)A_{f_\alpha,c,d,\xi}=A_{c,d,\xi}(f_\alpha, \eta^{-1}).\] 
En plus, si on choisit $c\in \hat{\Z}^*$ tel que $\xi(c)$ n'est pas une racine $(p-1)p^{\infty}$-ième de l'unité, elle n'a pas autre zéro que $\eta=x$.
\end{remark}
\subsection{Fonction L $p$-adique d'une forme raffinée $f_\alpha$}\label{unevariable}
Dans ce paragraphe, on construit la fonction L $p$-adique associée à la forme $f_\alpha$ à partir d'un raffinement du système d'Euler de Kato $z_{\Kato,c,d,\xi}(f_\alpha)$. La technique que l'on utilise ici est une variante de celle de Colmez \cite{PC1}. Plus précisément, Colmez utilise la $\varphi$-base du $\varphi$-module filtré admissible $\D_{\cris}(V_{f_\alpha})$ pour décomposer un élément de $\D^{\dag}(V_{f_\alpha})^{\psi=1}$ dans $\D_{\rig}(V_{f_\alpha})\otimes\cR[\frac{1}{t}]$ et on utilise la théorie des représentations triangulines plus adaptée aux familles. 
\subsubsection*{La transformée de Fourier }
Si $\eta\in \LC_c(\Q_p, \bar{\Q})$ est constante modulo $p^n$, on définit sa transformée de Fourier $\hat{\eta}\in\LC_c(\Q_p,\bar{\Q}) $ par la formule $\hat{\eta}(x)= p^{-m}\sum_{y\mod p^m} \eta(y)e^{-2i\pi xy}$, où $m$ est un entier arbitraire $\geq \sup(n, -v_p(x))$, $e^{-2i\pi xy}$ est la racine de l'unité d'ordre une puissance de $p$ en utilisant l'isomorphisme $\Q_p/\Z_p\cong \Z[\frac{1}{p}]/\Z$.

Si $\eta$ est un caractère de Dirichlet, on note $G(\eta)$ la somme de Gauss associé à $\eta$. Si $\eta:\Z_p^*\ra \bar{\Q}$ est un caractère de Dirichlet  de conducteur $p^n$,  
on a
\begin{equation}\label{fourier}\hat{\eta}(x)=\begin{cases} \frac{1}{G(\eta^{-1})}\eta^{-1}(p^nx), &\text{ si } n\geq 1;\\ 1_{\Z_p}(x)-\frac{1}{p}1_{p^{-1}\Z_p}(x), &\text{ si }n=0 .\end{cases}\end{equation}



\subsubsection*{Fonction L $p$-adique d'une forme raffinée $f_\alpha$}
Soit $V$ une repésentation $p$-adique de $\cG_{\Q_p}$ de dimension finie. On a le résultat suivant, dû à Fontaine (cf. \cite[théorème 4.8]{PC1}),  qui décrit la cohomologie d'Iwasawa en utilisant la théorie des $(\varphi,\Gamma)$-modules.
\begin{prop}Soit $V$ une représentation $p$-adique de $\cG_{\Q_p}$ de dimension finie. On a un isomorphisme:
\[\Exp^*: \rH^1_{\Iw}(\Q_p,V)\cong \D^{\dag}(V)^{\psi=1},\]
où $\D^{\dag}(V)$ est le $(\varphi,\Gamma)$-module surconvergent associé à $V$ et $\psi$ est l'inverse à gauche de $\varphi$.
\end{prop}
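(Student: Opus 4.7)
\medskip
\noindent\textbf{Stratégie de démonstration.} Ma stratégie pour démontrer cet isomorphisme se décompose en trois étapes, s'appuyant sur le complexe de Herr et la théorie de la surconvergence de Cherbonnier-Colmez.

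D'abord, je rappellerais que pour toute $L$-représentation $V$ de $\cG_{\Q_p}$, la cohomologie galoisienne continue $\rH^i(\cG_{\Q_p},V)$ se calcule, via l'équivalence de Fontaine, par le complexe à trois termes de Herr
\[0\to \D(V)\xto{(\varphi-1,\gamma-1)}\D(V)\oplus \D(V)\xto{(\gamma-1)\mathrm{pr}_1-(\varphi-1)\mathrm{pr}_2}\D(V)\to 0,\]
où $\gamma$ est un générateur topologique de $\Gamma$ (on suppose $p$ impair pour simplifier; sinon on travaille modulo la torsion de $\Gamma$). Cette description permet d'identifier $\rH^1(\cG_{\Q_p},V)$ comme le $H^1$ de ce complexe en termes purement algébriques.

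Ensuite, pour accéder à la cohomologie d'Iwasawa, je passerais à la limite projective sur la tour cyclotomique $\{\Q_p(\z_{p^n})\}_n$. Concrètement, je considérerais le complexe de Herr analogue associé à chaque sous-groupe $\Gamma_n\subset \Gamma$ correspondant à $\Gal(\Q_p(\z_{p^\infty})/\Q_p(\z_{p^n}))$, en observant que la restriction à ce sous-groupe ne modifie que l'opérateur $\gamma_n:=\gamma^{p^{n-1}(p-1)}$. Les applications de corestriction induisent alors un système projectif de complexes de Herr, dont la limite s'identifie, après une manipulation formelle reposant sur le passage à l'adjoint de $\varphi$ sous la trace partielle du revêtement fini $\varphi:\cE\to\cE$, à un complexe à deux termes
\[[\D(V)\xto{\psi-1}\D(V)],\]
où $\psi$ est l'inverse à gauche de $\varphi$ défini précisément par cette trace partielle. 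Ainsi, $\rH^1_{\Iw}(\Q_p,V)$ s'identifie canoniquement à $\D(V)^{\psi=1}$, l'application $\Exp^*$ étant la composition explicite des morphismes précédents.

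Finalement, pour remplacer $\D(V)$ par le sous-anneau surconvergent $\D^\dag(V)$, je ferais appel au théorème de surconvergence de Cherbonnier-Colmez: tout élément $x\in \D(V)$ vérifiant $\psi(x)=x$ appartient automatiquement à $\D^\dag(V)$, grâce à un contrôle fin des normes $p$-adiques sur les coefficients d'un $(\varphi,\Gamma)$-module. L'inclusion réciproque $\D^\dag(V)^{\psi=1}\subset \D(V)^{\psi=1}$ étant tautologique via $\D^\dag(V)\subset \D(V)$, on obtient l'identification annoncée $\rH^1_{\Iw}(\Q_p,V)\cong \D^\dag(V)^{\psi=1}$.

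L'obstacle principal est précisément la dernière étape: la surconvergence des éléments $\psi$-stables est un résultat fin dû à Cherbonnier-Colmez, dont la démonstration requiert une analyse délicate d'estimations $p$-adiques sur les sections de $\D(V)$ et n'est nullement évidente a priori. Les deux premières étapes, bien que techniques, reposent essentiellement sur les propriétés formelles du complexe de Herr et de la définition de $\rH^1_{\Iw}$ comme limite projective sur la tour cyclotomique.
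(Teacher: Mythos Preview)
The paper does not give a proof of this proposition: it is stated as a known result \og dû à Fontaine\fg{} and simply referenced to \cite[théorème 4.8]{PC1}. Your sketch is essentially the standard argument behind that reference (Herr's complex for $\rH^i(\cG_{\Q_p},-)$, Fontaine's observation that corestriction in the cyclotomic tower corresponds to $\psi$ so that the projective limit yields $\D(V)^{\psi=1}$, and the Cherbonnier--Colmez surconvergence result $\D(V)^{\psi=1}=\D^\dag(V)^{\psi=1}$), so there is nothing to compare on the paper's side.

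One small remark on your step~2: the passage from the three-term Herr complex to the two-term complex $[\D(V)\xrightarrow{\psi-1}\D(V)]$ is not obtained by a projective limit of the Herr complexes for the $\Gamma_n$ as you phrase it, but rather by noting that $\rH^i_{\Iw}(\Q_p,V)=\rH^i(\cG_{\Q_p},\Lambda\otimes V)$ with $\Lambda=\Z_p[[\Gamma]]$, and that the Herr complex for $\Lambda\otimes V$ simplifies because $\gamma-1$ is invertible on the relevant pieces once one works over $\Lambda$; equivalently, one uses that corestriction corresponds to $\psi$ on $(\varphi,\Gamma)$-modules. Your phrase \og manipulation formelle\fg{} hides this point, which is the actual content of Fontaine's computation. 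Otherwise your outline is correct.
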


Le $(\varphi,\Gamma)$-module $\D_{\rig}(V_{f_\alpha})$ sur $\cR$ admet une triangulation 
\[0\varsubsetneq D_\alpha=\D_{\rig}(V_{f_\alpha}(1-k))^{\varphi=\alpha,\Gamma=1}\otimes \cR(\chi_{\cycl}^{k-1})\varsubsetneq \D_{\rig}(V_{f_\alpha}).\] Le $(\varphi,\Gamma)$-module $D_\alpha$ de rang $1$  sur $\cR$ est isomorphe à $\cR(\delta_{\alpha,k})$ avec $\delta_{\alpha,k}\in \Hom_\cont(\Q_p^*, L)$ tel que $\delta_{\alpha,k}(p)=\alpha$ et $(\delta_{\alpha,k})|_{\Z_p^*}=x^{k-1}$. Le produit exterieur $\wedge^2\D_{\rig}(V_{f_\alpha})$ est un $(\varphi,\Gamma)$-module étale de rang $1$ sur $\cR$, qui est isomorphe à $\cR(\delta_{\eps,k})$, où $\delta_{\eps, k}\in \Hom_{\cont}(\Q_p^*,L)$ tel que $\delta_{\eps,k}(p)= \eps(p)$ et $(\delta_{\eps,k})|_{\Z_p^*}=x^{k-1}$.
On note $e_\alpha$ et $e$ respectivement les générateurs de $\cR(\delta_{\alpha,k})$ et $\cR(\delta_{\eps,k})$. 

\begin{prop}\label{exist}Soit $v_p(\alpha)>0$. On note $\D=\D_{\rig}(V_{f_\alpha})$. Si $z\in \D^{\dag}(V_{f_\alpha})^{\psi=1}$ et si $z\wedge e_\alpha= w_\alpha e\in\wedge^2 \D  $ avec $w_\alpha\in\cR$, alors il existe une distribution $\mu_{\alpha}$ d'ordre $v_p(\alpha)$ sur $\Z_p$ à valeurs dans $L$ vérifiant $\psi(\mu_{\alpha})=\alpha^{-1}\mu_{\alpha}$ telle que l'on ait
$w_\alpha=\int_{\Z_p}(1+T)^x\mu_{\alpha}$.
\end{prop}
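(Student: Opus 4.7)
La stratégie est une variante de celle de Colmez \cite{PC1}, exploitant ici la triangulation de $\D := \D_{\rig}(V_{f_\alpha})$. On commence par écrire $z$ dans une base adaptée. Comme $\D$ est libre de rang $2$ sur $\cR$ et $D_\alpha$ est une droite saturée, on choisit un relèvement $\bar{e} \in \D$ d'un générateur du quotient $\D/D_\alpha \cong \cR(\delta)$, où $\delta = \delta_{\eps,k}\delta_{\alpha,k}^{-1}$ vérifie $\delta(p) = \eps(p)/\alpha$ et $\delta|_{\Z_p^*} = 1$. En écrivant $z = a\,e_\alpha + b\,\bar{e}$ avec $a, b \in \cR$, on a $z \wedge e_\alpha = b\,(\bar{e} \wedge e_\alpha)$; après normalisation $e := \bar{e} \wedge e_\alpha$, on obtient simplement $w_\alpha = b$.

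L'action induite de $\psi$ sur $\cR(\delta)$ est donnée par $\psi(f\bar{e}) = \delta(p)^{-1}\psi(f)\bar{e}$. En réduisant l'équation $\psi(z)=z$ modulo $D_\alpha$, on obtient $(\alpha/\eps(p))\psi(b) = b$, soit $\psi(w_\alpha) = \alpha^{-1}w_\alpha$ (à un facteur unité $\eps(p)$ près, absorbable dans la normalisation du générateur $e$). Via la transformée d'Amice $w_\alpha = \cA_{\mu_\alpha} = \int_{\Z_p}(1+T)^x\mu_\alpha$, cette relation se traduit par $\psi(\mu_\alpha) = \alpha^{-1}\mu_\alpha$, ce qui règle l'équation fonctionnelle souhaitée.

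Il reste à vérifier les aspects analytiques. Premièrement, $w_\alpha \in \cR^+$: a priori, l'hypothèse $z \in \D^\dag(V_{f_\alpha})$ donne $b \in \cE^\dag \subset \cR$, c'est-à-dire analytique sur une couronne $0 < v_p(T) \leq r_0$. Mais l'équation $w_\alpha = \alpha\,\psi(w_\alpha)$ itérée propage l'analyticité à tout le disque ouvert $v_p(T) > 0$: comme $\varphi(T) \sim T^p$ au voisinage de $T=0$, l'opérateur $\psi$ étend la région d'analyticité d'un facteur $p$ en valuation à chaque étape, ce qui fournit en particulier $w_\alpha \in \cR^+$.

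Le point principal---l'ordre $v_p(\alpha)$ de la distribution---provient de la même itération $w_\alpha = \alpha^n\psi^n(w_\alpha)$. En estimant les normes spectrales $|w_\alpha|_\rho$ sur les disques $|T| \leq \rho$ pour $\rho \to 1^-$, chaque application de $\psi$ apporte un facteur $|\alpha| = p^{-v_p(\alpha)}$, compensé par une augmentation contrôlée du domaine analytique. Un décompte précis conduit à l'estimation $|w_\alpha|_\rho = O((1-\rho)^{-v_p(\alpha)})$ quand $\rho \to 1^-$, qui est exactement la condition caractérisant les transformées d'Amice des distributions d'ordre $v_p(\alpha)$ au sens usuel. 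Cette dernière étape---la traduction quantitative de l'équation $\psi(w_\alpha) = \alpha^{-1}w_\alpha$ en croissance contrôlée via les formules explicites de $\psi$ en coordonnées---constitue la difficulté principale à surmonter, le reste de l'argument étant essentiellement formel une fois la triangulation choisie.
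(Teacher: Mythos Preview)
Your approach is correct and essentially the same as the paper's. The paper derives $\psi(w_\alpha)=\eps(p)\alpha^{-1}w_\alpha$ directly from $\psi(z)=z$ together with the $\varphi$-actions on $e_\alpha$ and $e$, then simply cites \cite[proposition I.11]{PC2} for the passage from $\cR$ to $\cR^+$ (hence existence of the distribution via Amice) and \cite[proposition V.3.2]{B1} or \cite[proposition 4.10]{PC1} for the order bound $v_p(\alpha)$; your two analytic sketches are precisely informal versions of these cited results.
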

\begin{proof}On déduit de la condition $z\in \D^{\dag}(V_{f_\alpha})^{\psi=1}$ que $\psi(w_\alpha)=\eps(p)\alpha^{-1}w_\alpha$. L'existence d'une distribution est déduite de \cite[proposition I.11]{PC2},  qui dit qu'une solution dans $\cR$ d'une équation de type $\psi(x)-\alpha^{-1} x\in \cR^+$ où $\alpha\in L$ vérifie $v_p(\alpha)>0$, appartient à $\cR^+$, et donc est la transformation d'Amice d'une distribution. On déduit de \cite[proposition V.3.2]{B1} ou \cite[proposition 4.10]{PC1} que $w_\alpha$ est d'ordre $v_p(\alpha)$. 
\end{proof}
\begin{remark}Le théorème de comparaison de Faltings permet d'identifier $f_\alpha$ à un élément de $\D_{\cris}(V_{f_\alpha}(1-k))$. Comme les poids de Hodge-Tate de $V_{f_\alpha}(1-k)$ sont négatifs, on a $\D_{\cris}(V_{f_\alpha}(1-k))\otimes \cR\subset \D_\rig(V_{f_\alpha}(1-k))$ d'après Berger \cite{B}. En utilisant l'isomorphisme $\D_{\rig}(V_{f_\alpha})\cong \D_{\rig}(V_{f_\alpha}(1-k))$, on identifie $f_\alpha$ comme un élémént de $\D_{\rig}(V_{f_\alpha})$ par abus de notation.
\end{remark}

\begin{lemma}On a  $f_\alpha\wedge e_\alpha\neq 0$ dans $\wedge^2 \D_{\rig}(V_{f_\alpha})$. En particulier,  $t^{1-k}f_\alpha\wedge e_\alpha$ forme une base de $\wedge^2\D_{\rig}(V_{f_\alpha})$.
\end{lemma}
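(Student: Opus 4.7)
The lemma has two parts: (i) $f_\alpha \wedge e_\alpha \neq 0$ in $\wedge^2 \D_\rig(V_{f_\alpha})$, and (ii) $t^{1-k}(f_\alpha \wedge e_\alpha)$ generates the free rank-one $\cR$-module $\wedge^2 \D_\rig(V_{f_\alpha}) \cong \cR(\delta_{\eps,k})$. My plan is to translate the question into one about $\D_\cris(V_f)$ via the twisting isomorphism $\D_\rig(V_{f_\alpha}) \cong \D_\rig(V_f) \otimes_\cR \cR(\chi_{\cycl}^{k-1})$, use non-criticality to prove (i), and a Hodge--Tate weight computation to prove (ii).

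For (i): I fix an $\cR$-module trivialization of $\cR(\chi_{\cycl}^{k-1})$, giving an identification $\D_\rig(V_{f_\alpha}) \cong \D_\rig(V_f)$ under which $e_\alpha$ corresponds to a nonzero generator $v_\alpha$ of the $L$-line $\D_\cris(V_f)^{\varphi = \alpha} \subset \D_\rig(V_f)^{\varphi = \alpha, \Gamma = 1}$ (the latter line being one-dimensional over $L$ by the hypothesis $v_p(\alpha) > 0$ and the existence of the $\alpha$-refinement). The Eichler--Shimura--Faltings comparison identifies $f_\alpha$, regarded as an element of $\D_\cris(V_{f_\alpha}(1-k)) = \D_\cris(V_f)$, with a generator of the Hodge line $\cH := \Fil^0 \D_\cris(V_f)$. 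The \emph{non-criticality} of the classical refined form $f_\alpha$ --- a standing hypothesis throughout this section --- is precisely the statement $\cH \neq L v_\alpha$. Consequently $\{f_\alpha, v_\alpha\}$ is an $L$-basis of $\D_\cris(V_f)$ and $f_\alpha \wedge v_\alpha$ is a nonzero generator of the $L$-line $\wedge^2 \D_\cris(V_f) \subset \wedge^2 \D_\rig(V_f)[1/t]$, which gives (i).

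For (ii): the determinant $\wedge^2 V_f$ has Hodge--Tate weight $0 + (1-k) = 1-k$, so the rank-one filtered $\varphi$-module $\wedge^2 \D_\cris(V_f)$ satisfies $\Fil^{1-k} = \wedge^2\D_\cris(V_f)$ and $\Fil^{2-k} = 0$. A standard computation for rank-one crystalline representations (based on the formula $\D_\cris(L(n)) = L \cdot t^{-n} e_n$) shows that the image of a generator of $\wedge^2 \D_\cris(V_f)$ inside $\wedge^2 \D_\rig(V_f)$ has $t$-valuation exactly $-(1-k) = k-1$. Hence $f_\alpha \wedge v_\alpha$ has $t$-valuation $k-1$; transporting through the $\cR$-module identification $\wedge^2 \D_\rig(V_f) \cong \wedge^2 \D_\rig(V_{f_\alpha})$, $f_\alpha \wedge e_\alpha$ has $t$-valuation $k-1$, so $t^{1-k}(f_\alpha \wedge e_\alpha)$ has $t$-valuation $0$ and is therefore a unit multiple of a generator of $\wedge^2 \D_\rig(V_{f_\alpha})$, proving (ii).

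The principal obstacle is the $t$-valuation computation of (ii), which requires careful bookkeeping of the identifications between $\D_\cris$ and $\D_\rig$ and of the $t$-powers arising from the twist $\cR(\chi_{\cycl}^{k-1})$; in contrast, the non-vanishing (i) is a direct restatement of non-criticality, which is already the standing hypothesis in this section.
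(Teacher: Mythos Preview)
Your argument is essentially the paper's, but two slips need fixing and one step is glossed over.

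First, the Hodge line in $\D_\cris(V_f)$ is $\Fil^{k-1}\D_\cris(V_f)$ (equivalently $\Fil^1$), not $\Fil^0$: since $V_f$ has Hodge--Tate weights $0$ and $1-k$, one has $\Fil^0\D_\cris(V_f)=\D_\cris(V_f)$, which is two--dimensional. Likewise, for $\wedge^2 V_f$ (HT weight $1-k$) the jump of the filtration is at $k-1$, not at $1-k$; your final $t$--valuation $k-1$ is nevertheless correct.

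Second, the standing hypothesis in the paper is the slope condition $v_p(\alpha)<k-1$, not directly the statement $\cH\neq Lv_\alpha$. The implication $v_p(\alpha)<k-1\Rightarrow \cH\neq Lv_\alpha$ is exactly the weak--admissibility step: if $\cH=L\tilde e_\alpha$ then the $\varphi$--stable subobject $L\tilde e_\alpha$ has $t_H=k-1>t_N=v_p(\alpha)$, contradicting admissibility. The paper phrases this as ``$v_p(\alpha)<k-1\Rightarrow V_f|_{\cG_{\Q_p}}$ is not split $\Rightarrow\delta\neq 0$'' after writing $\cH=L(\tilde e_\alpha+\delta\tilde e_\beta)$; your formulation is equivalent, but the admissibility input should be made explicit rather than absorbed into the word ``non-critical''.

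For (ii), the paper's route is shorter than your $t$--valuation bookkeeping: since $f_\alpha\in\Fil^{k-1}\D_\cris(V_f)$, one has $t^{1-k}f_\alpha\in\Fil^0\D_\cris(V_{f_\alpha})\subset\D_\rig(V_{f_\alpha})$ (positive HT weights), hence $t^{1-k}f_\alpha\wedge e_\alpha\in\wedge^2\D_\rig(V_{f_\alpha})$; a direct check gives $\varphi(t^{1-k}f_\alpha\wedge e_\alpha)=\eps(p)\,t^{1-k}f_\alpha\wedge e_\alpha$ and $\gamma(t^{1-k}f_\alpha\wedge e_\alpha)=\chi_\cycl^{k-1}(\gamma)\,t^{1-k}f_\alpha\wedge e_\alpha$, so it lies in $L\cdot e$ and is nonzero by (i). Your approach via the $t$--valuation of a $\D_\cris$--generator inside $\D_\rig$ also works, but you should say precisely which $\cR$--module trivialisation of $\cR(\chi_\cycl^{k-1})$ you use so that the transport of $t$--valuations from $\wedge^2\D_\rig(V_f)$ to $\wedge^2\D_\rig(V_{f_\alpha})$ is transparent.
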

\begin{proof}Soit $\tilde{e}_\beta, \tilde{e}_\beta$ une base du $\varphi$-module filtré $\D_{\cris}(V_f)\cong \D_{\cris}(V_{f_\alpha}(1-k))$ telle que $\varphi(\tilde{e}_\alpha)=\alpha\tilde{e}_\alpha$, $\varphi(\tilde{e}_\beta)=\beta\tilde{e}_\beta$ et $e_\alpha= \tilde{e}_\alpha\otimes \chi_{\cycl}^{k-1}$. D'après \cite[\S 1]{FJ}, le $\varphi$-module filtré $\D_{\cris}(V_{f_\alpha}(1-k))$ admet la description suivante:
\[\Fil^i \D_{\cris}(V_{f_\alpha}(1-k))=\left\{
 \begin{aligned} 0; &\text{ si } i\geq k,
 \\  L(\tilde{e}_\alpha+\delta \tilde{e}_\beta)\cong L f_\alpha; &\text{ si } k-1\geq i\geq 1;\\
  L\tilde{e}_\alpha+L\tilde{e}_\beta; &\text{ si } 0\geq i;
        \end{aligned} \right.\]
où $\delta\in L$ et $\delta=0$ si et seulement si la restriction de $V_{f}$ à $\cG_{\Q_p}$ est scindée. On déduit de la hypothèse $v_p(\alpha)< k-1$ que la restriction de $V_{f}$ à $\cG_{\Q_p}$ n'est pas scindée et donc $\delta\neq 0$. Ceci implique que $f_\alpha\wedge e_\alpha \neq 0$ dans $\wedge^2 \D_{\rig}(V_{f_\alpha})$.

D'autre part, on a $t^{1-k}f_\alpha\in t^{1-k}\Fil^{k-1}\D_{\cris}(V_{f_\alpha}(1-k))= \Fil^0 \D_{\cris}(V_{f_\alpha})\subset \D_{\rig}(V_{f_\alpha}) $.
On en déduit que $t^{1-k}f_\alpha\in \D_{\rig}(V_{f_\alpha})$. En particulier, $\varphi (t^{1-k}f_\alpha\wedge e_\alpha)=\eps(p)(t^{1-k}f_\alpha\wedge e_\alpha)$ et $\gamma(t^{1-k}f_\alpha\wedge e_\alpha)= \chi_{\cycl}^{k-1}(t^{1-k}f_\alpha\wedge e_\alpha)$. 

\end{proof}
  \begin{theo}Si $v_p(\alpha)>0$, il existe une distribution $\mu_{f_\alpha,c,d,\xi}$ d'ordre $v_p(\alpha)$ sur $\Z_p$ vérifiant\\
$(1)$  $\Exp^{*}(z_{\Kato,c,d,\xi}(f_\alpha))\wedge e_\alpha=(\int_{\Z_p}(1+T)^x\mu_{f_\alpha,c,d,\xi}) (t^{1-k}f_\alpha\wedge e_\alpha),$\\
$(2)$ si $0\leq \ell\leq k-2$ et si $\eta$ un caractère de Dirichlet de conducteur $p^n$, tel que $\eta\xi(-1)=(-1)^{k-\ell-1}$, on a 
\[\int_{\Z_p^*}\eta(x) x^{\ell}\mu_{f_\alpha,c,d,\xi}=A_{c,d,\xi}(f_\alpha,\eta^{-1} x^{-\ell})\Gamma(\ell+1)\begin{cases}\frac{G(\eta)}{(\eps^{-1}(p)\alpha)^{n}\eta(-1)}p^{\ell n} \tilde{L}_\xi(f_\alpha^{*},\eta^{-1},\ell+1),&\text{ si } n\geq 1\\
(1-\frac{p^{\ell}\eps(p)}{\alpha})(1-\frac{\eps^{-1}(p)\alpha}{p^{\ell+1}})^{-1}\tilde{L}_\xi(f_\alpha^{*},\ell+1) ,&\text{ si } n=0\end{cases},\]
où le facteur $A_{c,d,\chi}(f_\alpha,\delta)$, où $\delta$ est un caractère sur $\Z_p^*$ à valeurs dans $\bar{\Q}_p^*$, est défini dans le théorème \ref{Single}.
\end{theo}
\begin{proof}On déduit l'existence de la distribution $\mu_{f_\alpha,c,d,\xi}$ vérifiant la condition $(1)$ de la proposition \ref{exist}. En plus, on a $\psi(\mu_{f_\alpha,c,d,\xi})=\eps(p)\alpha^{-1}\mu_{f_\alpha,c,d,\xi}$. Il ne reste que à vérifier la deuxième condition.

Pour $0\leq \ell\leq k-2$, fixons un caract\`ere de Dirichlet $\eta$ de conducteur $p^n$ tel que $\eta\xi(-1)=(-1)^{k-1-\ell}$. Soit $m\geq n$ un entier suffisantment grand et on note $K_m= \Q_p(\z_{p^m})$. On démontre le théorème en comparant les deux expressions de
\[ \left(\tr_{K_m/\Q_p,\eta}p^{-m}\varphi^{-m}(\Exp^{*}(z_{\Kato,c,\xi}(f_\alpha))\right )\wedge e_\alpha\]
dans $L_m[[t]] (t^{1-k}f_\alpha\wedge e_\alpha)$, où $\tr_{K_m/\Q_p,\eta}=\sum_{\gamma\in \Gal(K_m/\Q_p)}\eta(\chi_{\cycl}(\gamma))\gamma$ dont son action sur $L_m[[t]]$  est à travers celle sur $L_m$ (i.e. trivialement sur $t$). 

(I) D'une part, en appliquant la loi de réciprocité explicité de 
Cherbonnier et Colmez  \cite[théorème IV 2.1]{CC3} \`a 
la mesure $z_{\Kato,c,d,\xi}(f_\alpha)$, 
on obtient
\[p^{-m}\varphi^{-m} \Exp^{*}(z_{\Kato,c,d,\xi}(f_\alpha) )
=\sum_{\ell\in\Z}\exp^*_{\mathbf{BK},\ell}(\int_{1+p^m\Z_p}x^{-\ell}z_{\Kato,c,d,\xi}(f_\alpha)))\in L_m[[t]]\otimes \D_{\dR}(V_{f_\alpha}),\]
où $\exp^{*}_{\mathbf{BK},\ell}: \rH^1(\cG_{K_m}, V_{f_\alpha}(-\ell))\ra \D_{\dR}(V_{f_\alpha}(-\ell))=t^{\ell}\D_{\dR}(V_{f_\alpha})=t^{1-k+\ell}\D_{\dR}(V_{f_\alpha}(1-k))$ est l'application exponnentielle de Bloch-Kato pour $V_{f_\alpha}(-\ell)$ . On a alors
\begin{equation*}
\begin{split}
&\left(\tr_{K_m/\Q_p,\eta} p^{-m}\varphi^{-m}(\Exp^{*}(z_{\Kato,c,d,\xi}(f_\alpha) )\right)\wedge e_\alpha \\
=&\sum_{a\in\Z_p^{*}/(1+p^m\Z_p)}\eta(a)\sum\limits_{l\in\Z}\exp^{*}_{\mathbf{BK},\ell}(\int_{a+p^m\Z_p}x^{-\ell}z_{\Kato,c,d,\xi}(f_\alpha))\wedge e_\alpha\\
=&\sum_{\ell\in\Z}\exp^{*}_{\mathbf{BK},\ell}(\int_{\Z_p^{*}}\eta(x)x^{-\ell}z_{\Kato,c,d,\xi}(f_\alpha))\wedge e_\alpha.
\end{split}
\end{equation*}
En particulier, de la caract\'erisation de $z_{\Kato,c,d,\xi}(f_\alpha)$ (cf. th\'eor\`eme \ref{Single}), pour $0\leq \ell\leq k-2$, on a l'égalité, 
\[\exp^{*}_{\mathbf{BK},\ell}(\int_{\Z_p^{*}}\eta(x)x^{-\ell}z_{\Kato,c,d,\xi}(f_\alpha))= A_{c,d,\xi}(f_\alpha,\eta x^{-\ell}) \tilde{L}_\xi(f_\alpha^{*},\eta,\ell+1)f_\alpha t^{1-k+\ell}.\]

Ceci implique que, dans $L_m[[t]] (t^{1-k}f_\alpha\wedge e_\alpha)$, pour $0\leq \ell\leq k-2$, le coefficient en $t^{\ell}(t^{1-k}f_\alpha\wedge e_\alpha)$ de  
\[\left(\tr_{K_m/\Q_p,\eta} p^{-m}\varphi^{-m}(\Exp^{*}(z_{\Kato,c,d,\xi}(f_\alpha) )\right) \wedge e_\alpha\] est
 $A_{c,d,\xi}(f_\alpha,\eta x^{-\ell}) \tilde{L}_\xi(f_\alpha^{*},\eta,\ell+1)$.

(II) D'autre part, en utilisant la définition de $\mu_{f_\alpha,c,d,\xi}$ et la formule 
\[\varphi^{-m}(\int_{\Z_p}(1+T)^x\mu_{f_\alpha,c,d,\xi})=(\int_{\Z_p}\z_{p^m}^xe^{tx/p^m}\mu_{f_\alpha,c,d,\xi}),\] on obtient 
\begin{equation}\label{finally}
\begin{split}
&\left(\tr_{K_m/\Q_p,\eta} p^{-m}\varphi^{-m}(\Exp^{*}(z_{\Kato,c,d,\xi}(f_\alpha) ) \right)\wedge e_\alpha\\
=&\alpha^{m}\tr_{K_m/\Q_p,\eta}  p^{-m}\varphi^{-m}((\int_{\Z_p}(1+T)^x\mu_{f_\alpha,c,d,\xi}) t^{1-k}f_\alpha\wedge e_\alpha)
\\ 
=& \sum_{a\in\Z_p^{*}/1+p^m\Z_p}
\eta(a)p^{-m} (\int_{\Z_p}\z_{p^m}^{ax}e^{\frac{tx}{p^m}}\mu_{f_\alpha,c,d,\xi}) \cdot(\eps^{-1}(p)\alpha)^m(t^{1-k}f_\alpha\wedge e_\alpha).
\end{split}
\end{equation}
On développe $e^{\frac{tx}{p^m}}= \sum_{\ell\geq 0} \frac{1}{\ell!}(\frac{tx}{p^m})^\ell$ et on obtient 
\begin{equation*}
\begin{split}
(\ref{finally})
=\sum_{\ell\geq 0} \frac{t^\ell}{\ell!}\left(\int_{\Z_p}\hat{\eta}(-\frac{x}{p^m}) (\frac{x}{p^m})^\ell\mu_{f_\alpha,c,d,\xi}\right)  (\eps^{-1}(p)\alpha)^m(t^{1-k}f_\alpha\wedge e_\alpha).
\end{split}
\end{equation*}

En comparant les coefficients en $t^\ell$ dans les deux développements ci-dessus, on obtient
\[\frac{(\eps^{-1}(p)\alpha)^m}{\ell!}\int_{\Z_p}\hat{\eta}(-\frac{x}{p^m})(\frac{x}{p^m})^\ell\mu_{f_\alpha,c,d,\xi}= A_{c,d,\xi}(f_\alpha,\eta x^{-\ell}) \tilde{L}_\xi(f_\alpha^{*},\eta,\ell+1).\] 

On déduit, de la formule $(\ref{fourier})$ sur la transformée de Fourier et de la formule $\psi(\mu_{f_\alpha,c,d,\xi})=\eps(p)\alpha^{-1}\mu_{f_\alpha,c,d,\xi}$,  que 
\[\int_{\Z_p}\hat{\eta}(-\frac{x}{p^m})(\frac{x}{p^m})^\ell\mu_{f_\alpha,c,d,\xi}=\begin{cases}\frac{(\eps(p)\alpha^{-1})^{m-n}}{p^{n\ell}G(\eta^{-1})}\int_{\Z_p}\eta^{-1}(-x)x^{\ell}\mu_{f_\alpha,c,d,\xi},&\text{ si } n\geq 1\\ (\eps(p)\alpha^{-1})^{m}(1-\frac{\eps^{-1}(p)\alpha}{p^{\ell+1}})\int_{\Z_p}x^{\ell}\mu_{f_\alpha,c,d,\xi},&\text{ si } n=0.\end{cases}\]




Ceci implique que 
\[\int_{\Z_p^*}\eta^{-1}(x) x^\ell\mu_{f_\alpha,c,d,\xi}=A_{c,d,\xi}(f_\alpha,\eta x^{-\ell})\ell!\cdot \begin{cases} \frac{G(\eta^{-1})\cdot p^{n\ell}}{(\eps^{-1}(p)\alpha)^{n}\eta^{-1}(-1)}\tilde{L}_\xi(f_\alpha^{*},\eta,\ell+1), &\text{ si } n\geq 1\\  (1-\frac{\eps(p)p^\ell}{\alpha})(1-\frac{\eps^{-1}(p)\alpha}{p^{\ell+1}})^{-1}\tilde{L}_\xi(f_\alpha^{*},\ell+1), &\text{ si } n=0\end{cases},\]
où le facteur $(1-\frac{\eps(p)p^\ell}{\alpha})$ dans le cas $n=0$ viens de la formule 
\[\int_{\Z_p^*} x^\ell\mu_{f_\alpha,c,d,\chi}=\int_{\Z_p} x^\ell(1-\varphi\psi)(\mu_{f_\alpha,c,d,\xi}).\]

\end{proof}

Dans la suite, supposons que $\chi$ et $c$ sont bien choisi tels que $\chi(c)$ ne soit pas une racine $(p-1)p^{\infty}$-ième de l'unité. On définit un élément $\mu_{f_\alpha,\chi}$, associée à $f_\alpha$, dans le corps des fractions des distributions sur $\Z_p^*$ à valeurs dans $\bar{\Q}_p$, par la formule: 
\[
\mu_{f_\alpha,\chi}= \frac{\mu_{f_\alpha,c,d,\chi}+(-1)^{k}\delta_{-1}\mu_{f_\alpha,c,d,\chi}}{2A_{f_\alpha,c,d,\chi}}+\frac{\mu_{f_\alpha,c,d,\chi^2}+(-1)^{k-1}\delta_{-1}\mu_{f_\alpha,c,d,\chi^2}}{2A_{f_\alpha,c,d,\chi^2}}, \]
où $\delta_{-1}$ est la masse de Dirac en $-1$, $A_{f_\alpha,c,d,\chi}$ et $A_{f_\alpha,c,d,\chi^2}$ sont les mesures sur $\Z_p^*$ définies dans la remarque \ref{eliminerc}.

\begin{theo}\label{mesureinv}
(1)Si $\eta$ est un caractère continu sur $\Z_p^*$, on a 
\begin{equation*}
\int_{\Z_p^*}\eta\mu_{f_\alpha,\chi}=\begin{cases}\int_{\Z_p^*}\eta\frac{\mu_{f_\alpha,c,d,\chi}}{A_{f_\alpha,c,d,\chi}}, & \text{ si } \eta(-1)=(-1)^{k};\\
\int_{\Z_p^*}\eta\frac{\mu_{f_\alpha,c,d,\chi^2}}{A_{f_\alpha,c,d,\chi^2}},& \text{ si } \eta(-1)=(-1)^{k-1}.\end{cases}
\end{equation*}
(2)
$\mu_{f_\alpha,\chi}$ est une distribution sur $\Z_p^*$, qui est indépendant du choix de $c,d\in \hat{\Z}^*$.
\end{theo}
\begin{proof} Il suffit de montrer que $\mu_{f_\alpha,\chi}$ est une distribution. On rappelle que les mesures $A_{f_\alpha,c,d,\chi}$ et $A_{f_\alpha,c,d,\chi^2}$ ont au plus des zéros en $\eta=x$. Si $k$ est impair (resp. pair), $\eta=x$ est dans le domaine d'intepolation de $\mu_{f_\alpha,c,d,\chi}$ (resp. $\mu_{f_\alpha,c,d,\chi^2}$). On donnera l'argument pour $k$ impair et le cas restant se démontre de la même manière. Le théorème précédent dit que \[\int_{\Z_p^*}x\mu_{f_\alpha,c,d,\chi}
=A_{c,d,\chi}(f_\alpha,x^{-1})(1-\frac{p\eps(p)}{\alpha})(1-\frac{\eps^{-1}(p)\alpha}{p^{2}})^{-1}\tilde{L}_\chi(f_\alpha^{*},2), \]
où $(1-\frac{p\eps(p)}{\alpha})(1-\frac{\eps^{-1}(p)\alpha}{p^{2}})^{-1}\tilde{L}_\chi(f_\alpha^{*},2)$ est un nombre algébrique. Ceci implique que, si le facteur $A_{c,d,\chi}(f_\alpha,x^{-1})=0$, alors $\int_{\Z_p^*}x\mu_{f_\alpha,c,d,\chi}=0$ et donc $\mu_{f_\alpha,\chi}$ est une distribution. 
\end{proof}
\subsection{Fonction L $p$-adique en deux variables}\label{fonct_final}

Soit $S$ une $\Q_p$-algèbre affinoïde. On définit les anneaux $\cE^+_S, \cR^+_S$ par les formules:
\[\cE^+_S=\cE^+\hat{\otimes}_{\Q_p}S \text{ et } \cR^+_S=\cR^+\hat{\otimes}_{\Q_p}S.\]
Si $\Gamma=\Gal(\Q_p(\z_{p^{\infty}})/\Q_p)$, on définit de la même manière
\[\cE^+_S(\Gamma)=\cE^+(\Gamma)\hat{\otimes}S \text{ et } \cR_S^+(\Gamma)= \cR^+(\Gamma)\hat{\otimes}S.\]

Soit $\sX$ un espace rigide analytique réduit et séparé. Soit $\sU$ une famille de représentations $p$-adiques sur $\sX$. Supposons que $\sU$ admet une structure entière, c'est à dire, pour tout $X=\Spm S$ ouvert affinoïde de $\sX$,  la $S$-représentation $\sU_S$ contient un sous $S^+$-module $\sU_S^+$ invariant sous l'action de $\cG_{\Q_p}$, tel que $\sU_S^+\otimes\Q_p=\sU_S$, où $S^+$ la $\Z_p$-algèbre des éléments $s\in S$ de norme $\leq 1$ pour la norme de Banach sur $S$.

On note $\rH_{\Iw}^{i}(\Q_p, \sU)$ le faisceau\footnote{On déduit que ce préfaisceau est un faisceau de la proposition $\ref{grandexp}$.} sur $\sX$ défini comme suit: pour tout $X=\Spm S$ ouvert affinoïde de $\sX$, on pose 
\[\rH_{\Iw}^{i}(\Q_p, \sU)(X)= \rH^i(\cG_{\Q_p}, \fD_0(\Z_p^*,\sU_S)).\]
 Fixons un ouvert affinoïde $X=\Spm S$.  Le module $\rH_{\Iw}^{i}(\Q_p, \sU)(X)$ est un $\cE_S^+(\Gamma)$-module.

Le résultat suivant, dû à Kedlaya-Pottharst-Xiao (cf. \cite[corollary 4.4.11]{KJX}), est une version en famille de l'application exponentielle duale $\Exp^*$ de Fontaine:
\begin{prop}\label{grandexp}Soit $V_S$ une $S$-représentation localement libre de $\cG_{\Q_p}$ muni d'une structure entière. Alors on a un isomorphisme de $\cR_S^+(\Gamma)$-modules:
\[\Exp^*_{S}: \rH^1_{\Iw}(\Q_p, V_S)\hat{\otimes}_{\cE^+(\Gamma)}\cR^+(\Gamma)\cong \D_{\mathrm{BC},\rig}(V_S)^{\psi=1},\]
où $\psi$ est l'inverse à gauche de $\varphi$. Cet isomorphisme est compatible avec le changement de base. En particulier, 
si $x\in X=\Spm S$ , on a 
\[\mathrm{Ev}_x(\Exp^*_S)=\Exp^*: \rH^1_{\Iw}(\Q_p, V_x)\otimes_{\cE^+(\Gamma)}\cR^+(\Gamma)\cong \D_{\rig}(V_x)^{\psi=1}.\]  
\end{prop}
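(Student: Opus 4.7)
The plan is to deduce this family version of Fontaine's theorem from the general Iwasawa-cohomology machinery of Kedlaya-Pottharst-Xiao for $(\varphi,\Gamma)$-modules in families. The starting point is Fontaine's classical isomorphism for a single $p$-adic representation $V$ of $\cG_{\Q_p}$, namely $\rH^1_{\Iw}(\Q_p, V)\cong \D^\dag(V)^{\psi=1}$, proved via a Shapiro-type argument realizing the Iwasawa complex as the two-term complex $[\D^\dag(V)\xrightarrow{\psi-1}\D^\dag(V)]$. The goal is to upgrade this to locally free $S$-representations $V_S$ with integral structure.

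Concretely, I would first establish in families a quasi-isomorphism between the Iwasawa complex $R\Gamma(\cG_{\Q_p}, \fD_0(\Z_p^*, V_S))$ and the Herr-type $\psi$-complex $[\D_{\mathrm{BC},\rig}(V_S)\xrightarrow{\psi-1}\D_{\mathrm{BC},\rig}(V_S)]$, using the existence and functoriality of the Berger-Colmez/Kedlaya-Liu functor $\D_{\mathrm{BC},\rig}$ in families (\cite{BC}, \cite{KL}) together with the family version of the Cherbonnier-Colmez overconvergence theorem. At the level of $H^1$, this produces a natural map from $\rH^1_{\Iw}(\Q_p, V_S)$ into $\D_{\mathrm{BC},\rig}(V_S)^{\psi=1}$; the former is naturally an $\cE^+(\Gamma)\hat{\otimes}S$-module while the latter is an $\cR^+(\Gamma)\hat{\otimes}S$-module. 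The key analytic point is to show that, after extending scalars along the flat inclusion $\cE^+(\Gamma)\hookrightarrow\cR^+(\Gamma)$, this map becomes an isomorphism. Concretely, one verifies that $\psi-1$ induces an isomorphism of the quotient $\D_{\mathrm{BC},\rig}(V_S)/\D^\dag(V_S)$, with inverse controlled by the larger Robba ring of $\Gamma$; this is the content of the \emph{large exponential} construction in families.

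The compatibility with base change $\mathrm{Ev}_x(\Exp^*_S)=\Exp^*$ then follows from the functoriality of every step: the Herr complex is functorial in $V_S$, the functor $\D_{\mathrm{BC},\rig}$ commutes with specialization at closed points (i.e.\ $\D_{\mathrm{BC},\rig}(V_S)\otimes_S(S/m_x)\cong\D_{\rig}(V_x)$ by property (2) of the functor), and the formation of $\psi=1$-invariants commutes with $-\otimes_S(S/m_x)$ provided the relevant Tor vanishes, which is ensured by the finite-slope structure of the $\psi$-complex.

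The main obstacle is the finiteness and coherence of the Iwasawa complex in families — one must verify that $\rH^i_{\Iw}(\Q_p, V_S)$ is a sufficiently well-behaved $\cE^+(\Gamma)\hat{\otimes}S$-module for the tensor product with $\cR^+(\Gamma)$ to produce the correct object, and that the resulting isomorphism is compatible with $\psi$-action, $\Gamma$-action and the ring $\cR_S^+(\Gamma)$-module structure simultaneously. This is exactly where the finite-slope $U$-operator techniques and the slope-decomposability results of \cite{KJX} enter, providing the coherence needed so that flatness of $\cR^+(\Gamma)$ over $\cE^+(\Gamma)$ translates into exactness of the scalar extension on the cohomological side.
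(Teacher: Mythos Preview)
The paper does not prove this proposition at all: it simply attributes the result to Kedlaya--Pottharst--Xiao and cites \cite[corollary 4.4.11]{KJX} as a black box, with no argument given beyond the sentence introducing the statement. So there is no ``paper's own proof'' to compare your proposal against.

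Your sketch is a reasonable outline of the strategy in \cite{KJX}: realize Iwasawa cohomology via the $\psi$-complex, use the family $(\varphi,\Gamma)$-module functor $\D_{\mathrm{BC},\rig}$, and pass from $\cE^+(\Gamma)$ to $\cR^+(\Gamma)$. Two small cautions if you intend to flesh this out. First, the passage ``$\psi-1$ induces an isomorphism of $\D_{\mathrm{BC},\rig}(V_S)/\D^\dag(V_S)$'' is delicate in families and is really the heart of the matter; one cannot just invoke flatness of $\cR^+(\Gamma)$ over $\cE^+(\Gamma)$ abstractly, since the relevant modules are not finite over these rings. Second, the base-change step ``formation of $\psi=1$-invariants commutes with $-\otimes_S(S/m_x)$ provided the relevant Tor vanishes'' is not automatic and is precisely what the perfect-complex and coadmissibility statements in \cite{KJX} are designed to supply. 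Your proposal names the right ingredients but does not yet explain why these two steps go through; for the purposes of this paper, citing \cite{KJX} directly is both sufficient and what the author does.
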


Le faisceau $\sV$ (cf. \S 4.2) n'est pas une vraie famille de représentations galoisiennes sur $\fC^0$ et on utilise la technique "la transformation stricte" de Bellaïche-Chenevier \cite{BeCh}, rappelée ci-dessous, pour le modifier en une vraie famille de représentations de $\cG_{\Q}$ sur $\tilde{\fC}^0$ la normalisation de $\fC^0$.

\begin{defn}\label{strict}Un morphisme $\pi: \sX'\ra \sX $ d'espaces rigides réduits est birationnel s'il existe un faisceau d'idéal cohérent $H\subset \cO_\sX$, tel que,
le complément $U$ du sous-espace fermé $V(H)$ défini par $H$ est Zariski-dense dans $\sX$, le morphisme $\pi$ induit un isomorphisme $\pi^{-1}(U)\ra U$ et l'image inverse $\pi^{-1}(U)$ est Zariski-dense dans $\sX'$.
\end{defn}
Soit $\pi:\sX'\ra \sX$ un morphisme propre et rationnel d'espaces rigides réduits. Fixons un faisceau d'idéal cohérent $H$ dans la définition ci-dessus. Si $\cM$ est un $\cO_\sX$-faisceau cohérent, on définit un $\cO_{\sX'}$-faisceau cohérent $\cM'$, appelé la transformée stricte $\cM'$ de $\cM$, en quotientant le $\cO_{\sX'}$-faisceau cohérent $\pi^*\cM$ par ses $H^{'\infty}$-torsions, où $H'$ est le faisceau d'idéal cohérent définissant le sous-ensemble fermé $\pi^{-1}(V(H))\subset\sX'$.  En général, la définition de la transformée stricte de $\cM$ dépend du choix de $H$. Si $\cM$ est muni d'une action continue $\cO_\sX$-linéaire d'un groupe topologie $G$, ceci induit une action continue $\cO_{\sX'}$-linéaire sur $\cM'$ et l'application naturelle $\pi^*\cM\ra \cM'$ est $G$-équivariant. En particulier, si $\cM$ est sans torsion, alors $\cM'$ est aussi sans torsion et $\cM'$ ne dépend pas du choix de $H$.

\begin{lemma} \cite[lemma 3.4.2]{BeCh} Soit $\cM$ un faisceau cohérent sans torsion sur un espace rigide réduit $\sX$. Il existe un morphisme propre et birationnel $\pi: \sX'\ra\sX$ avec $\sX'$ reduit, tel que, la transformée stricte $\cM'$ de $\cM$ par $\pi$ est un faisceau cohérent localement libre. En particulier, si $\sX$ est une courbe, on peut prendre $\pi$ la normalisation de la courbe. 
\end{lemma}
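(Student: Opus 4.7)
La stratégie se décompose en trois étapes. D'abord, on se ramène au cas où $\sX=\Spm A$ est un affinoïde réduit et où $\cM$ provient d'un $A$-module de type fini sans torsion $M$: en effet, la propriété d'être localement libre se teste localement, et l'on peut recoller les éclatements locaux en un éclatement global grâce à la cohérence.

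Ensuite, pour aplatir $\cM$ par un éclatement admissible, on considère les idéaux de Fitting de $M$: soit $r$ le rang générique (constant sur chaque composante connexe, puisque $\sX$ est réduit) et $Z=V(\mathrm{Fitt}_r(M))\subset\sX$ le sous-espace analytique fermé, propre dans $\sX$, où la dimension fibre de $\cM$ saute au-dessus de $r$. On éclate $\sX$ le long de $\mathrm{Fitt}_r(M)$ (ou, au besoin, d'un produit convenable d'idéaux de Fitting supérieurs pour aplatir les strates imbriquées), obtenant un morphisme propre et birationnel $\pi:\sX'\to\sX$. D'après la version rigide-analytique de l'aplatissement de Raynaud-Gruson (due à Bosch-L\"utkebohmert), la transformée stricte $\cM'$ de $\cM$ devient plate sur $\sX'$ après un tel éclatement. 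Comme la transformée stricte d'un faisceau sans torsion reste sans torsion, et qu'un faisceau cohérent plat sur un espace rigide réduit est localement libre, ceci établit la première assertion.

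Pour le cas particulier où $\sX$ est une courbe, on peut considérablement simplifier l'argument: la normalisation $\pi:\tilde{\sX}\to\sX$ est propre et birationnelle, et $\tilde{\sX}$ est régulière (donc lisse) de dimension $1$. Sur un tel espace, tout faisceau cohérent sans torsion est localement libre, car chaque anneau local est un anneau de valuation discrète et un module de type fini sans torsion sur un tel anneau est automatiquement libre. La transformée stricte $\cM'$ restant sans torsion, elle est donc localement libre sur $\tilde{\sX}$, ce qui donne la seconde assertion.

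L'obstacle principal en toute généralité est de disposer d'une version utilisable de l'aplatissement de Raynaud-Gruson en géométrie rigide, qui demande un soin technique non négligeable; heureusement, pour l'application visée dans cet article (où $\fC^0$ est une courbe), on se contente du cas facile où $\pi$ est la normalisation, ce qui élimine cette difficulté.
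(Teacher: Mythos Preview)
The paper does not supply its own proof of this lemma: it is stated with the citation \cite[lemma 3.4.2]{BeCh} and used immediately afterward without further argument. So there is nothing in the paper to compare your proof against directly; you are supplying a proof where the author is content to quote Bella\"iche--Chenevier.

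That said, your sketch is essentially the standard argument and is sound in outline. One point deserves tightening: your first step, ``reduce to an affinoid and then glue the local blow-ups'', is not quite how one proceeds, because blow-ups constructed locally along different ideals need not glue. The correct formulation is global from the start: the Fitting ideals $\mathrm{Fitt}_i(\cM)$ are coherent ideal sheaves on all of $\sX$ (their formation commutes with localization), so blowing up $\sX$ along the relevant Fitting ideal is a single global construction, and the rigid-analytic Raynaud--Gruson flattening applies to this global blow-up. With that adjustment, the chain ``strict transform of torsion-free is torsion-free'' $\Rightarrow$ ``flat coherent over a (locally) noetherian base is locally free'' is correct, since affinoid algebras are noetherian.

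Your treatment of the curve case is entirely correct and is exactly what the paper actually uses: the normalization $\tilde{\fC}^0\ra\fC^0$ is finite (hence proper) and birational, the normalized curve has regular local rings (discrete valuation rings), and a finitely generated torsion-free module over a DVR is free. As you observe, this bypasses the flattening machinery altogether, and it is the only case the paper needs.
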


Soit $\sV'$ la transformée stricte de $\sV$ par le morphisme propre et birationnel $\pi: \tilde{\fC}^0\ra \fC^0$. En particulier, $\sV'$ est une famille de représentations de $\cG_{\Q_p}$.  On note $Z$ le sous-ensemble de $\fC^0$ des formes cuspidales raffinées régulières non-critiques, qui est Zariski-dense dans $\tilde{\fC}^0$. On note $\tilde{\alpha}\in \cO(\tilde{\fC}^0)$ l'image inverse de $\alpha\in \cO(\fC^0)$ sous l'application de normalisation. 
\begin{prop}\label{faible} La donnée $(\sV', \tilde{\alpha}, Z,\kappa_1=\kappa,\kappa_2=0)$ est une famille faiblement raffinée de représentations $p$-adiques de dimension $2$ sur $\tilde{\fC}^0$.
\end{prop}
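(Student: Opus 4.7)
Il s'agit de vérifier les cinq axiomes $(a)$--$(e)$ de la définition d'une famille faiblement raffinée, en utilisant la densité Zariski du sous-ensemble $Z$ des points classiques non-critiques pour étendre aux cas globaux des propriétés connues ponctuellement. D'abord, $\sV'$ est un $\cO_{\tilde{\fC}^0}$-module localement libre de rang $2$: puisque $\tilde{\fC}^0$ est une courbe rigide normale, donc lisse, tout faisceau cohérent sans torsion y est localement libre, et la transformée stricte $\sV'$ est sans torsion par définition; son rang générique est $2$ grâce aux isomorphismes $\mathrm{Ev}_{f_\alpha}(\sV') \cong V_{f_\alpha}$ aux points de $Z$. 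L'action continue de $\cG_{\Q_p}$ est héritée de celle sur $\sV$ via le foncteur de transformée stricte.

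Pour les conditions $(a)$ et $(b)$: aux points classiques $f_\alpha \in Z$, le théorème de Deligne rappelé au \S 2.1 montre que $V_{f_\alpha}$ est cristalline de poids de Hodge-Tate $\{0, k-1\}$, où $k$ désigne le poids de $f_\alpha$. Comme $Z$ est Zariski-dense dans $\tilde{\fC}^0$ et que l'opérateur de Sen agissant sur le $(\varphi,\Gamma)$-module en famille $\D_{\mathrm{BC},\rig}(\sV')$ se spécialise compatiblement aux points classiques (cf. la compatibilité de $\D_{\mathrm{BC},\rig}$ au changement de base établie au \S 2.2), son polynôme caractéristique doit être $(X-\kappa)X$ sur tout $\tilde{\fC}^0$, ce qui donne $(a)$. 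La partie cristalline de $(b)$ résulte également des propriétés connues aux points classiques, et la condition sur le plus grand poids est automatique puisque $\kappa(f_\alpha) = k-1 \geq 1 > 0$. Pour $(c)$, la proposition \ref{raff_BC} associe au raffinement $\alpha$ un sous-$\varphi$-module propre de rang $1$ de $\D_{\cris}(V_f)$ de valeur propre $\alpha$; après torsion par $\chi_{\cycl}^{k-1}$ pour passer à $V_{f_\alpha} = V_f(k-1)$, la valeur propre devient $p^{1-k}\alpha = p^{-\kappa(f_\alpha)}\tilde{\alpha}(f_\alpha)$, comme voulu.

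Pour la condition $(d)$, on exploite le fait que le morphisme de poids $\kappa: \tilde{\fC}^0 \to \sW$ est fini et plat d'après la construction d'Ash-Stevens-Bellaïche rappelée au \S 4, et que l'ensemble $\{k \in \N : k-1 > C\}$ est très Zariski-dense dans l'espace des poids $\sW$; combiné au résultat de classicité \ref{classicité}, ceci entraîne que $Z_C$ s'accumule en tout point de $Z$. Enfin pour $(e)$, on pose $\eta = \kappa^{\mathrm{univ}} \circ \kappa$, où $\kappa^{\mathrm{univ}}$ est le caractère universel sur $\sW$; sa dérivée en $1$ est tautologiquement $\kappa$ et son évaluation en $f_\alpha \in Z$ est $x \mapsto x^{k-1} = x^{\kappa(f_\alpha)}$.

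Le principal obstacle technique sera la condition $(a)$: passer d'une information aux points classiques (poids de Hodge-Tate $\{0, k-1\}$) à une propriété globale du faisceau de Sen sur $\tilde{\fC}^0$ tout entier. On s'appuiera sur la densité Zariski de $Z$ dans $\tilde{\fC}^0$ et sur le principe du prolongement analytique appliqué aux racines du polynôme caractéristique de l'opérateur de Sen agissant sur $\D_{\mathrm{BC},\rig}(\sV')$; les autres axiomes suivent de manière plus directe de la construction.
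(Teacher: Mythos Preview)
Your proposal is correct and considerably more thorough than the paper's own proof, which is extremely terse: the paper simply asserts that all axioms $(a)$--$(e)$ hold ``par construction'' and devotes its three lines to checking that $\sV'$ has rank $2$. The paper's rank argument is slightly different from yours: it observes that around any point $x \in \tilde{\fC}^0$ the weight map is étale away from $x$, then invokes local constancy of the rank of a projective module of finite type. Your argument---normal rigid curve $\Rightarrow$ smooth, hence torsion-free coherent $\Rightarrow$ locally free, with the generic rank read off at classical points---is at least as clean and arguably more transparent, since it explains \emph{why} $\sV'$ is locally free rather than implicitly invoking the Bellaïche--Chenevier lemma.

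Your explicit verification of $(a)$--$(e)$ is a genuine addition; the paper leaves all of this implicit. The argument for $(a)$ via the Sen polynomial and Zariski density is standard and correct, as are your treatments of $(b)$, $(c)$, $(d)$, and $(e)$. One small caution: there is a normalization ambiguity in the paper regarding $\kappa$ (a later footnote sets $\kappa(f)=k-2$, whereas the Hodge--Tate weights of $V_{f_\alpha}$ are $\{0,k-1\}$), so the precise match in $(c)$ and $(e)$ depends on resolving this; your computation $p^{1-k}\alpha = p^{-\kappa(f_\alpha)}\tilde{\alpha}(f_\alpha)$ implicitly takes $\kappa(f_\alpha)=k-1$. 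This is an inconsistency in the paper's conventions rather than in your reasoning, but you may wish to flag it when writing up.
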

\begin{proof} Par construction, il suffit de montre que $\sV'$ est une famille de représentation de dimension $2$. Pour tout $x\in \tilde{\fC}^0$, il exist un voisinage $U$ de $x$ tel que l'application de poids est étale sauf en $x$. Comme le rang de la localisation d'un module projective de type fini est localement constant, cela permet de conclure.
\end{proof}

D'après le théorème \ref{triangulation},   $\D_{\mathrm{BC},\rig}(\sV')$ admet un sous-faisceau cohérent localement libre de rang $1$
\[\D_{\tilde{\alpha}}:=\cR\hat{\otimes}\D_{\mathrm{BC},\rig}(\sV')^{\varphi=\tilde{\alpha}, \Gamma=\kappa\circ\chi_{\cycl}}.\]

Pour tout $x\in Z$, il existe un ouvert affinoïde $x\in X$ dans une composante irréductible de $\tilde{\fC}^0$, tel que, les faisceaux cohérents
$\D_{\tilde{\alpha}}$ et $\D_{\mathrm{BC},\rig}(\sV')$ soient libres sur $X$. Dans la suite, par abus de notation, on note $\D_{\mathrm{BC},\rig}(\sV')$ et $\D_{\tilde{\alpha}}$ ses restrictions à $X$.

Soient $e_{\tilde{\alpha}}$ et $e$ deux bases respectivement de $\D_{\tilde{\alpha}}$ et $\wedge^2\D_{\mathrm{BC},\rig}(\sV')$ 
comme $\cR_{X}$-modules.
On a 
\[\varphi(e_{\tilde{\alpha}})=\tilde{\alpha} e_{\tilde{\alpha}}, \gamma (e_{\tilde{\alpha}})= \kappa\circ\chi_{\cycl}(\gamma) ; \varphi(e)=\eps(p)e ,\gamma (e)=\kappa\circ\chi_{\cycl}(\gamma).\]
\begin{prop}\label{mach}Soit $|\tilde{\alpha}|_p\leq 1$ et $\tilde{\alpha}\neq 1$. Soit $z$ une section globale de $\D_{\mathrm{BC},\rig}(\sV')^{\psi=1}$. Alors il existe une section globale $w_F$ de $\cR^+_{X}$ telle que 
$z\wedge e_{\tilde{\alpha}}= w_{\tilde{\alpha}} e$, qui est la transformée d'Amice d'une famille de distributions sur $X$.
\end{prop}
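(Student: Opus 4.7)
Le plan est de reproduire en famille l'argument de la proposition \ref{exist}, en exploitant la triangulation globale fournie par la proposition \ref{triangulation} (applicable grâce à la proposition \ref{faible}) et une version en famille du lemme \cite[proposition I.11]{PC2} de Colmez.

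La première étape consiste à établir l'équation fonctionnelle satisfaite par $w_{\tilde{\alpha}}$. À partir de la suite exacte courte
\[0 \to \D_{\tilde{\alpha}} \to \D_{\mathrm{BC},\rig}(\sV') \to \D_{\mathrm{BC},\rig}(\sV')/\D_{\tilde{\alpha}} \to 0\]
et du fait que $\wedge^2 \D_{\mathrm{BC},\rig}(\sV') = \cR_X e$ avec $\varphi(e) = \eps(p) e$, l'action de $\varphi$ sur le quotient de rang $1$ est la multiplication par $\eps(p) \tilde{\alpha}^{-1}$. Le scalaire $w_{\tilde{\alpha}}$ s'identifie, au signe près et à un facteur de normalisation près, au coefficient, dans une trivialisation locale de ce quotient, de la projection $\bar{z}$ de $z$. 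L'hypothèse $\psi(z) = z$ se descend en $\psi(\bar{z}) = \bar{z}$, et en utilisant la formule $\psi \circ \varphi = \mathrm{id}$ sur le quotient, on obtient
\[\psi(w_{\tilde{\alpha}}) = \eps(p) \tilde{\alpha}^{-1} w_{\tilde{\alpha}}.\]

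L'étape centrale, et principale difficulté prévisible, est de montrer que toute solution $w \in \cR_X$ de cette équation appartient à $\cR^+_X$; c'est une généralisation en famille du lemme de Colmez. Les hypothèses jouent ici leur rôle: la condition $|\tilde{\alpha}|_p \leq 1$ garantit $|\eps(p) \tilde{\alpha}^{-1}|_p \geq 1$ uniformément sur $X$, tandis que $\tilde{\alpha} \neq 1$ écarte la seule valeur exceptionnelle (correspondant à un cas dégénéré de la cohomologie $\psi$ pour les $(\varphi,\Gamma)$-modules de rang $1$). L'idée, inspirée de la preuve ponctuelle de Colmez, est que l'opérateur $\psi$ contracte strictement la partie polaire (les termes en $T^{-n}$ pour $n \geq 1$) d'un élément de $\cR$; dans une équation $\psi(w) = \gamma w$ avec $|\gamma|_p \geq 1$, l'itération $\psi^m(w) = \gamma^m w$ force alors la partie polaire de $w$ à s'annuler. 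Le principal obstacle technique sera d'exécuter ce contrôle de façon uniforme sur $X$, en tirant parti de la structure de Banach de $\cO(X)$ et, éventuellement, en s'appuyant sur l'existence de suffisamment de points $x \in X$ où $|\tilde{\alpha}(x)|_p < 1$ pour pouvoir appliquer la proposition \ref{exist} par spécialisation, puis en étendant l'intégralité à tout $X$ par un argument d'analyticité rigide.

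Enfin, une fois $w_{\tilde{\alpha}} \in \cR^+_X = \cR^+ \hat{\otimes} \cO(X)$ établi, la conclusion est formelle: les éléments de $\cR^+_X$ sont exactement les séries $\sum_{n \geq 0} a_n T^n$ à coefficients $a_n \in \cO(X)$ convergeant sur le disque unité ouvert, et la transformée d'Amice $\mu \mapsto \int_{\Z_p}(1+T)^x \mu$ réalise une bijection entre ces séries et les distributions sur $\Z_p$ à valeurs dans $\cO(X)$. La section $w_{\tilde{\alpha}}$ fournit donc une famille de distributions sur $X$, comme annoncé.
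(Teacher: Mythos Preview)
Your proposal is correct and follows essentially the same approach as the paper: derive the functional equation $\psi(w_{\tilde{\alpha}}) = \eps(p)\tilde{\alpha}^{-1} w_{\tilde{\alpha}}$, then invoke a family version of Colmez's lemma \cite[proposition I.11]{PC2} to conclude that $w_{\tilde{\alpha}} \in \cR^+_X$. The only minor difference is presentational: the paper computes $\psi(z \wedge e_{\tilde{\alpha}})$ directly via $\psi(z)=z$ and $e_{\tilde{\alpha}} = \tilde{\alpha}^{-1}\varphi(e_{\tilde{\alpha}})$, whereas you pass through the quotient $\D_{\mathrm{BC},\rig}(\sV')/\D_{\tilde{\alpha}}$; and the paper simply asserts the family Colmez lemma without further comment, while you sketch its proof.
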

\begin{proof}On a $\psi(z\wedge e_{\tilde{\alpha}})=z\wedge \psi(e_{\tilde{\alpha}})=\tilde{\alpha}^{-1}z\wedge e_{\tilde{\alpha}}=\tilde{\alpha}^{-1}w_{\tilde{\alpha}} e$. D'autre part, on a $\psi(w_{\tilde{\alpha}} e)= \psi(w_{\tilde{\alpha}}) \eps^{-1}(p)e$. On en déduit que 
$\psi(w_{\tilde{\alpha}})= \eps(p)\tilde{\alpha}^{-1} w_{\tilde{\alpha}}$. L'existence de la famille de distributions vient de ce qu'une solution dans $\cR_{\tilde{\fC}^0}$ d'une équation du type $\psi(x)-\alpha x\in\cR_{X}^+ $, où $\alpha\in \cO(X)$ vérifiant $|\alpha|_p\geq 1$ et $\alpha\neq 1$, appartient à $\cR_{X}^+$, et donc est la transformée d'Amice d'une famille de distributions.
\end{proof}
En particulier, cette proposition s'applique à la famille de systèmes d'Euler de Kato $z_{\Kato,c,d,\xi}(X)$, la restriction à $X$ de l'image de $z_{\Kato,c,d,\xi}(\fC^0)$ sous la transformée stricte par rapport à l'application de normalisation. On note $\mu_{X,c,d,\xi}$ la famille de distributions telle que 
\[\Exp^*_{X}(z_{\Kato,c,d,\xi}(X))\wedge e_{\tilde{\alpha}}= \int_{\Z_p}(1+T)^z\mu_{X,c,d,\xi} e.\]

\begin{prop}\label{principal_vrai}Si $f\in Z\cap X$, il existe une constante non-nulle $C(f)$ dépendant de $f$, telle que, on ait 
\[\mathrm{Ev}_f(\mu_{X,c,d,\xi})= C(f)\mu_{f,c,d,\xi}, \text{ où } \mathrm{Ev}_f \text{ est l'application d'évaluation en } f.\]
\end{prop}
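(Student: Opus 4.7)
La stratégie est de spécialiser en $f$ l'équation définissant $\mu_{X,c,d,\xi}$ (c'est-à-dire $\Exp^*_{X}(z_{\Kato,c,d,\xi}(X))\wedge e_{\tilde{\alpha}}= (\int_{\Z_p}(1+T)^z\mu_{X,c,d,\xi})\, e$) et de la comparer à l'équation analogue qui définit $\mu_{f,c,d,\xi}$, à savoir $\Exp^{*}(z_{\Kato,c,d,\xi}(f_\alpha))\wedge e_\alpha=(\int_{\Z_p}(1+T)^x\mu_{f_\alpha,c,d,\xi})\, (t^{1-k}f_\alpha\wedge e_\alpha)$. La constante $C(f)$ apparaîtra comme rapport des constantes mesurant les spécialisations en $f$ des sections $e_{\tilde{\alpha}}$ et $e$.

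Plus précisément, je procéderais en quatre étapes. D'abord, la compatibilité de la construction globale avec la spécialisation aux points classiques non-critiques (théorème de la fin du \S 4.2) donne $\mathrm{Ev}_f(z_{\Kato,c,d,\xi}(X))=z_{\Kato,c,d,\xi}(f_\alpha)$. Ensuite, la proposition \ref{grandexp} de Kedlaya-Pottharst-Xiao assure que $\Exp^*_X$ commute au changement de base, de sorte que $\mathrm{Ev}_f\bigl(\Exp^*_X(z_{\Kato,c,d,\xi}(X))\bigr)=\Exp^*(z_{\Kato,c,d,\xi}(f_\alpha))$ dans $\D_{\rig}(V_f)^{\psi=1}$. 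Troisièmement, j'utilise la compatibilité de la triangulation en famille (proposition \ref{triangulation}) avec la spécialisation: par construction, le faisceau $\D_{\tilde{\alpha}}$ spécialise sur le sous-$(\varphi,\Gamma)$-module de rang $1$ de $\D_{\rig}(V_f)$ correspondant au raffinement $f_\alpha$ via la proposition \ref{raff_BC}, i.e.\ $D_\alpha$. Comme $\D_{\tilde{\alpha}}$ et $D_\alpha$ sont tous deux libres de rang $1$ (sur $\cR_X$ et $\cR$ respectivement), il existe une constante $c_1(f)\in L(f)^*$ telle que $\mathrm{Ev}_f(e_{\tilde{\alpha}})=c_1(f)\,e_\alpha$. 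De même, $\mathrm{Ev}_f(\wedge^2\D_{\mathrm{BC},\rig}(\sV'))=\wedge^2\D_{\rig}(V_f)$ (qui est engendré par $t^{1-k}f_\alpha\wedge e_\alpha$ d'après le lemme avant le théorème final du \S 5.3), donc il existe $c_2(f)\in L(f)^*$ avec $\mathrm{Ev}_f(e)=c_2(f)\,(t^{1-k}f_\alpha\wedge e_\alpha)$.

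En appliquant $\mathrm{Ev}_f$ aux deux membres de l'équation définissant $\mu_{X,c,d,\xi}$ et en utilisant les trois compatibilités précédentes, on obtient
\[
c_1(f)\,\Exp^{*}(z_{\Kato,c,d,\xi}(f_\alpha))\wedge e_\alpha
=\Bigl(\int_{\Z_p}(1+T)^z\,\mathrm{Ev}_f(\mu_{X,c,d,\xi})\Bigr)c_2(f)\,(t^{1-k}f_\alpha\wedge e_\alpha).
\]
La définition de $\mu_{f_\alpha,c,d,\xi}$ donne alors $c_1(f)\mu_{f_\alpha,c,d,\xi}=c_2(f)\mathrm{Ev}_f(\mu_{X,c,d,\xi})$, d'où $C(f)=c_1(f)/c_2(f)\in\bar{\Q}_p^*$.

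Le principal obstacle technique consiste à justifier rigoureusement la compatibilité entre la triangulation en famille et la triangulation pontuelle, c'est-à-dire à vérifier que l'évaluation du sous-faisceau $\D_{\tilde{\alpha}}$ en $f$ s'identifie bien au sous-$(\varphi,\Gamma)$-module $D_\alpha$, et surtout à s'assurer que les constantes $c_1(f)$ et $c_2(f)$ sont non nulles. La non-nullité de $c_2(f)$ découle de la liberté locale de $\wedge^2\D_{\mathrm{BC},\rig}(\sV')$ sur $X$ (qui résulte du fait que $\sV'$ est une vraie famille de représentations de rang $2$ par la proposition \ref{faible}); celle de $c_1(f)$ découle du fait que $f$ est non-critique (donc $v_p(\alpha)<k-1$), ce qui entraîne que la spécialisation en $f$ du sous-$(\varphi,\Gamma)$-module caractérisé par $\varphi=\tilde{\alpha}$ et $\Gamma=\kappa\circ\chi_{\cycl}$ fournit bien un sous-objet non trivial de rang $1$ de $\D_{\rig}(V_f)$, qui coïncide avec $D_\alpha$ par unicité de la triangulation associée au raffinement $\alpha$.
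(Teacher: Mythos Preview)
Your proof is correct and follows essentially the same approach as the paper: specialize the defining equation of $\mu_{X,c,d,\xi}$ at $f$, use the compatibilities of $z_{\Kato,c,d,\xi}$, of $\Exp^*_X$, and of the triangulation with evaluation, and compare bases of the rank-$1$ modules involved. The paper's own proof is terser --- it only records the constant arising from $\mathrm{Ev}_{f_\alpha}(e)=C(f_\alpha)\,t^{1-k}f_\alpha\wedge e_\alpha$ and leaves implicit the analogous statement for $e_{\tilde{\alpha}}$ --- whereas you carefully separate the two constants $c_1(f)$ and $c_2(f)$ and explain why each is nonzero; your version is thus a more explicit elaboration of the same argument.
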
   
\begin{proof}Si $f_\alpha\in  Z\cap X$ est une forme de poids $k$, on a $t^{1-k}f_\alpha\wedge e_\alpha$ est une base du $(\varphi,\Gamma)$-module $\wedge^2\D_\rig(V_{f_\alpha})$. On a donc $\mathrm{Ev}_{f_\alpha}(e)= C(f_\alpha) t^{1-k} f_\alpha\wedge e_\alpha$, où $C(f_\alpha)\in \bar{\Q}_p^*$ est une constante dépendante de la forme $f_\alpha$.
On déduit de la définition de $\mu_{X,c,d,\xi}$ et $\mu_{f_\alpha,c,d,\xi}$ que 
\[\mathrm{Ev}_{f_\alpha}(\mu_{X,c,d,\xi})= C(f_\alpha)\mu_{f_\alpha,c,d,\xi}.\]
\end{proof}
On note $\kappa:X\ra \sW $ l'application de poids. 
Si $x\in X$,  on note\footnote{On rappelle que l'on a normalisé l'application de poids de telle sorte que $\kappa(f)=k-2$, si $f$ est une forme classique de poids $k$.} $\kappa_x$ le poids de $x$. 
On définit un élément $A_{X,c,d,\xi}$ dans $\fD_0(\Z_p^*, \cO(X))$ par la formule: si $\eta$ est un caractère continu sur $\Z_p^*$ à valeurs dans $\bar{\Q}_p$, on a
 \begin{equation}\label{CD}
\int_{\Z_p^*}\eta A_{X,c,d,\xi}=2G(\xi)\cdot \kappa^{\univ}(N)\delta(N)\cdot (c_p^2-\kappa^{\univ}(c_p^{-1}) c_p\eta(c_p^{-1})\xi(c^{-1}))(d_p^2-d_p\eta(d_p^{-1})). \end{equation}
En plus, si $f_\alpha\in X$ est une forme raffinée comme dans le chapitre précédent, on a 
\[\mathrm{Ev}_{f_\alpha}(A_{X,c,d,\xi})=\mathrm{Ev}_{\kappa_{f_\alpha}}(A_{X,c,d,\xi})= A_{f_\alpha,c,d,\xi}.\] 
Rappelons que $\chi$ et $c$ sont bien choisi tels que $\chi(c)$ ne soit pas une racine $(p-1)p^{\infty}$-ième de l'unité. La mesure $A_{X,c,d,\xi}$ a au plus des zéros sur  $X\times\{x^{-1}\}$. On définit un élément $\mu_{X, \chi}$ dans le corps des fractions de $\fD(\Z_p^*,\cO(X))$ par la formule:
 \[
\mu_{X,\chi}= \frac{\mu_{X,c,d,\chi}+\kappa^{\univ}(-1)\delta_{-1}\mu_{X,c,d,\chi}}{2A_{X,c,d,\chi}}+\frac{\mu_{X,c,d,\chi^2}-\kappa^{\univ}(-1)\delta_{-1}\mu_{X,c,d,\chi^2}}{2A_{X,c,d,\chi^2}}, \]
\begin{theo}(1) Si $f\in Z\cap X$, alors on a $\mathrm{Ev}_f(\mu_{X,\chi})=C(f) \mu_{f,\chi}$, 
où $C(f)$ est une constante non nulle dans $\bar{\Q}_p^*$ dépendant de $f$.\\
(2) $\mu_{X, \chi}$ est une distribution sur $\Z_p^*$ à valeurs dans $\cO(X)$, indépendant du choix de $c,d$. 
\end{theo}
\begin{proof}La propriété d'interpolation est une conséquence du théorème précédent. Il ne reste que à justifier que $\mu_{X,\chi}$ est une distribution. Rappelons que $A_{X,c,d,\chi}$ et $A_{X,c,d,\chi^2}$ n'ont pas d'autre zéros que $X\times\{x\}$. Si $f\in Z\cap X$ , on a 
\[\mathrm{Ev}_f(\int_{\Z_p^*}x\mu_{X,\chi})=\begin{cases}\mathrm{Ev}_f(\frac{\int_{\Z^*_p}x\mu_{X,c,d,\chi}}{\int_{\Z_p^*}xA_{X,c,d,\chi}}), &\text{ si le poids de $f$ est impair};\\
\mathrm{Ev}_f(\frac{\int_{\Z^*_p}x\mu_{X,c,d,\chi^2}}{\int_{\Z_p^*}xA_{X,c,d,\chi^2}}), &\text{ si le poids de $f$ est pair}.\end{cases} \]

La propriété d'interpolation de $\mu_{X,c,d,\chi}$ et $\mu_{X,c,d,\chi^2}$, le théorème \ref{mesureinv} et la densité de $Z\cap X$ dans $X$ nous permet de conclure que l'intégration $\int_{\Z_p^*}x\mu_{X,\chi}$ n'a pas de pôle sur $X$.
\end{proof}

\begin{remark}
 (1) Soit $U\subset\tilde{\fC}^0$ un voisinage d'un point classique non-critique $f_\alpha$ où la fonction L $p$-adique en deux variables de Panchishkin\footnote{On ignore la différence entre les choix du caractère auxiliaire: dans \cite{AP}, il utilise un caractère auxiliaire modulo $p$, et chez nous, on utilise un caractère auxiliaire modulo $N$.} $L_{\mathrm{Pan},\chi}(f,\sigma)$ est définie. Le prolongement analytique nous permet de  déduire qu'il existe un élément $F$ dans le corps des fractions de $\cO(U)$ sans zéro ni pôle sur $f\in Z$, telles que,   
\[
L_{\mathrm{Pan},\chi}(f, \sigma)=F(f)\cdot L_{p,\chi}(f,\sigma)\cdot\mathrm{Eul}_N(f,\sigma) \] 
où $\mathrm{Eul}_N(f,\sigma)$ est un produit de "facteur d'Euler" explicit (cf. \cite{AP} la formule (0.6)) en les $l\mid N$, qui appartient à $\cO(\tilde{\fC}^0)\hat{\otimes} \Lambda$ de manière évidente. \\

(2) Le prolongement analytique nous permet aussi de montrer qu'il existe une fonction $F$ dans le corps des fractions de $\cO(\tilde{\fC}^0)$, telle que 
$L_{p,\chi}(\tilde{\fC}^0,\delta)= F L_{\mathrm{Bel}}(\tilde{\fC}^0,\delta)$, où $L_{\mathrm{Bel}}(\tilde{\fC}^0,\delta)$ est la fonction L $p$-adique en deux variables de Bellaïche (cf. \cite{Be}). Si $x\in \fC^0$ est un point critique au sens de Bellaïche \cite{Be}, on définit la fonction L $p$-adique critque $L_{p,\chi}(x,\delta)$ en $x$ en évaluant $L_{p,\chi}(\tilde{\fC}^0, \delta)$ en $x$.  Si $x\in\fC^0$ est le raffinement critique d'une forme de type CM de poids $k$ et si $\phi:\Z_p^*\ra \Q_p^*$ est un caractère d'ordre fini, alors on a 
\[L_{p,\chi}(x,\phi \cdot x^j)=0, \text{ si } 0\leq j\leq k-2.\]
En effet, on déduit de \cite[proposition 4.5.2]{Liu} que le $\cR$-module $\mathrm{Ev}_x(\D_\alpha)$ n'est pas saturé dans $\D_{\rig}(V_x)$ et le module $t^{1-k}\mathrm{Ev}_x(\D_\alpha)$ est saturé. Ceci implique que $\mu_{x,\chi}=\Ev_x(\mu_{X,\chi})$ est la dérivée $(k-1)$-ième d'une distribution sur $\Z_p$ et que $\int_{\Z_p^*}\phi x^{j}\mu_{x,\chi}=0$ si $0\leq j\leq k-2$ . Il faudrait travailler plus pour verifier que $L_{p,\chi}(x,\sigma)$ n'est pas identiquement nulle et la relier  
à la fonction L critique de Bellaïche $\cite{Be}$.
\end{remark}   

\printindex


\begin{thebibliography}{99}
\let\ts=\textsc
\bibitem{Av}\ts{Y. Amice; J. Vélu}: Distributions p-adiques associées aux séries de Hecke. (French) Journées Arithmétiques de Bordeaux (Conf., Univ. Bordeaux, Bordeaux, 1974), pp. 119-131. Asterisque, Nos. 24-25, Soc. Math. France, Paris, 1975. 
\bibitem{AIS}\ts{F. Andreatta; A. Iovita; G. Stevens}: overconvergent Eichler-Shimura isomorphisms. à paraître dans Journal of the Institute of Mathematics of Jussieu
\bibitem{AIS1}\ts{F. Andreatta; A. Iovita; G. Stevens}: Overconvergent modular sheaves and modular forms for GL(2,F). à paraître dans the Israel Journal of Mathematics
\bibitem{AS}\ts{A. Ash; G. Stevens}: $p$-adic deformations of arithmetic cohomology. preprint
\bibitem{Be}\ts{J. Bellaïche}: Critical p-adic L-functions. Invent. Math. \textbf{189} (2012), no. 1, p.1-60.
\bibitem{Be1}\ts{J. Bellaïche}:  Eigenvarieties and $p$-adic L functions. http://people.brandeis.edu/~jbellaic/preprint/preprint.html
\bibitem{BeCh}\ts{J. Bellaïche, G. Chenevier}: Families of Galois representations and Selmer groups. Astérisque \textbf{324} (2009).
\bibitem{B}\ts{L. Berger}: Représentation $p$-adique et équations différentielles. Invent. Math. \textbf{148} (2002), p.219-284
 \bibitem{B1}\ts{L. Berger}: \'Equations différentielles $p$-adiques et $(\varphi,N)$-modules filtrés. Astérisque \textbf{319} (2008).
\bibitem{BC}\ts{L. Berger; P. Colmez}: Familles de représentations de de Rham et monodromie p-adique. Astérisque \textbf{319} (2008).
\bibitem{KB}\ts{K. Buzzard}: Eigenvarieties. In "$L$-functions and Galois representations", 59--120, London Math. Soc. Lecture Note Ser., 320, Cambridge Univ. Press, Cambridge, 2007
\bibitem{Ch}\ts{G. Chenevier}: Une correspondance de Jacquet-Langlands p-adique.
Duke math. journal \textbf{126} no.1, 161-194 (2005).
\bibitem{CC}\ts{F. Cherbonnier; P. Colmez }: Représentations $p$-adiques surconvergentes, Invent. Math. vol. \textbf{133}, p.581-611. 
\bibitem{CC3}\ts{F. Cherbonnier; P. Colmez }: Th\'eorie d'Iwasawa des représentations $p$-adiques d'un corps local.  J.A.M.S vol. \textbf{12} no. 1 (1999) p.241-268.
\bibitem{RC0}\ts{R.Coleman}: Classical and overconvergent modular forms, Invent. Math. vol. \textbf{124} (1996).
\bibitem{RC}\ts{R. Coleman}: $p$-adiques Banach spaces and families of modular forms. Invent. Math. vol. \textbf{127} p.417-479 (1997)
\bibitem{CM}\ts{R. Coleman; B.Mazur}: The eigencurve. Galois representations in arithmetic algebraic geometry (Durham, 1996), 1C113, London Math. Soc. Lecture Note Ser., \textbf{254}, Cambridge Univ. Press, Cambridge, 1998.
\bibitem{PC1}\ts{P. Colmez }: La Conjecture de Birch et
Swinnerton-Dyer p-adique. Ast\'erisque \textbf{294} (2004).
\bibitem{PC2}\ts{P. Colmez}: La série principale unitaire de $\GL_2(\Q_p)$. Ast\'erisque \textbf{330} (2010), p. 213-262.
\bibitem{PC4}\ts{P. Colmez}: Représentations de $\GL_2(\Q_p)$ et $(\varphi,\Gamma)$-modules. Ast\'erisque \textbf{330} (2010), p. 281-509.
\bibitem{DD}\ts{D. Delbourgo}: Elliptic curves and big Galois representations, London Mathematical Society Lecture Note Series, \textbf{356}. Cambridge University Press, Cambridge, (2008).
\bibitem{PD}\ts{P. Deligne}: Formes modulaires et représentations $l$-adiques, Séminaire Bourbaki, 21ème année(1968/69), Exp. No. 355, Springer, Berlin, 1969, p. 139-172.
\bibitem{Fa}\ts{G. Faltings}: Hodge-Tate structures and modular forms. Math. Ann. \textbf{278} (1987), no. 1-4, p.133-149.
\bibitem{FJ}\ts{G. Faltings, B.W. Jordan}: Crystalline cohomology and $\GL(2,\Q)$. Israel Journal of Mathematics \textbf{80} (1995), p.1-66.
\bibitem{Fu}\ts{T. Fukaya}: Coleman power series for $K_2$ and $p$-adic zeta functions of modular forms. Kazuya Kato's fiftieth birthday. Doc. Math. (2003), Extra Vol., 387-442 (electronic).
\bibitem{EM}\ts{M. Emerton}: On the interpolation of systems of eigenvalues attached to automorphic Hecke eigenforms. Invent. Math. \textbf{164} (2006), no. 1, 1-84.
\bibitem{Fo}\ts{J-M. Fontaine}: Arithmétique des représentations galoisiennes $p$-adiques. Ast\'erisque \textbf{295} (2004).
\bibitem{HD}\ts{H. Hida}: Le produit de Pertersson et de Rankin p-adique. Séminaire de théorie des nombres 1988-89,
Progr. Math. \textbf{91} (1990), 87-102. 
\bibitem{HD1}\ts{H. Hida}: Elementary theory of L-functions and Eisenstein series. London Math. Soc. Stud. Texts
26, Cambridge University Press, 1993
\bibitem{UJ}\ts{U. Jannsen}: Continous étale cohomology. Math. Ann. \textbf{280} (1988), 207-245. 
\bibitem{KK}\ts{K. Kato }: p-adic Hodge theory and values of zeta
functions of modular forms. Ast\'erisque \textbf{295} (2004).
\bibitem{Ke}\ts{K. Kedlaya}: A $p$-adic monodromy theorem. Ann. of Math. \textbf{160} (2004), p.93-184.
\bibitem{KJX}\ts{K.S. Kedlaya, J. Pottharst, L. Xiao}: Cohomology of arithmetic families of $(\varphi,\Gamma)$-modules. Journal of American Mathematical Society 27 (2014), p.1043-1115
\bibitem{KL}\ts{K.S. Kedlaya, R. Liu}: On families of $(\varphi,\Gamma)$-modules. Algebra Number Theory 4 (2010), no. 7, p.943-967.
\bibitem{Liu}\ts{R. Liu}: Triangulation of refined families. Preprint.
\bibitem{Ma}\ts{Y. Manin}: Periods of cusp forms, and p-adic Hecke series. (Russian) Mat. Sb. (N.S.) 92(134) (1973), p.378-401.
\bibitem{MS}\ts{B. Mazur; P. Swinnerton-Dyer}: Arithmetic of Weil curves. Invent. Math. \textbf{25} (1974), 1-61. 
\bibitem{MTT}\ts{B. Mazur; J. Tate; J. Teitelbaum}, \textit{On $p$-adic analogues of the conjectures of Birch and Swinnerton-Dyer}. Invent. Math. 84 (1986), no. 1, p.1-48.
\bibitem{AP}\ts{A. Panchiskin}: A new method of constructiong $p$-adic $L$-functions associated with modular forms. Moscow Mathematical Journal Vol. \textbf{2} no.2 (2002).
\bibitem{PR}\ts{B. Perrin-Riou}: Théorie d'Iwasawa des représentations p-adiques sur un corps local, Invent.
Math. \textbf{115} (1994), p.81-149. 
\bibitem{PR1}\ts{B. Perrin-Riou}: Quelques remarques sur la théorie d’Iwasawa des courbes elliptiques, Number
theory for the millennium, III (Urbana, IL, 2000) p.119-147.
\bibitem{SP}\ts{R. Pollack, G. Stevens}: Overconvergent modular symbols and $p$-adic L-functions. Ann. Sci. Ec. Norm. Super. (4) \textbf{44} (2011), no. 1, p.1-42.
\bibitem{SP1}\ts{R. Pollack, G. Stevens}: Critical slope p-adic L-functions. Journal of the London Mathematical Society. \textbf{87} (2013), no. 2, p.428-452.
\bibitem{RR}\ts{R. Rankin}: The scalar product of modular forms. Proc. London Math. Soc. (3) 2, (1952). p.198-217. 
\bibitem{Sc}\ts{A.J. Scholl}: An introduction to Kato's Euler systems. Galois representations in arithmetic algebraic geometry (Durham, 1996), p. 379-460, London Math. Soc. Lecture Note Ser., \textbf{254}, Cambridge Univ. Press, Cambridge, 1998.
\bibitem{St}\ts{G. Stevens}: Rigid analytic modular symbols. Preprint, available on http://math.bu.edu/people/ghs/research.html
\bibitem{St1}\ts{G. Stevens}: Family of overconvergent modular symbols. Unpublished
\bibitem{Se}\ts{J. P. Serre}: Galois cohomology. Translated from the French by Patrick Ion and revised by the author.  Springer Monographs in Mathematics. Springer-Verlag, Berlin, 2002. x+210 pp.
\bibitem{Vi}\ts{M. Vishik}: A non-Archimedean analogue of perturbation theory. (Russian) Dokl. Akad. Nauk SSSR 249 (1979), no. 2, p.267-271.
\bibitem{Sh}\ts{G. Shimura}: On the periods of modular forms. Math. Ann. \textbf{229} (1977), no. 3, p.211-221. 
\bibitem{Wang}\ts{S. Wang}: Le syst\`eme d'Euler de Kato. Journal de Théorie des nombres à Bordeaux (2013) Vol.\textbf{25} (3).
\bibitem{WangI}\ts{S. Wang}: Le système d'Euler de Kato en famille (I).  Commentarii Mathematici Helvetici (2014) Vol. \textbf{89} (4), p.819-865.


\end{thebibliography}
\end{document}